\theoremstyle{definition}
\newtheorem{axiom}{Axiom}
\newtheorem{myth}{Myth}
\newtheorem{fact}{Fact}
\newtheorem{definition}{Definition}[section]
\newtheorem{remark}[definition]{Remark}
\newtheorem{example}[definition]{Example}
\newtheorem{examples}[definition]{Examples}
\theoremstyle{plain}
\newtheorem{theorem}[definition]{Theorem}
\newtheorem{lemma}[definition]{Lemma}
\newtheorem{corollary}[definition]{Corollary}
\newcommand{\bra}{\ensuremath{\big\langle}}
\newcommand{\ket}{\ensuremath{\big\rangle}}
\newcommand{\supp}{\ensuremath{{\rm{supp}}}}
\newcommand{\card}{\ensuremath{{\rm{card}}}}
\newcommand{\st}{\ensuremath{{{\rm{st}}}}}
\newcommand{\rest}{\!\!\ensuremath{\upharpoonright}\!}
\newcommand{\ub}{\mathcal{UB}}
\newcommand{\cof}{\textrm{cof}}
\newlist{thm-enum}{enumerate}{2}
\newlist{pf-enum}{enumerate}{2}
\newlist{def-enum}{enumerate}{2}
\newlist{ex-enum}{enumerate}{2}
\newlist{rmk-enum}{enumerate}{2}
\setlist[thm-enum,1]{label=\textbf{\em (\roman*)},leftmargin=*,labelindent=.1\parindent}
\setlist[thm-enum,2]{label=\textbf{\em (\alph*)},leftmargin=*,labelindent=.1\parindent}
\setlist[pf-enum,1]{label=(\roman*),leftmargin=*,labelindent=.1\parindent}
\setlist[pf-enum,2]{label=(\alph*),leftmargin=*,labelindent=.1\parindent}
\setlist[def-enum,1]{label=\textbf{(\arabic*)},leftmargin=*,labelindent=.15\parindent}
\setlist[def-enum,2]{label=\textbf{(\alph*)},leftmargin=*,labelindent=.1\parindent}
\setlist[ex-enum,1]{label=\textbf{\arabic*.},leftmargin=*,labelindent=.15\parindent}
\setlist[rmk-enum,1]{label=\textbf{\arabic*.},leftmargin=*,labelindent=.15\parindent}
\begin{document}
\title{Completeness of the Leibniz Field and Rigorousness of Infinitesimal Calculus}
\author{James F. Hall \& Todor D. Todorov\\ 
                        Mathematics Department\\                
                        California Polytechnic State University\\
                        San Luis Obispo, California 93407, USA\\																				(james@slohall.com \& ttodorov@calpoly.edu)}
\date{}
\maketitle
\begin{abstract} We present a characterization of the completeness of the field of real numbers in the form of a \emph{collection of ten equivalent statements} borrowed from algebra, real analysis, general topology and non-standard analysis. We also discuss the completeness of non-Archimedean fields and present several examples of such fields. As an application we exploit one of our results to argue that the Leibniz infinitesimal calculus in the $18^\textrm{th}$ century was already a rigorous branch of mathematics -- at least much more rigorous than most contemporary mathematicians prefer to believe. By advocating our particular historical point of view, we hope to provoke a discussion on the importance of mathematical rigor in mathematics and science in general. We believe that our article will be of interest for those readers who teach courses on abstract algebra, real analysis, general topology, logic and the history of mathematics.
\end{abstract}

\emph{Mathematics Subject Classification:} Primary: 03H05; Secondary: 01A50, 03C20, 12J10, 12J15, 12J25, 26A06.

\emph{Key words:} Ordered field, complete field, infinitesimals, infinitesimal calculus, non-standard analysis, valuation field, power series, Hahn field, transfer principle.
\section{Introduction} 

	In Section~\ref{S: Preliminaries: Ordered Rings and Fields} we recall the basic definitions and results about totally ordered fields -- not necessarily Archimedean. The content of this section, although algebraic and elementary in nature, is very rarely part of standard mathematical education. In Section~\ref{S: Completeness of an Ordered Field} we present a long list of definitions of different forms of completeness of an ordered field.  These definitions are not new, but they are usually spread throughout the literature of various branches of mathematics and presented at different levels of accessibility. In Section~\ref{S: Completeness of an Archimedean Field} we present a characterization of the completeness of an Archimedean field -- that is to say, a characterization of the completeness of the reals. This characterization is in the form of a \emph{collection of ten equivalent statements} borrowed from algebra, real analysis, general topology and non-standard analysis (Theorem~\ref{T: Completeness of an Archimedean Field}). Some parts of this \emph{collection} are well-known or can be found in the literature. We, believe however, that this is the first time that the whole collection has appeared together; we also entertain the possibility that this collection is comprehensive. In Section~\ref{S: Completeness of a Non-Archimedean Field} we establish some basic results about the completeness of non-Archimedean fields which cannot be found in a  typical textbook on algebra or analysis. In Section~\ref{S: Examples of Non-Archimedean Fields} we present numerous examples of non-Archimedean fields and discuss their completeness. The main purpose of this section is to emphasize the essential difference in the completeness of Archimedean and non-Archimedean fields. 

   In Section~\ref{S: The Purge of Infinitesimals from Mathematics} we offer a short survey of the history of infinitesimal calculus, written in a \emph{polemic-like style}. One of the characterizations of the completeness of an Archimedean field presented in Theorem~\ref{T: Completeness of an Archimedean Field} is formulated in terms of infinitesimals, and thus has a strong analog in the $18^\textrm{th}$ century infinitesimal calculus. We exploit this fact, along with some older results due to J. Keisler~\cite{jKeislerF}, in order to re-evaluate ``with fresh eyes'' the rigorousness of the infinitesimal calculus. In Section~\ref{S: How Rigorous Was the Leibniz-Euler Calculus} we present a new, and perhaps surprising for many, answer to the question ``How rigorous was the infinitesimal calculus in the $18^\textrm{th}$ century?'' arguing that the \emph{Leibniz-Euler infinitesimal calculus} was, in fact, much more rigorous than most todays mathematicians prefer to believe. It seems perhaps, a little strange that it took more that 200 years to realize how the $18^\textrm{th}$ century mathematicians prefer  to phrase the \emph{completeness} of the reals. But better late (in this case, very late), then never. 

	The article establishes a connection between different topics from algebra, analysis, general topology, foundations and history of mathematics which rarely appear together in the literature. Section~\ref{S: Examples of Non-Archimedean Fields} alone -- with the help perhaps, of Section~\ref{S: Preliminaries: Ordered Rings and Fields} -- might be described as ``the shortest introduction to non-standard analysis ever written'' and for some readers might be an ``eye opener.'' We believe that this article will be of interest for all who teach courses on abstract algebra, real analysis and general topology. Some parts of the article might be accessible even for advanced students under a teacher's supervision and suitable for senior projects on real analysis, general topology or history of mathematics. We hope that the historical point of view which we advocate here might stir dormant mathematical passions resulting in a fruitful discussion on the importance of mathematical rigor to mathematics and science in general. 

\section{Preliminaries: Ordered Rings and Fields}\label{S: Preliminaries: Ordered Rings and Fields}
In this section we recall the main definitions and properties of totally ordered rings and fields (or simply, \emph{ordered rings and fields} for short), which are not necessarily Archimedean. We shortly discuss the properties of  infinitesimal, finite and infinitely large elements of a totally ordered field. For more details, and for the missing proofs, we refer the reader to (Lang~\cite{lang}, Chapter XI),  (van der Waerden~\cite{VanDerWaerden}, Chapter 11) and (Ribenboim~\cite{riben}).

\begin{definition}[Orderable Ring]\label{D: Orderable Ring} Let $\mathbb{K}$ be a ring (field). Then:
\begin{ex-enum}
  \item $\mathbb K$ is called \emph{orderable} if there exists a non-empty set $\mathbb{K}_+ \subset\mathbb{K}$ such that: (a) $0\not\in\mathbb{K}_+$; (b) $\mathbb{K}_+$ is closed under the addition and multiplication in $\mathbb{K}$; (c) For every $x\in\mathbb{K}$ exactly one of the following holds: $x=0,\,  x\in\mathbb{K}_+$ or $-x\in\mathbb{K}_+$.

\item $\mathbb K$ is \emph{formally real} if, for every $n\in\mathbb N$, the equation $\sum_{k=0}^n x_k^2 = 0$  in $\mathbb{K}^n$ admits only the trivial solution $x_1=\dots=x_n=0$.
\end{ex-enum}
\end{definition}

\begin{theorem}\label{L: orderable formally real}
  A field $\mathbb K$ is orderable if and only if $\mathbb K$ is formally real.
\end{theorem}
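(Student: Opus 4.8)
The plan is to prove the two implications separately; the forward direction is routine, while the reverse direction (formally real $\Rightarrow$ orderable) is the substance of the theorem and is essentially the Artin--Schreier theorem. For the forward direction, suppose $\mathbb{K}$ is orderable with positive set $\mathbb{K}_+$. First I would observe that every nonzero square lies in $\mathbb{K}_+$: given $x \neq 0$, trichotomy gives $x \in \mathbb{K}_+$ or $-x \in \mathbb{K}_+$, and closure under multiplication yields $x^2 = (\pm x)^2 \in \mathbb{K}_+$ in either case. Consequently a sum $\sum_{k=0}^n x_k^2$ in which some $x_k \neq 0$ has at least one summand in $\mathbb{K}_+$ and the rest in $\mathbb{K}_+ \cup \{0\}$, so by closure under addition the sum lies in $\mathbb{K}_+$ and is in particular nonzero. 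This is precisely formal reality.

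For the converse, the plan is to build the positive cone by a maximality argument. Let $\Sigma$ denote the set of all finite sums of squares in $\mathbb{K}$; it is closed under addition and multiplication and contains every square together with $0$ and $1$. Formal reality is equivalent to $-1 \notin \Sigma$, since $-1 = \sum_k x_k^2$ would give the nontrivial relation $1^2 + \sum_k x_k^2 = 0$. I would then call a set $P \subseteq \mathbb{K}$ a \emph{preordering} if $P + P \subseteq P$, $P \cdot P \subseteq P$, $x^2 \in P$ for all $x \in \mathbb{K}$, and $-1 \notin P$, so that $\Sigma$ is a preordering. Ordering the preorderings containing $\Sigma$ by inclusion, the union of a chain is again such a preordering (the forbidden element $-1$ lies in no member, hence not in the union), so Zorn's lemma produces a maximal preordering $P \supseteq \Sigma$.

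The crux of the argument, and the step I expect to be the main obstacle, is showing that this maximal $P$ is in fact a total ordering, i.e. that $a \in P$ or $-a \in P$ for every $a$. Assuming $a \notin P$, I would consider $P' = P + aP = \{\, p + aq : p,q \in P \,\}$ and check, using $a^2 \in P$, that $P'$ is closed under addition and multiplication and properly contains $P$ (it contains $a = 0 + a\cdot 1$). Maximality then forces $P'$ to violate the last axiom, so $-1 = p + aq$ for some $p,q \in P$; here $q \neq 0$ (else $-1 = p \in P$), and multiplying through by $q$ and then by the square $(q^{-1})^2 \in P$ gives $-a \in P$. For antisymmetry, if $a \in P \cap (-P)$ with $a \neq 0$ then $a^{-1} = a\,(a^{-1})^2 \in P$ and hence $-1 = (-a)\,a^{-1} \in P$, a contradiction, so $P \cap (-P) = \{0\}$.

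Finally I would set $\mathbb{K}_+ = P \setminus \{0\}$ and verify the three clauses of Definition~\ref{D: Orderable Ring}: $0 \notin \mathbb{K}_+$ is immediate, the trichotomy follows from totality together with $P \cap (-P) = \{0\}$, and closure under addition and multiplication follows from the corresponding closure of $P$, using the absence of zero divisors in a field to rule out the degenerate cases where a sum or product could vanish. This last part is routine once the crux above is settled.
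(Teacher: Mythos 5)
Your proposal is correct, and it is essentially the classical Artin--Schreier argument; note that the paper does not actually prove this theorem but defers entirely to van der Waerden, Chapter 11, where precisely this proof appears, so what you have done differently is to supply the full argument the paper omits. Your forward direction (nonzero squares lie in $\mathbb{K}_+$, hence a nontrivial sum of squares is nonzero) and your reverse direction (preorderings containing the sums of squares $\Sigma$, Zorn's lemma, and the key step that a maximal preordering $P$ is total via $P' = P + aP$ and the identity $-a = (1+p)\,q\,(q^{-1})^2 \in P$) are all sound. One small inaccuracy in your final verification: the absence of zero divisors rules out the degenerate case only for \emph{products}; for \emph{sums}, if $x, y \in P \setminus \{0\}$ and $x + y = 0$, then $y = -x$ gives $x \in P \cap (-P)$, and it is your previously established fact $P \cap (-P) = \{0\}$ (not the field axioms) that yields the contradiction. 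Since you prove that fact explicitly, this is a mis-attribution rather than a gap, and the proof stands as written once that sentence is adjusted.
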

\begin{proof} We refer the reader to (van der Waerden~\cite{VanDerWaerden}, Chapter 11).
\end{proof}

The fields, $\mathbb Q$ and $\mathbb R$ are certainly orderable. In contrast, the field of the complex numbers $\mathbb{C}$ is non-orderable, because the equation $x^2+y^2=0$ has a non-trivial solution, $x=i, y=1$, in $\mathbb{C}$. The finite fields $\mathbb Z_p$ and the fields of the real p-adic numbers $\mathbb Q_p$ are non-orderable for similar reasons (see Ribenboim~\cite{riben} p.144-145).

\begin{definition}[Totally Ordered Rings]\label{D: Totally Ordered Rings} Let $\mathbb K$ be an orderable ring (field) and let $\mathbb{K}_+\subset\mathbb{K}$ be a set that satisfies the properties given above. Then:
   \begin{enumerate}
   \item  We define the relation $<_{\mathbb{K}_+}$ on $\mathbb{K}$ by $x<_{\mathbb{K}_+}y$ if $y-x\in\mathbb{K}_+$. We shall often write simply $<$ instead of $<_{\mathbb{K}_+}$ if the choice of $\mathbb{K}_+$ is clear from the context. Then $(\mathbb{K}, +, \cdot,  <)$, denoted for short by $\mathbb{K}$, is called a \emph{totally ordered ring (field)} or simply a \emph{ordered ring (field)} for short. If $x\in\mathbb{K}$, we define the \emph{absolute value} of $x$ by $|x|=:\max(-x,x)$.

   \item  If $A\subset\mathbb K$, we define the \emph{set of upper bounds} of $A$ by $\ub(A)=:\{x\in\mathbb K : (\forall a\in A)(a\le x)\}$. We denote by $\sup_\mathbb{K}(A)$ or, simply by $\sup(A)$, the \emph{least upper bound} of $A$ (if it exists).

   \item The \emph{cofinality} ${\rm cof}(\mathbb K)$ of $\mathbb K$ is the cardinality of the smallest unbounded subset of $\mathbb K$.

   \item Let $I$ be a directed set (Kelley~\cite{jKelley}, Chapter 2)). A net $f: I\to\mathbb K$ is called  \emph{fundamental or Cauchy} if for every $\varepsilon\in\mathbb K_+$ there exists $h\in I$ such that $|f(i)-f(j)|<\varepsilon$ for all $i, j\in I$ such that $i, j\succcurlyeq h$.

   \item Let $a, b\in\mathbb{K}$ and $a\leq b$. We let $[a, b]=:\{x\in\mathbb{K}: a\leq x\leq b\}$ and  $(a, b)=:\{x\in\mathbb{K}: a< x< b\}$.  A totally ordered ring (field) $\mathbb{K}$  will be always supplied with the order topology -- with the open intervals as basic open sets.

   \item A totally ordered ring (field) $\mathbb{K}$ is called \emph{Archimedean} if for every $x\in\mathbb{K}$, there exists $n\in\mathbb{N}$ such that $|x|\leq n$. If $\mathbb K$ is Archimedean, we also may refer to $\mathbb K(i)$ as \emph{Archimedean}. 
   \end{enumerate}
\end{definition}

\begin{theorem}[Rationals and Irrationals]\label{T: Rationals and Irrationals} 
   Let $\mathbb{K}$ be a totally ordered field. Then:
   \begin{thm-enum}

   \item $\mathbb{K}$ contains a copy of the field of the rational numbers $\mathbb{Q}$ under the order field embedding  $\sigma:\mathbb Q\to\mathbb K$ defined by: $\sigma(0)=:0$, $\sigma(n)=:n\cdot1$, $\sigma(-n)=:-\sigma(n)$ and $\sigma(\frac{m}{k})=:\frac{\sigma(m)}{\sigma(k)}$ for $n\in\mathbb N$ and $m,k\in\mathbb Z$. We shall simply write $\mathbb{Q}\subseteq\mathbb{K}$ for short.  
   \item If $\mathbb{K}\setminus\mathbb{Q}$ is non-empty, then $\mathbb{K}\setminus\mathbb{Q}$ is dense in $\mathbb{K}$ in the sense that for every $a, b\in\mathbb{K}$, such that $a<b$, there exists $x\in\mathbb{K}\setminus\mathbb{Q}$ such that $a<x<b$.
   \item If $\mathbb{K}$ is Archimedean, then $\mathbb{Q}$ is also dense in $\mathbb{K}$ in the sense that for every $a, b\in\mathbb{K}$ such that $a<b$ there exists $q\in\mathbb{Q}$ such that $a<q<b$.
   \end{thm-enum}
\end{theorem}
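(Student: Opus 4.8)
The plan is to treat the three parts in turn, with part \emph{(ii)} being the substantive one.

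For \emph{(i)}, I would first record the standard fact that an ordered field has characteristic zero: since $1=1^2$ is a square it lies in $\mathbb K_+$, so $0<1<1+1<\cdots$, whence the elements $\sigma(n)=n\cdot 1$ are distinct and positive for $n\in\mathbb N$. This makes $\sigma$ well defined on $\mathbb Z$ (nonzero integers map to invertible elements) and hence on $\mathbb Q$; that $\sigma$ is an injective ring homomorphism is then immediate from the field axioms, and order-preservation follows because $\sigma$ sends positive rationals to sums and quotients of positive elements of $\mathbb K$. I regard this part as routine bookkeeping.

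For \emph{(ii)}, the main obstacle is that in a non-Archimedean field one cannot shrink a single irrational $\xi$ into an arbitrary interval by rational scaling: if $\xi$ is infinitely large, every nonzero rational multiple of it is still infinitely large. I would sidestep this with a \emph{bounded gadget}. Fix $\xi\in\mathbb K\setminus\mathbb Q$; then $|\xi|$ is irrational and positive, and $\theta=\frac{|\xi|}{1+|\xi|}$ is an irrational element of $(0,1)$: the bounds follow from $0<|\xi|<1+|\xi|$, and if $\theta$ were rational then solving yields $|\xi|=\frac{\theta}{1-\theta}\in\mathbb Q$, a contradiction. The crucial feature is that $c+\rho\,\theta$ is irrational whenever $c,\rho\in\mathbb Q$ with $\rho\ne 0$, while remaining within $\rho$ of $c$. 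To finish, I argue by contradiction: given $a<b$, suppose $(a,b)\subseteq\mathbb Q$. Since $\mathbb K$ has characteristic zero, $(a,b)$ contains its successive midpoints and is therefore infinite, so I may choose rationals $p<q$ in $(a,b)$. Putting $c=\frac{p+q}{2}$ and $\rho=\frac{q-p}{2}$ (both rational, $\rho>0$), the point $c+\rho\,\theta$ is irrational and satisfies $p<c<c+\rho\,\theta<c+\rho=q$, so it lies in $(a,b)$, contradicting $(a,b)\subseteq\mathbb Q$. Hence $(a,b)$ meets $\mathbb K\setminus\mathbb Q$.

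For \emph{(iii)}, I would run the classical Archimedean argument. Given $a<b$, apply the Archimedean property to $\frac{2}{b-a}$ to find $n\in\mathbb N$ with $\frac1n<b-a$. The set $\{k\in\mathbb Z:\sigma(k)>na\}$ is nonempty and bounded below in $\mathbb Z$ (again by the Archimedean property, applied to $na$), so it has a least element $k_0$; then $\frac{k_0}{n}>a$, while minimality gives $\frac{k_0-1}{n}\le a$, so $\frac{k_0}{n}\le a+\frac1n<b$. Thus $q=\frac{k_0}{n}\in\mathbb Q\cap(a,b)$. I expect the only genuine difficulty to be in \emph{(ii)}: isolating a bounded irrational such as $\theta$, and realizing that the argument must be phrased so that the center of the target interval is rational (achieved here by extracting the rational endpoints $p,q$), since otherwise a small irrational perturbation of an irrational point need not remain irrational. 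Parts \emph{(i)} and \emph{(iii)} are standard.
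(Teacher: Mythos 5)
Your proof is correct. Note that the paper itself offers no proof of this theorem: it is stated without one, falling under the blanket remark at the start of Section~\ref{S: Preliminaries: Ordered Rings and Fields} deferring missing proofs to Lang, van der Waerden and Ribenboim, so there is no in-paper argument to compare against. Judged on its own merits, your treatment of \emph{(ii)} handles exactly the trap a blind attempt usually falls into: in a non-Archimedean $\mathbb K$ the rationals are \emph{not} dense (that is precisely why \emph{(iii)} carries the Archimedean hypothesis, cf.\ Example~\ref{Ex: Field of Rational Functions}, where no rational lies between two finite elements differing by an infinitesimal), so one cannot pick rational points near $a$ and $b$ directly, nor bound $b-a$ below by a rational when it is infinitesimal. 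Your contradiction device -- assuming $(a,b)\subseteq\mathbb Q$, harvesting two rational midpoints $p<q$ from that very assumption, and then planting the bounded irrational $c+\rho\,\theta$ with $\theta=|\xi|/(1+|\xi|)\in(0,1)$ inside $(p,q)$ -- circumvents both obstacles, since under the hypothesis the interval supplies its own rational center and rational radius. The auxiliary facts all check out: $\theta$ is irrational because $\theta\in\mathbb Q$ would force $|\xi|=\theta/(1-\theta)\in\mathbb Q$; rational affine images of $\theta$ with nonzero rational slope stay irrational; and the midpoint argument only uses characteristic zero, which your part \emph{(i)} establishes. Parts \emph{(i)} and \emph{(iii)} are the standard arguments (in \emph{(iii)}, applying the paper's Definition~\ref{D: Totally Ordered Rings}(6) to $2/(b-a)$ and then taking the least $k_0\in\mathbb Z$ with $\sigma(k_0)>na$ is exactly right, including the well-ordering justification via boundedness below). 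No gaps.
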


	 The embedding $\mathbb{Q}\subseteq\mathbb{K}$ is important for the next definition. 

\begin{definition}[Infinitesimals, etc.]\label{D: Infinitesimals, etc.}
  Let $\mathbb{K}$ be a totally ordered field. We define:
   \begin{align*}
   \mathcal{I}(\mathbb{K})&=:\{x\in\mathbb{K} : |x|< 1/n \text{ for all } n\in\mathbb N\},\\
      \mathcal{F}(\mathbb{K})&=:\{x \in\mathbb{K} : |x|\le n  \text{ for some } n\in\mathbb N\},\\
      \mathcal{L}(\mathbb{K})&=:\{x \in\mathbb{K} : n<|x|  \text{ for all } n\in\mathbb N\}.
   \end{align*}

  The elements in $\mathcal I(\mathbb K), \mathcal F(\mathbb K), \textrm{ and } \mathcal L(\mathbb K)$ are referred to as \emph{infinitesimal (infinitely small), finite and infinitely large}, respectively. We sometimes write $x\approx0$ if $x\in\mathcal I(\mathbb K)$ and $x\approx y$ if $x-y\approx 0$, in which case we say that $x$ is \emph{infinitesimally close} to $y$. If $S\subseteq\mathbb K$, we define the \emph{monad} of $S$ in $\mathbb K$ by 
   \[
   \mu(S)=\{s+dx : s\in S, dx\in\mathcal I(\mathbb K)\}.
   \]
\end{definition}

	The next result follows directly from the above definition.

\begin{lemma} Let $\mathbb K$ be a totally ordered ring. Then: (a) $\mathcal I(\mathbb K)\subset \mathcal F(\mathbb K)$; (b) $\mathbb K=\mathcal F(\mathbb K)\cup\mathcal L(\mathbb K)$;
(c) $\mathcal F(\mathbb K)\cap\mathcal L(\mathbb K)=\varnothing$. If $\mathbb{K}$ is a field, then: (d) $x\in\mathcal I(\mathbb K)$ if and only if $\frac{1}{x}\in\mathcal L(\mathbb K)$ for every non-zero $x\in \mathbb K$.
\end{lemma}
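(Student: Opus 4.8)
The plan is to read off all four assertions directly from the defining conditions, using only the total-order axioms for (a)--(c) and invoking the field structure for (d). The one useful preliminary observation is that the clause ``$|x|<1/n$'' occurring in the definition of $\mathcal I(\mathbb K)$ is equivalent to ``$n\,|x|<1$'': multiplying a strict inequality by the positive element $n$ preserves it, and dividing by $n$ reverses the passage. This reformulation both makes the definition of $\mathcal I(\mathbb K)$ meaningful in a ring (where $1/n$ need not exist) and is the form I will use below.

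For (a), I would evaluate the defining condition of $\mathcal I(\mathbb K)$ at $n=1$: if $x\in\mathcal I(\mathbb K)$ then $|x|<1$, hence $|x|\le 1$, so $x\in\mathcal F(\mathbb K)$ with witness $n=1$; the element $1$ itself (for which $2\,|1|=2\not<1$) shows the inclusion is proper. For (b) and (c) the point is that the conditions defining $\mathcal F(\mathbb K)$ (``$|x|\le n$ for some $n$'') and $\mathcal L(\mathbb K)$ (``$n<|x|$ for all $n$'') are logical negations of one another. Indeed, by trichotomy, for each fixed $x$ and each $n$ exactly one of $|x|\le n$ and $n<|x|$ holds; therefore $x\notin\mathcal F(\mathbb K)$ is equivalent to $x\in\mathcal L(\mathbb K)$, which yields $\mathbb K=\mathcal F(\mathbb K)\cup\mathcal L(\mathbb K)$ and $\mathcal F(\mathbb K)\cap\mathcal L(\mathbb K)=\varnothing$ at once. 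For (c) I can also argue directly: were $x$ in both sets, choosing the witness $n$ from membership in $\mathcal F(\mathbb K)$ and applying the $\mathcal L(\mathbb K)$-condition at that same $n$ would give $n<|x|\le n$, a contradiction.

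For (d), now assuming $\mathbb K$ is a field and $x\neq 0$, I would use that $1/x$ exists and that $|1/x|=1/|x|$. The reformulation above then gives the chain of equivalences: $x\in\mathcal I(\mathbb K)$ iff $n\,|x|<1$ for all $n$ iff $n<1/|x|=|1/x|$ for all $n$ iff $1/x\in\mathcal L(\mathbb K)$, where passage between the second and third statements uses multiplication, respectively division, by the positive element $|1/x|$, respectively $|x|$.

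There is no deep obstacle here; the content is entirely bookkeeping. The only points demanding care are the elementary order facts that $a>0$ implies $1/a>0$, that multiplying a strict inequality by a positive element preserves it, and that $|1/x|=1/|x|$ --- all of which follow from the ordered-field axioms (closure of the positive cone under multiplication) recalled in Section~\ref{S: Preliminaries: Ordered Rings and Fields}. The subtlest step is merely tracking the quantifiers correctly in (b), where the equivalence of ``not finite'' with ``infinitely large'' rests on trichotomy holding at every $n$ simultaneously.
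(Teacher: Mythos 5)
Your proof is correct and matches the paper's intent exactly: the paper offers no proof at all, stating only that the lemma ``follows directly from the above definition,'' and your argument is precisely that routine unpacking (negation of ``$|x|\le n$ for some $n$'' versus ``$n<|x|$ for all $n$'' via trichotomy for (b)--(c), inversion of positive inequalities for (d)). Your reformulation of ``$|x|<1/n$'' as ``$n\,|x|<1$'' is a sensible extra touch, since the paper states the lemma for rings while Definition~\ref{D: Infinitesimals, etc.} is phrased for fields where $1/n$ exists.
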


\begin{theorem}\label{T: Maximal Ideal}
  Let $\mathbb{K}$ be a totally ordered field. Then $\mathcal{F}(\mathbb{K})$ is an Archimedean ring and $\mathcal{I}(\mathbb{K})$ is a maximal ideal of $\mathcal{F}(\mathbb{K})$. Moreover, $\mathcal{I}(\mathbb{K})$ is a \emph{convex ideal} in the sense that $a\in\mathcal{F}(\mathbb{K})$ and $|a|\le b\in\mathcal{I}(\mathbb{K})$ implies $a\in\mathcal{I}(\mathbb{K})$. Consequently $\mathcal{F}(\mathbb{K})/\mathcal{I}(\mathbb{K})$ is a totally ordered Archimedean field.
\end{theorem}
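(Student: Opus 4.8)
The plan is to verify the ring and ideal structure by direct estimates with the triangle inequality and the multiplicativity of the absolute value, to obtain maximality from an inversion argument, and finally to transport the order to the quotient using the convexity of $\mathcal I(\mathbb K)$.

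First I would check that $\mathcal F(\mathbb K)$ is a subring of $\mathbb K$: if $|x|\le m$ and $|y|\le n$ with $m,n\in\mathbb N$, then $|x\pm y|\le m+n$ and $|xy|=|x||y|\le mn$, so $\mathcal F(\mathbb K)$ is closed under the ring operations and contains $0$ and $1$. That it is Archimedean is immediate, since the defining condition of $\mathcal F(\mathbb K)$ says precisely that every element is bounded in absolute value by some natural number. Next I would show that $\mathcal I(\mathbb K)$ is an ideal of $\mathcal F(\mathbb K)$: closure under addition follows by halving, since $|x|<1/(2n)$ and $|y|<1/(2n)$ give $|x+y|<1/n$; and absorption follows because $a\in\mathcal F(\mathbb K)$ with $|a|\le m$ and $x\in\mathcal I(\mathbb K)$ yield $|ax|\le m|x|<1/n$ for all $n$. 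Convexity is equally direct: if $a\in\mathcal F(\mathbb K)$ and $|a|\le b\in\mathcal I(\mathbb K)$, then $b\ge 0$ forces $b=|b|<1/n$, hence $|a|<1/n$ for every $n$ and $a\in\mathcal I(\mathbb K)$.

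For maximality I would argue that every element of $\mathcal F(\mathbb K)$ lying outside $\mathcal I(\mathbb K)$ is a unit of $\mathcal F(\mathbb K)$. Indeed, if $a\in\mathcal F(\mathbb K)\setminus\mathcal I(\mathbb K)$ then $a$ fails to be infinitesimal, so $|a|\ge 1/n_0$ for some $n_0\in\mathbb N$; consequently $|1/a|\le n_0$, i.e. $1/a\in\mathcal F(\mathbb K)$. Thus any ideal properly containing $\mathcal I(\mathbb K)$ contains such an $a$ together with its inverse, hence contains $1$ and equals $\mathcal F(\mathbb K)$; this is exactly the maximality of $\mathcal I(\mathbb K)$.

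Finally, since $\mathcal I(\mathbb K)$ is a maximal ideal of the commutative ring $\mathcal F(\mathbb K)$, the quotient $\mathcal F(\mathbb K)/\mathcal I(\mathbb K)$ is a field by the standard quotient-by-a-maximal-ideal theorem. To make it an ordered field I would (writing $[x]$ for the coset of $x$) declare $[x]$ positive when it has a strictly positive, non-infinitesimal representative, and use convexity to see that this is well defined: if $x>0$, $x\notin\mathcal I(\mathbb K)$, and $x-y\in\mathcal I(\mathbb K)$, then $y>0$ as well, since otherwise $y\le 0$ would give $x-y\ge x\ge 1/n_0$, contradicting $x-y\in\mathcal I(\mathbb K)$. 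Trichotomy and closure of the positive cone under addition and multiplication then descend from the corresponding properties in $\mathbb K$, and the Archimedean property is inherited outright, for $|x|\le n$ yields $|[x]|\le n\,[1]$. I expect the only genuinely delicate point to be the well-definedness of the order on the quotient; this is precisely where the convexity of $\mathcal I(\mathbb K)$ is forced into the argument, whereas every remaining claim reduces to a one-line estimate or to the cited algebraic fact.
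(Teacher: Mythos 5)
Your proposal is correct, and there is nothing to compare it against: the paper states this theorem without proof, delegating it (like the other facts in Section 2) to the references (Lang, van der Waerden, Ribenboim). Your argument is exactly the standard one found there -- triangle-inequality estimates for the subring and ideal properties, the observation that any finite non-infinitesimal $a$ satisfies $|a|\ge 1/n_0$ and hence $|1/a|\le n_0$ (so every element of $\mathcal{F}(\mathbb{K})\setminus\mathcal{I}(\mathbb{K})$ is a unit, giving maximality), and convexity to make the positive cone well defined on the quotient -- and you correctly identify the well-definedness of the induced order as the one point where convexity, rather than mere ideal-hood, is genuinely needed.
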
     

Here is a characterization of the Archimedean property ``in terms of infinitesimals.''

\begin{theorem}[Archimedean Property]\label{T: Archimedean Property}
  Let $\mathbb{K}$ be a totally ordered ring. Then the following are equivalent: (i) $\mathbb{K}$ is Archimedean. (ii) $\mathcal{F}(\mathbb{K})=\mathbb{K}$. (iii) $\mathcal L(\mathbb K)=\varnothing$. If $\mathbb{K}$ is a field, then each of the above is also equivalent to $\mathcal I(\mathbb K)=\{0\}$.  
\end{theorem}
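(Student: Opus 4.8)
The plan is to prove the equivalences by a direct appeal to the definitions together with the preceding lemma, treating (i) $\Leftrightarrow$ (ii) $\Leftrightarrow$ (iii) as a cycle and then grafting on the field statement $\mathcal{I}(\mathbb{K})=\{0\}$ separately.

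First I would dispatch (i) $\Leftrightarrow$ (ii), which is essentially a matter of unwinding notation. The Archimedean condition asserts that for each $x\in\mathbb{K}$ there is some $n\in\mathbb{N}$ with $|x|\le n$; but this is precisely the membership criterion defining $\mathcal{F}(\mathbb{K})$. Hence (i) says exactly that $\mathbb{K}\subseteq\mathcal{F}(\mathbb{K})$, and since the reverse inclusion $\mathcal{F}(\mathbb{K})\subseteq\mathbb{K}$ is automatic, (i) is equivalent to $\mathcal{F}(\mathbb{K})=\mathbb{K}$.

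Next, for (ii) $\Leftrightarrow$ (iii), I would invoke parts (b) and (c) of the preceding lemma, which state $\mathbb{K}=\mathcal{F}(\mathbb{K})\cup\mathcal{L}(\mathbb{K})$ and $\mathcal{F}(\mathbb{K})\cap\mathcal{L}(\mathbb{K})=\varnothing$. Thus $\mathbb{K}$ is the disjoint union of its finite and infinitely large parts, so $\mathcal{F}(\mathbb{K})$ exhausts $\mathbb{K}$ if and only if nothing is left over, i.e.\ if and only if $\mathcal{L}(\mathbb{K})=\varnothing$. This closes the cycle among (i)--(iii) with no hypotheses beyond $\mathbb{K}$ being a ring.

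Finally, assuming $\mathbb{K}$ is a field, I would prove (iii) $\Leftrightarrow \mathcal{I}(\mathbb{K})=\{0\}$ using part (d) of the lemma, namely that for nonzero $x$ one has $x\in\mathcal{I}(\mathbb{K})$ if and only if $1/x\in\mathcal{L}(\mathbb{K})$. Since $0\in\mathcal{I}(\mathbb{K})$ always, the assertion $\mathcal{I}(\mathbb{K})=\{0\}$ means there is no nonzero infinitesimal; by (d) this is equivalent to there being no reciprocal lying in $\mathcal{L}(\mathbb{K})$, hence (as every nonzero element of a field is itself a reciprocal) to $\mathcal{L}(\mathbb{K})=\varnothing$. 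I expect the only point demanding care --- and the reason this last equivalence is stated for fields rather than rings --- to be the use of reciprocals in (d): without multiplicative inverses one cannot pass between an infinitesimal and an infinitely large element, so the equivalence $\mathcal{I}(\mathbb{K})=\{0\}$ genuinely requires the field structure and should be segregated from the ring-level cycle (i)--(iii).
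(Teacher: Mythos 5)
Your proof is correct and matches the intended argument: the paper states this theorem without proof (deferring, like the other omitted proofs of Section~2, to its references), and the expected justification is exactly your definitional unwinding of $\mathcal{F}(\mathbb{K})$ together with parts (b), (c) and (d) of the preceding lemma. Your handling of the field case --- noting that $0\notin\mathcal{L}(\mathbb{K})$, that every nonzero field element is itself a reciprocal, and that part (d) is what converts a nonzero infinitesimal into an infinitely large element and back --- correctly isolates the one place where the field hypothesis is genuinely needed.
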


  Notice that Archimedean rings (which are not fields) might have non-zero infinitesimals. Indeed,  if $\mathbb K$ is a non-Archimedean field, then $\mathcal F(\mathbb K)$ is always an Archimedean ring, but it has non-zero infinitesimals (see Example~\ref{Ex: Field of Rational Functions} below).

\begin{definition}[Ordered Valuation Fields]\label{D: Ordered Valuation Fields} Let $\mathbb K$ be a totally ordered field. Then:
 \begin{rmk-enum}
\item The mapping $v:\mathbb K\to \mathbb R\cup\{\infty\}$ is called a \emph{non-Archimedean Krull valuation} on $\mathbb K$ if, for every $x,y\in\mathbb K$ the properties:
\begin{description}
    \item[(a)] $v(x)=\infty$ if and only if $x=0$,
    \item[(b)] $v(xy)=v(x)+v(y)$ (\emph{Logarithmic property}),
    \item[(c)] $v(x + y)\ge\min\{v(x), v(y)\}$ (\emph{Non-Archimedean property}),
    \item[(d)] $|x| < |y|$ implies $v(x) \ge v(y)$ (\emph{Convexity property}),
  \end{description}
hold.
  The structure $(\mathbb K,v)$, denoted as $\mathbb K$ for short, is called an \emph{ordered valuation field}.
 
	\item We define the \textbf{valuation norm} $||\cdot||_v: \mathbb K\to\mathbb R$ by the formula $||x||_v=e^{-v(x)}$ with the understanding that $e^{-\infty}=0$.  Also, the formula  $d_v(x, y)=||x-y||_v$ defines the \emph{valuation metric} formula $d_v: \mathbb K\times\mathbb K\to\mathbb R$. We denote by $(\mathbb{K}, d_v)$ the \emph{associated metric space}.

\end{rmk-enum}
\end{definition}

\begin{example}[Field of Rational Functions]\label{Ex: Field of Rational Functions} Let $\mathbb K$ be an ordered field (Archimedean or not) and $\mathbb K[t]$ denote for the ring of polynomials in one variable with coefficients in $\mathbb K$. We supply the field 
\[
\mathbb K(t)=:\left\{P(t)/Q(t) : P, Q\in\mathbb K[t]\emph{ and }Q\not\equiv0\right\},
\] 
of \emph{rational functions} with ordering by:  $f< g$ in $\mathbb{K}(t)$ if there exists $n\in\mathbb{N}$ such that $g(t)-f(t)>0$ in $\mathbb{K}$ for all $t\in (0, 1/n)$. Notice that $\mathbb{K}(t)$ is a non-Archimedean field: $t, t^2, t+t^2$, etc. are positive infinitesimals, $1+t, 2+t^2, 3+t+t^2$, etc. are finite, but non-infinitesimal, and $1/t, 1/t^2, 1/(t+t^2)$, etc. are infinitely large elements of  $\mathbb{K}(t)$. Also, $\mathbb{K}(t)$ is a valuation field with valuation group $\mathbb{Z}$ and valuation $v:\mathbb{K}(t)\to \mathbb Z\cup\{\infty\}$, defined as follows: If $P\in\mathbb{K}[t]$ is a non-zero polynomial, then $v(P)$ is the lowest power of $t$ in $P$ and if $Q$ is another non-zero polynomial, then $v(P/Q)=v(P)-v(Q)$.   
\end{example}

\section{Completeness of an Ordered Field}\label{S: Completeness of an Ordered Field}
  
We provide a collection of definitions of several different forms of completeness of a totally ordered field -- not necessarily Archimedean. The relations between these different forms of completeness will be discussed in the next two sections. 

\begin{definition}[Completeness of a Totally Ordered Field]\label{D: Completeness of a Totally Ordered Field}
  Let $\mathbb{K}$ be a totally ordered field.
  \begin{ex-enum}
  \item If $\kappa$ is an uncountable cardinal, then $\mathbb{K}$ is called \emph{Cantor $\kappa$-complete} if every family $\{[a_\gamma,b_\gamma]\}_{\gamma\in \Gamma}$ of fewer than $\kappa$ closed bounded intervals in $\mathbb{K}$ with the finite intersection property (F.I.P.) has a non-empty intersection, $\bigcap_{\gamma\in \Gamma} [a_\gamma,b_\gamma]\neq \varnothing$.

   \item Let $^*\mathbb K$ be a non-standard extension of $\mathbb K$. Let $\mathcal F(^*\mathbb K)$ and $\mathcal I(^*\mathbb K)$ denote the sets of finite and infinitesimal elements in $^*\mathbb K$, respectively (see Definition~\ref{D: Infinitesimals, etc.}). Then we say that $\mathbb{K}$ is \emph{Leibniz complete} if every $x\in\mathcal{F}(^*\mathbb{K})$ can be presented uniquely in the form $x=r+dx$ for some $r\in\mathbb K$ and some $dx\in\mathcal{I}(^*\mathbb{K})$.  For the concept of \emph{non-standard extension of a field} we refer the reader to many of the texts on non-standard analysis, e.g. Davis~\cite{davis} or Lindstr\o m~\cite{lindstrom}. A very short definition of $^*\mathbb K$ appears also in Section~\ref{S: Examples of Non-Archimedean Fields}, Example~5, of this article. 

   \item $\mathbb K$ is \emph{Heine-Borel complete} if a subset $A\subseteq \mathbb K$ is compact if and only if $A$ is closed and bounded.

   \item We say that $\mathbb{K}$ is \emph{monotone complete} if every bounded strictly increasing sequence is convergent.

   \item We say that $\mathbb{K}$ is \emph{Cantor complete} if every nested sequence of bounded closed intervals in $\mathbb{K}$ has a non-empty intersection (that is to say that $\mathbb K$ is Cantor $\aleph_1$-complete, where $\aleph_1$ is the successor of $\aleph_0=\card(\mathbb{N})$).
 
   \item We say that $\mathbb{K}$ is \emph{Weierstrass complete} if every bounded sequence has a convergent subsequence.

   \item We say that $\mathbb{K}$ is \emph{Bolzano complete} if every bounded infinite set has a cluster point.

   \item $\mathbb{K}$ is \emph{Cauchy complete} if every fundamental $I$-net in $\mathbb K$ is convergent, where $I$ is an index set with $\card(I)={\rm cof}(\mathbb K)$. We say that $\mathbb{K}$ is simply \emph{sequentially complete} if every fundamental (Cauchy) sequence in $\mathbb K$ converges (regardless of whether or not $\cof(\mathbb K)=\aleph_0$; see Definition~\ref{D: Totally Ordered Rings}).

   \item $\mathbb{K}$ is \emph{Dedekind complete} (or \emph{order complete}) if every non-empty subset of $\mathbb{K}$ that is bounded from above has a supremum.
 
   \item Let $\mathbb{K}$ be Archimedean. Then $\mathbb{K}$ is \emph{Hilbert complete} if $\mathbb{K}$ is a maximal Archimedean field in the sense that $\mathbb K$ has no proper totally ordered Archimedean field extension.

   \item  If $\kappa$ is an infinite cardinal, $\mathbb K$ is called \emph{algebraically $\kappa$-saturated} if every family $\{(a_\gamma,b_\gamma)\}_{\gamma\in \Gamma}$ of fewer than $\kappa$ open intervals in $\mathbb K$ with the F.I.P. has a non-empty intersection, $\bigcap_{\gamma\in \Gamma} (a_\gamma,b_\gamma)\neq \emptyset$. If $\mathbb{K}$ is algebraically $\aleph_1$-saturated -- i.e. every nested sequence of open intervals has a non-empty intersection -- then we simply say that $\mathbb{K}$ is \emph{algebraically saturated}.

\item A metric space is called\, \emph{spherically complete} if every
nested sequence of closed balls has
nonempty intersection. In particular, an ordered valuation field $(\mathbb K, v)$ is \emph{spherically complete} if the associated metric space $(\mathbb K, d_v)$ is spherically complete (Definition~\ref{D: Ordered Valuation Fields}).
  \end{ex-enum}
\end{definition}
\begin{remark}[Terminology]\label{R: Terminology} Here are some remarks about the above terminology:
  
\begin{itemize}
  \item \emph{Leibniz completeness}, listed as number 2 in Definition~\ref{D: Completeness of a Totally Ordered Field} above, appears in the early Leibniz-Euler Infinitesimal Calculus as the statement that ``every finite number is infinitesimally close to a unique usual quantity.'' Here the ``usual quantities'' are what we now refer to as the real numbers and $\mathbb{K}$ in the definition above should be identified with the set of the reals $\mathbb{R}$. We will sometimes express the Leibniz completeness as $\mathcal{F}(^*\mathbb{K})=\mu(\mathbb{K})$ (Definition~\ref{D: Infinitesimals, etc.}) which is equivalent to $\mathcal{F}(^*\mathbb{K})/\mathcal{I}(^*\mathbb{K})=\mathbb{K}$ (Theorem~\ref{T: Maximal Ideal}).

	\item \emph{Cantor $\kappa$-completeness}, monotone completeness, Weierstrass completeness, Bolzano completeness and Heine-Borel completeness typically appear in real analysis as ``theorems'' or ``important principles'' rather than as forms of completeness; however, in non-standard analysis, Cantor $\kappa$-completeness takes a much more important role along with the concept of algebraic saturation.

	 \item \emph{Cauchy completeness}, listed as number 7 above, is equivalent to the property:                                        $\mathbb K$ does not have a proper ordered field extension $\mathbb L$ such that $\mathbb K$ is dense in $\mathbb L$. The Cauchy completeness is commonly known as \emph{sequential completeness} in the particular case of Archimedean fields (and metric spaces), where $I=\mathbb N$. It has also been used in constructions of the real numbers: Cantor's construction using fundamental (Cauchy) sequences (see Hewitt \& Stromberg~\cite{hewitt} and  O'Connor~\cite{oconnor} and also Borovik \& Katz~\cite{BorovikKatz}).

  \item \emph{Dedekind completeness}, listed as number 8 above, was introduced by Dedekind (independently from many others, see O'Connor~\cite{oconnor}) at the end of the $19^\textrm{th}$ century. From the point of view of modern mathematics, Dedekind proved the consistency of the axioms of the real numbers by constructing his field of Dedekind cuts, which is an example of a Dedekind complete totally ordered field.
  
  \item \emph{Hilbert completeness}, listed as number 9 above, was originally introduced by Hilbert in 1900 with his axiomatic definition of the real numbers (see Hilbert~\cite{hilbert} and O'Connor~\cite{oconnor}).
  \end{itemize}
\end{remark}
	To the end of this section we present some commonly known facts about the Dedekind completeness (without or with very short proofs). 
\begin{theorem}[Existence of Dedekind Fields]\label{T: Existence of Dedekind Fields}
  There exists a Dedekind complete field.
\end{theorem}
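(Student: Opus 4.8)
The plan is to construct a Dedekind complete field explicitly, following the classical construction by cuts of the rational numbers $\mathbb{Q}$ alluded to in Remark~\ref{R: Terminology}. A \emph{Dedekind cut} is a subset $A\subsetneq\mathbb{Q}$ that is nonempty, bounded above, downward closed (so $x\in A$ and $y<x$ imply $y\in A$), and possesses no largest element. Let $\mathbb{R}$ denote the set of all such cuts, ordered by inclusion: $A\le B$ if and only if $A\subseteq B$. I would equip $\mathbb{R}$ with field operations, taking $A+B=:\{a+b : a\in A,\ b\in B\}$ for addition, representing $0$ by the cut $\{q\in\mathbb{Q} : q<0\}$, and defining $-A$ from the rationals lying strictly below every lower bound of $\mathbb{Q}\setminus A$ (with care at the boundary rational). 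Multiplication is defined first for cuts representing nonnegative numbers, by collecting all products of nonnegative members of $A$ and $B$ together with every negative rational, and then extended to arbitrary cuts by the usual sign rules.

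Next I would verify that $(\mathbb{R},+,\cdot,\le)$ satisfies all the axioms of a totally ordered field. The order axioms and the additive abelian group structure follow routinely from the set-theoretic definitions, and the embedding $q\mapsto\{p\in\mathbb{Q} : p<q\}$ exhibits $\mathbb{Q}$ inside $\mathbb{R}$. The delicate points are the well-definedness and associativity of multiplication and the existence of multiplicative inverses: for a cut $A$ representing a positive number one builds $A^{-1}$ from the reciprocals of the positive rationals bounding $\mathbb{Q}\setminus A$ away from $0$, once more requiring careful treatment of boundary rationals so that the resulting set is genuinely a cut with no maximum.

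Finally I would establish Dedekind completeness directly, which is where the cut construction pays off. Given a nonempty family $\mathcal{A}\subseteq\mathbb{R}$ bounded above by a cut $C$, put $S=:\bigcup_{A\in\mathcal{A}}A$. One checks at once that $S$ is nonempty, downward closed, and contained in $C$, hence bounded above; a short argument shows $S$ has no largest element, so $S\in\mathbb{R}$. By construction $A\subseteq S$ for every $A\in\mathcal{A}$, while $S\subseteq B$ for every upper bound $B$ of $\mathcal{A}$; therefore $S=\sup_{\mathbb{R}}(\mathcal{A})$, and $\mathbb{R}$ is Dedekind complete.

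The main obstacle is not the completeness argument, which is essentially immediate once cuts are in hand, but the bookkeeping needed to verify the \emph{field} axioms --- above all, defining multiplication and multiplicative inverses uniformly across the sign cases and checking distributivity. An alternative and equivalent route is to complete $\mathbb{Q}$ through equivalence classes of fundamental (Cauchy) sequences, as in Remark~\ref{R: Terminology}; there the field axioms come out more cleanly, but Dedekind completeness must then be proved rather than read off the construction. I would favor the cut construction precisely because it renders the asserted property transparent.
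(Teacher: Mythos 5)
Your proposal is correct and takes essentially the same route as the paper: the paper's ``proof'' consists entirely of citations to the classical constructions --- Dedekind's cuts (Rudin) and Cantor's fundamental sequences (Hewitt \& Stromberg) --- and you carry out the first of these explicitly, including the one step that actually delivers Dedekind completeness (the supremum of a bounded family as the union of its cuts), while your final paragraph even identifies the Cauchy-sequence construction as the alternative the paper also cites. One small caution: your verbal description of $-A$ (``rationals lying strictly below every lower bound of $\mathbb{Q}\setminus A$'') is garbled as literally stated, since every element of $A$ is such a lower bound and the described set would be empty --- the standard definition is $-A =: \{x\in\mathbb{Q} : (\exists r\in\mathbb{Q},\ r>0)(-x-r\notin A)\}$ --- but as you explicitly flag the boundary bookkeeping there, this is a slip of wording rather than a gap in the argument.
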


\begin{proof} For the classical constructions of such fields due to Dedekind and Cantor, we refer the reader to  Rudin~\cite{poma} and Hewitt \& Stromberg~\cite{hewitt}, respectively. For a more recent proof of the existence of a Dedekind complete field (involving the axiom of choice) we refer to Banaschewski~\cite{bana} and for a non-standard proof of the same result we refer to Hall \& Todorov~\cite{HallTodDedekind11}.
\end{proof}


\begin{theorem}[Embedding]\label{T: Embedding}
  Let $\mathbb{A}$ be an Archimedean field and $\mathbb{R}$ be a Dedekind complete field. For every  $\alpha\in\mathbb{A}$ we let $C_\alpha=:\{q\in\mathbb{Q}: q<\alpha\}$. Then for every $\alpha,\beta\in\mathbb{A}$ we have: (i) $\sup_\mathbb{R}(C_{\alpha+\beta})=\sup_\mathbb{R}(C_\alpha)+\sup_\mathbb{R}(C_\beta)$.; (ii) $\sup_\mathbb{R}(C_{\alpha\beta})=\sup_\mathbb{R}(C_\alpha)\sup_\mathbb{R}(C_\beta)$; (iii) $\alpha\leq\beta$ implies $C_\alpha\subseteq C_\beta$. Consequently, the mapping $\sigma:\mathbb{A}\to\mathbb{R}$, given by $\sigma(\alpha)=:\sup_\mathbb{R}(C_\alpha)$, is an order field embedding of $\mathbb{A}$ into $\mathbb{R}$.  
\end{theorem}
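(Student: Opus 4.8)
The plan is to first guarantee that all the suprema appearing in the statement actually exist, then verify (iii) together with strict monotonicity, then (i), then (ii), and finally assemble these into the conclusion. Since $\mathbb{A}$ is Archimedean, for each $\alpha\in\mathbb{A}$ there is $n\in\mathbb{N}$ with $|\alpha|\le n$; hence $-n-1\in C_\alpha$ (so $C_\alpha\neq\varnothing$) and every $q\in C_\alpha$ satisfies $q\le n$ in $\mathbb{Q}\subseteq\mathbb{R}$ (so $C_\alpha$ is bounded above in $\mathbb{R}$). By Dedekind completeness $\sup_\mathbb{R}(C_\alpha)$ exists, so $\sigma$ is well defined. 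Statement (iii) is immediate: if $\alpha\le\beta$ and $q<\alpha$ then $q<\beta$, whence $C_\alpha\subseteq C_\beta$ and $\sigma(\alpha)\le\sigma(\beta)$. Moreover, if $\alpha<\beta$, then Theorem~\ref{T: Rationals and Irrationals}(iii) (density of $\mathbb{Q}$ in the Archimedean field $\mathbb{A}$) gives rationals $q_1<q_2$ with $\alpha<q_1<q_2<\beta$; now $q_1$ bounds $C_\alpha$ from above while $q_2\in C_\beta$, so $\sigma(\alpha)\le q_1<q_2\le\sigma(\beta)$. Thus $\sigma$ is strictly order preserving, and in particular injective.

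For (i) I would first record the purely order-theoretic identity $\sup_\mathbb{R}(S+T)=\sup_\mathbb{R}(S)+\sup_\mathbb{R}(T)$, valid for any nonempty sets $S,T$ bounded above in the Dedekind complete field $\mathbb{R}$ (one direction makes $\sup S+\sup T$ an upper bound of $S+T$; for the other, fix an upper bound $u$ of $S+T$ and push the inequality $s+t\le u$ through two successive suprema). It then suffices to establish the set equality $C_{\alpha+\beta}=C_\alpha+C_\beta$, where $C_\alpha+C_\beta=\{q+r:q\in C_\alpha,\,r\in C_\beta\}$. The inclusion $\supseteq$ is clear. For $\subseteq$, take a rational $s<\alpha+\beta$; then $s-\beta<\alpha$ in $\mathbb{A}$, so density of $\mathbb{Q}$ in $\mathbb{A}$ yields a rational $q$ with $s-\beta<q<\alpha$, and setting $r=s-q$ gives $q\in C_\alpha$, $r\in C_\beta$ and $q+r=s$. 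Combining these, $\sigma(\alpha+\beta)=\sup_\mathbb{R}(C_\alpha+C_\beta)=\sigma(\alpha)+\sigma(\beta)$. In particular $\sigma(0)=0$ and $\sigma(-\alpha)=-\sigma(\alpha)$.

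Statement (ii) is where the real work lies, because the cut of a product does not factor cleanly through arbitrary (possibly negative) rationals. The plan is to prove it first for $\alpha,\beta>0$ and then reduce the general case using additivity. For $\alpha>0$ the positive rationals below $\alpha$ are cofinal in $C_\alpha$ (the Archimedean property rules out nonzero infinitesimals, so some $1/n<\alpha$), hence $\sigma(\alpha)=\sup_\mathbb{R}(P_\alpha)$ with $P_\alpha=\{q\in\mathbb{Q}:0<q<\alpha\}$, and $\sigma(\alpha)>0$. Repeating the density argument multiplicatively—given rational $0<s<\alpha\beta$, choose a rational $q$ with $s/\beta<q<\alpha$ and put $r=s/q$—establishes $P_{\alpha\beta}=P_\alpha\cdot P_\beta$. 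Together with the positive-set identity $\sup_\mathbb{R}(S\cdot T)=\sup_\mathbb{R}(S)\,\sup_\mathbb{R}(T)$ (proved exactly as its additive analogue, now using $\sup S,\sup T>0$), this gives $\sigma(\alpha\beta)=\sigma(\alpha)\sigma(\beta)$ for positive $\alpha,\beta$. The general identity then follows by cases from $\sigma(-\alpha)=-\sigma(\alpha)$ and $\sigma(0)=0$; for instance, when $\alpha>0>\beta$ one writes $\sigma(\alpha\beta)=-\sigma(\alpha(-\beta))=-\sigma(\alpha)\sigma(-\beta)=\sigma(\alpha)\sigma(\beta)$. I expect this sign bookkeeping, together with the care needed to keep every set involved nonempty and bounded so that its supremum exists, to be the main obstacle.

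Finally I would assemble the conclusion. By (i) and (ii) the map $\sigma$ preserves addition and multiplication; from $\sigma(1)=\sigma(1\cdot1)=\sigma(1)^2$ and $\sigma(1)\ge 1/2>0$ we obtain $\sigma(1)=1$, so $\sigma$ is a nontrivial field homomorphism. The strict monotonicity established in the first paragraph then makes $\sigma$ an injective, order-preserving field homomorphism, i.e.\ an order field embedding of $\mathbb{A}$ into $\mathbb{R}$, as claimed.
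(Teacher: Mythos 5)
Your proof is correct: the paper states this theorem without proof (it is one of the ``commonly known facts'' the section presents ``without or with very short proofs''), and your argument supplies exactly the standard development that the statement itself outlines --- existence of the suprema from the Archimedean property, strict monotonicity via density of $\mathbb{Q}$ in $\mathbb{A}$, the set identity $C_{\alpha+\beta}=C_\alpha+C_\beta$, and multiplicativity reduced to the positive case through $P_{\alpha\beta}=P_\alpha\cdot P_\beta$ and $\sigma(-\alpha)=-\sigma(\alpha)$. You also correctly identified and handled the one genuine subtlety, namely that the naive product identity for cuts fails for negative elements, so nothing is missing.
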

	
\begin{corollary}\label{C: ded order iso}
  All Dedekind complete fields are mutually order-isomorphic and they have the same cardinality, which is usually denoted by $\mathfrak c$. Consequently,  every Archimedean field has cardinality at most $\mathfrak c$.
\end{corollary}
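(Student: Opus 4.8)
The plan is to deduce everything from the Embedding Theorem~\ref{T: Embedding}, whose hypotheses require the domain field to be Archimedean; so the first order of business is to record that Dedekind completeness forces the Archimedean property.

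First I would show that every Dedekind complete field $\mathbb{R}$ is Archimedean. If it were not, then by Theorem~\ref{T: Archimedean Property} the copy of $\mathbb{N}$ sitting inside $\mathbb{R}$ (Theorem~\ref{T: Rationals and Irrationals}) would be bounded above, hence would have a supremum $s=\sup_\mathbb{R}(\mathbb{N})$ by Dedekind completeness. But then $s-1$ is not an upper bound, so $n>s-1$ for some $n\in\mathbb{N}$, whence $n+1>s$---contradicting that $s$ bounds $\mathbb{N}$. Thus $\mathbb{R}$ is Archimedean.

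Next, given two Dedekind complete fields $\mathbb{R}_1$ and $\mathbb{R}_2$, both are Archimedean by the previous paragraph, so Theorem~\ref{T: Embedding} applies with $\mathbb{A}=\mathbb{R}_1$ and target $\mathbb{R}_2$, producing an order field embedding $\sigma:\mathbb{R}_1\to\mathbb{R}_2$ given by $\sigma(\alpha)=\sup_{\mathbb{R}_2}(C_\alpha)$, where $C_\alpha=\{q\in\mathbb{Q}:q<\alpha\}$. It remains to prove that $\sigma$ is onto. Fix $r\in\mathbb{R}_2$ and set $D=\{q\in\mathbb{Q}:q<r\}$; since $\mathbb{R}_2$ is Archimedean, $D$ is bounded above in $\mathbb{R}_1$ (by any integer exceeding $r$), so $\alpha=\sup_{\mathbb{R}_1}(D)$ exists. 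Using the density of $\mathbb{Q}$ in the Archimedean fields (Theorem~\ref{T: Rationals and Irrationals}(iii)), one checks that a rational $q$ satisfies $q<\alpha$ if and only if $q<r$, i.e. $C_\alpha=D$; hence $\sigma(\alpha)=\sup_{\mathbb{R}_2}(D)=r$, again by density. Therefore $\sigma$ is an order isomorphism, so all Dedekind complete fields are mutually order-isomorphic and in particular share a common cardinality, denoted $\mathfrak{c}$.

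Finally, for an arbitrary Archimedean field $\mathbb{A}$, fix a Dedekind complete field $\mathbb{R}$ (which exists by Theorem~\ref{T: Existence of Dedekind Fields}); Theorem~\ref{T: Embedding} embeds $\mathbb{A}$ into $\mathbb{R}$, so $\card(\mathbb{A})\le\card(\mathbb{R})=\mathfrak{c}$. The main obstacle is the surjectivity of $\sigma$: the Embedding Theorem only supplies an injection, and recovering a preimage for each $r\in\mathbb{R}_2$ requires building the correct rational cut $D$ inside $\mathbb{R}_1$ and verifying $C_\alpha=D$. This is exactly the step where the Dedekind completeness of the \emph{domain} (and not merely of the target) is essential.
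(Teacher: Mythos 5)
Your proof is correct and follows exactly the route the paper intends: the corollary is stated there without proof as a direct consequence of Theorem~\ref{T: Embedding}, and your surjectivity argument via the rational cut $D=\{q\in\mathbb{Q}: q<r\}$ (checking $C_\alpha=D$ and $\sup_{\mathbb{R}_2}(D)=r$ by density of $\mathbb{Q}$) is the standard way to upgrade that embedding to an order isomorphism. Note also that your first paragraph (Dedekind completeness implies the Archimedean property) is precisely Theorem~\ref{T: Dedekind} of the paper, proved there by essentially the same $\sup_\mathbb{R}(\mathbb{N})$ argument, so you correctly identified the one auxiliary fact needed before Theorem~\ref{T: Embedding} may be applied with a Dedekind complete field in the role of $\mathbb{A}$.
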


\begin{theorem}\label{T: Dedekind} Every Dedekind complete totally ordered field is Archimedean.
\end{theorem}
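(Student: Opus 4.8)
The plan is to argue by contradiction, exploiting the canonical copy of the natural numbers $\mathbb{N}\subseteq\mathbb{Q}\subseteq\mathbb{K}$ supplied by Theorem~\ref{T: Rationals and Irrationals}. The whole force of Dedekind completeness is that non-empty bounded-above sets acquire a least upper bound, so the strategy is to show that a failure of the Archimedean property forces the embedded set $\mathbb{N}$ itself to be bounded above in $\mathbb{K}$, and then to extract a contradiction from $s=:\sup_{\mathbb{K}}(\mathbb{N})$.

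First I would assume, toward a contradiction, that $\mathbb{K}$ is \emph{not} Archimedean. By the definition of the Archimedean property (Definition~\ref{D: Totally Ordered Rings}), equivalently by Theorem~\ref{T: Archimedean Property}(iii) ($\mathcal{L}(\mathbb{K})\neq\varnothing$), there is an $x\in\mathbb{K}$ with $n<|x|$ for every $n\in\mathbb{N}$. Such an $|x|$ is by definition an upper bound of $\mathbb{N}$, so $\mathbb{N}$ is non-empty and bounded above. Dedekind completeness (Definition~\ref{D: Completeness of a Totally Ordered Field}) then produces $s=:\sup_{\mathbb{K}}(\mathbb{N})\in\mathbb{K}$.

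Next I would run the classical ``$s-1$'' argument. Since $0<1$ in any ordered field we have $s-1<s$, so $s-1$ fails to be an upper bound of $\mathbb{N}$ because $s$ is the \emph{least} one; hence there exists $m\in\mathbb{N}$ with $s-1<m$, i.e. $s<m+1$. But $m+1\in\mathbb{N}$, which contradicts the fact that $s$ is an upper bound of $\mathbb{N}$. This contradiction shows $\mathbb{N}$ is unbounded above in $\mathbb{K}$, which is precisely the Archimedean property, finishing the proof.

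I do not expect a genuine obstacle here; the argument is entirely routine once the embedded copy of $\mathbb{N}$ is in hand. The only points deserving a moment's care are bookkeeping ones: one must invoke the order-field embedding $\sigma$ of Theorem~\ref{T: Rationals and Irrationals} to make sense of ``$\mathbb{N}\subseteq\mathbb{K}$'' and of ``$m+1$'' as an element of $\mathbb{K}$, and one must note that $s-1<s$ is immediate from the ordered-field axioms. An alternative, essentially equivalent, route uses the infinitesimal characterization of Theorem~\ref{T: Archimedean Property}: a non-Archimedean field carries a nonzero infinitesimal $\varepsilon$, the set $\{\,n\varepsilon:n\in\mathbb{N}\,\}$ is bounded above (by $1$, say), and applying the same least-upper-bound trick to $s-\varepsilon$ yields the contradiction. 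The $\mathbb{N}$-based argument is shorter, however, and that is the one I would present.
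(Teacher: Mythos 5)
Your proof is correct and takes essentially the same route as the paper's: assume $\mathbb{K}$ is non-Archimedean, invoke Theorem~\ref{T: Archimedean Property} to produce an infinitely large element bounding $\mathbb{N}$ from above, form $s=\sup_{\mathbb{K}}(\mathbb{N})$ by Dedekind completeness, and derive a contradiction from the classical observation that $s-1$ is not an upper bound, yielding $m\in\mathbb{N}$ with $s<m+1$. The only differences are cosmetic (your explicit mention of the embedding $\sigma$ from Theorem~\ref{T: Rationals and Irrationals} and the alternative $n\varepsilon$ argument, neither of which changes the substance).
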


\begin{proof}
  Let $\mathbb{R}$ be such a field and suppose, to the contrary, that $\mathbb{R}$ is non-Archimedean. Then  $\mathcal L(\mathbb{R})\not=\varnothing$ by Theorem~\ref{T: Archimedean Property}. Thus $\mathbb{N}\subset\mathbb{R}$ is bounded from above by $|\lambda|$ for any $\lambda\in\mathcal L(\mathbb R)$ so that $\alpha=\sup_\mathbb R(\mathbb N)\in\mathbb{K}$ exists. Then there exists $n\in\mathbb{N}$ such that $\alpha-1< n$ implying $\alpha< n+1$, a contradiction.
\end{proof}
\section{Completeness of an Archimedean Field}\label{S: Completeness of an Archimedean Field}

	We show that in the particular case of an Archimedean field, the different forms of completeness (1)-(10) in Definition~\ref{D: Completeness of a Totally Ordered Field} are equivalent. In the case of a non-Archimedean field, the equivalence of these different forms of completeness fails to hold -- we shall discuss this in the next section.

\begin{theorem}[Completeness of an Archimedean Field]\label{T: Completeness of an Archimedean Field} Let $\mathbb{K}$ be a totally ordered Archimedean field. Then the following are equivalent. 
  \begin{thm-enum}
    \item $\mathbb{K}$ is Cantor $\kappa$-complete for any infinite cardinal $\kappa$.
     
    \item $\mathbb{K}$ is Leibniz complete.

	\item $\mathbb K$ is Heine-Borel complete.
                     
    \item $\mathbb{K}$ is monotone complete.
                     
    \item $\mathbb{K}$ is Cantor complete (i.e. Cantor $\aleph_1$-complete, not for all cardinals).
                     
    \item $\mathbb{K}$ is Weierstrass complete.
      
    \item $\mathbb{K}$ is Bolzano complete.
                     
    \item $\mathbb{K}$ is Cauchy complete.
                     
    \item $\mathbb{K}$ is Dedekind complete.

    \item $\mathbb{K}$ is Hilbert complete.
  \end{thm-enum}
\end{theorem}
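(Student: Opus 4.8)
The plan is to collapse all ten conditions onto a single statement about the \emph{size} of $\mathbb{K}$. Since $\mathbb{K}$ is Archimedean, Theorem~\ref{T: Embedding} together with Corollary~\ref{C: ded order iso} lets me fix a Dedekind complete field $\mathbb{R}$ and identify $\mathbb{K}$ with a subfield satisfying $\mathbb{Q}\subseteq\mathbb{K}\subseteq\mathbb{R}$; under this identification $\mathbb{K}$ is Dedekind complete (condition (ix)) if and only if $\mathbb{K}=\mathbb{R}$. Two elementary facts will be used repeatedly: $\mathbb{Q}$ is dense in $\mathbb{K}$ and in $\mathbb{R}$ (Theorem~\ref{T: Rationals and Irrationals}(iii)), so the order topology of $\mathbb{K}$ coincides with the subspace topology induced from $\mathbb{R}$; consequently a sequence converges in $\mathbb{K}$ if and only if it converges in $\mathbb{R}$ to a limit lying in $\mathbb{K}$. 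Finally, $\cof(\mathbb{K})=\aleph_0$ because $\mathbb{N}$ is cofinal, so Cauchy completeness (viii) is exactly sequential completeness. The goal is thus to show that each of (i)--(viii) and (x) is equivalent to $\mathbb{K}=\mathbb{R}$, i.e.\ to (ix).

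First I would prove (ix)$\,\Rightarrow\,$(everything). Assuming $\mathbb{K}=\mathbb{R}$, conditions (iii)--(viii) are the standard theorems of real analysis (monotone convergence, Bolzano--Weierstrass via repeated bisection, Cauchy completeness, the nested interval principle, and Heine--Borel), and (i) follows because a family of closed bounded intervals with the finite intersection property satisfies $\sup_\gamma a_\gamma\le\inf_\gamma b_\gamma$, after which Dedekind completeness supplies a common point irrespective of the cardinality of the family. Two conditions need separate comment. For Leibniz completeness (ii) I fix a nonstandard extension with ${}^*\mathbb{K}\subseteq{}^*\mathbb{R}$; Dedekind completeness yields the standard part $\st(x)=\sup\{q\in\mathbb{Q}:q<x\}\in\mathbb{K}$ of each finite $x$, giving the decomposition $x=\st(x)+(x-\st(x))$, while uniqueness is automatic since $\mathcal{I}(\mathbb{K})=\{0\}$ by Theorem~\ref{T: Archimedean Property}. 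For Hilbert completeness (x), any Archimedean extension of $\mathbb{R}$ embeds back into $\mathbb{R}$ by Theorem~\ref{T: Embedding}, and that embedding fixes $\mathbb{Q}$ hence all of $\mathbb{R}$, forcing the extension to equal $\mathbb{R}$; thus $\mathbb{R}$ is maximal Archimedean.

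Next I would prove the contrapositive of each remaining implication with a single witness. Suppose (ix) fails, so $\mathbb{K}\subsetneq\mathbb{R}$; fix $r\in\mathbb{R}\setminus\mathbb{K}$ and an increasing sequence of rationals $q_n\to r$ together with a decreasing $q_n'\to r$. This witness defeats every condition at once: $(q_n)$ is bounded and monotone yet has no limit in $\mathbb{K}$, killing (iv); it is Cauchy but not convergent in $\mathbb{K}$, killing (viii); no subsequence converges in $\mathbb{K}$, killing (vi) and (vii); the nested intervals $[q_n,q_n']$ have empty intersection in $\mathbb{K}$, killing (v) and a fortiori (i); the closed bounded set $\{x\in\mathbb{K}:0\le x\le1\}$ (here we may take $r\in(0,1)$, since $\mathbb{R}\setminus\mathbb{K}$ is dense) is covered by the open sets $\{x\in\mathbb{K}:|x-r|>1/n\}$ with no finite subcover, so it is closed and bounded but not compact, killing (iii); and $\mathbb{R}$ itself is a proper Archimedean extension of $\mathbb{K}$, killing (x). For Leibniz completeness (ii), the ultrapower element $\xi=[(q_n)]\in\mathcal{F}({}^*\mathbb{K})$ satisfies $\xi\approx r$ in ${}^*\mathbb{R}$, so no $s\in\mathbb{K}$ can have $\xi-s$ infinitesimal (that would force $s=r\notin\mathbb{K}$), and $\xi$ admits no decomposition.

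The one place that needs genuine care, rather than a classical citation or a one-line squeeze, is the Leibniz condition (ii): because it is phrased through the nonstandard extension, I must justify the compatible inclusion ${}^*\mathbb{K}\subseteq{}^*\mathbb{R}$, invoke the standard part theorem in the forward direction, and exhibit an explicit finite nonstandard number with no standard part in $\mathbb{K}$ in the converse. I expect this to be the main obstacle; the remaining nine equivalences are either textbook facts about $\mathbb{R}$ or immediate consequences of the ``squeeze by rationals'' witness, the only subtlety being the identification of the order topology of $\mathbb{K}$ with the subspace topology from $\mathbb{R}$, which is what guarantees that ``convergent in $\mathbb{K}$'' means ``convergent in $\mathbb{R}$ with limit in $\mathbb{K}$.''
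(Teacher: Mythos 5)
Your proposal is correct in substance, but it is architecturally quite different from the paper's proof, which establishes the single cyclic chain $(i)\Rightarrow(ii)\Rightarrow(iii)\Rightarrow(iv)\Rightarrow(v)\Rightarrow(vi)\Rightarrow(vii)\Rightarrow(viii)\Rightarrow(ix)\Rightarrow(x)\Rightarrow(i)$, proving each link directly (for instance, $(ii)\Rightarrow(iii)$ is done by the nonstandard characterization of compactness via $^*S\subset\mu(S)$, and $(iv)\Rightarrow(v)$ by taking limits of the endpoint sequences). You instead use Theorem~\ref{T: Embedding} and Theorem~\ref{T: Existence of Dedekind Fields} to realize $\mathbb{Q}\subseteq\mathbb{K}\subseteq\mathbb{R}$ and collapse everything onto the hub ``$(ix)$ iff $\mathbb{K}=\mathbb{R}$'': forward directions are classical theorems about $\mathbb{R}$ plus the standard-part construction $\st(x)=\sup\{q\in\mathbb{Q}:q<x\}$, and all converses fail simultaneously on the single witness $r\in\mathbb{R}\setminus\mathbb{K}$ squeezed by rational sequences. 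What your route buys is uniformity and transparency -- one counterexample kills $(i)$, $(iii)$--$(viii)$ and $(x)$ at once, and the genuinely nonstandard content is isolated in $(ii)$, exactly where you flag it; what the paper's chain buys is self-containedness (no appeal to ``standard theorems of real analysis'' as black boxes -- it proves monotone convergence, Cantor, Cauchy, etc.\ along the way) and reusability of individual links (the proof of $(i)\Rightarrow(ii)$ is cited verbatim for Theorem~\ref{T: Cardinality and Cantor Completeness}, and the paper notes $(ii)\Leftrightarrow(ix)$ matches Keisler and $(ix)\Leftrightarrow(x)$ matches Banaschewski). Three details you should nail down when writing this up, none fatal: (1) the claim ``$\mathbb{K}$ Dedekind complete iff $\mathbb{K}=\mathbb{R}$ as subsets'' needs the short density argument that $\sup_{\mathbb{K}}\{q\in\mathbb{Q}:q<r\}$ would equal $r$ for any $r\in\mathbb{R}\setminus\mathbb{K}$ (this is essentially the paper's $(ix)\Rightarrow(x)$ argument, so it is genuinely load-bearing, not free); (2) density of $\mathbb{R}\setminus\mathbb{K}$ for your Heine--Borel witness follows from rational translates $r+q\notin\mathbb{K}$; and (3) your refutation of $(ii)$ uses the ultrapower element $\xi=\bra q_n\ket$, which is legitimate under the paper's Example~5 definition of $^*\mathbb{K}$, but if the definition is read as quantifying over an arbitrary nonstandard extension you should instead transfer the Cauchy property of $(q_n)$ to get $\xi={}^*f(N)$ for an infinite hypernatural $N$, after which the same argument ($\xi\approx s\in\mathbb{K}$ forces $s=\lim q_n=r\notin\mathbb{K}$) goes through.
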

 
\begin{proof}
  \mbox{} 
  \begin{description}
  \item[$(i)\Rightarrow(ii)$:] Let $\kappa$ be the successor of $\card(\mathbb{K})$. Let $x\in\mathcal F(^*\mathbb K)$ and $S=:\{[a,b]:a,b\in\mathbb K\emph{ and } a\le x\le b\textrm{ in }{^*\mathbb K}\}$. Clearly $S$ satisfies the finite intersection property and $\card(S)=\card(\mathbb{K}\times\mathbb{K})=\card(\mathbb{K})<\kappa$; thus, by assumption, there exists $r\in \bigcap_{[a,b]\in S} [a,b]$. To show $x-r\in\mathcal I(^*\mathbb K)$, suppose (to the contrary) that $\frac{1}{n}<|x-r|$ for some $n\in\mathbb N$. Then either $x<r-\frac{1}{n}$ or $r+\frac{1}{n}<x$. Thus (after letting $r-\frac{1}{n}=b$ or $r+\frac{1}{n}=a$) we conclude that either $r\le r-\frac{1}{n}$, or $r+\frac{1}{n}\le r$, a contradiction. 

 \item[$(ii)\Rightarrow(iii)$:] Our assumption (ii) justifies the following definitions: We define $\st: \mathcal{F}(^*\mathbb K)\to\mathbb K$ by $\st(x)=r$ for $x=r+dx$, $dx\in\mathcal I({^*\mathbb K})$. Also, if $S\subset\mathbb K$, we let $\st[^*S]=\{\st(x): x\in{^*S}\cap\mathcal{F}(^*\mathbb K)\}$.  If $S$ is compact, then $S$ is bounded and closed since $\mathbb K$ is a Hausdorff space as an ordered field. Conversely, if $S$ is bounded and closed, it follows that $^*S\subset\mathcal{F}(^*\mathbb K)$ (Davis~\cite{davis}, p. 89) and $\st[^*S]=S$ (Davis~\cite{davis}, p. 77), respectively. Thus $^*S\subset\mu(S)$, i.e. $S$ is compact (Davis~\cite{davis}, p. 78). 
 
  \item[$(iii)\Rightarrow(iv)$:] Let $(x_n)$ be a bounded from above strictly increasing sequence in $\mathbb K$ and let $A=\{x_n\}$ denote the range of the sequence. Clearly $\overline A\setminus A$ is either empty or contains a single element which is the limit of $(a_n)$; hence it suffices to show that $\overline A\neq A$. To this end, suppose, to the contrary, that $\overline A=A$. Then we note that $A$ is compact by assumption since $(a_n)$ is bounded; however, if we define $(r_n)$ by $r_1=1/2(x_2 - x_1)$, $r_n=\min\{r_1,\ldots, r_{n-1}, 1/2(x_{n+1} - x_n)\}$, then we observe that the sequence of open intervals $(U_n)$, defined by $U_n = (x_n-r_n, x_n+r_n)$, is an open cover of $A$ that has no finite subcover. Indeed, $(U_n)$ is pairwise disjoint so that every finite subcover contains only a finite number of terms of the sequence. The latter contradicts the compactness of $A$. 
 
  \item[$(iv)\Rightarrow(v)$:] Suppose that $\{[a_i,b_i]\}_{i\in\mathbb{N}}$ satisfies the finite intersection property. Let $\Gamma_n=:\cap_{i=1}^n[a_i,b_i]$ and observe that $\Gamma_n=[\alpha_n, \beta_n]$ where $\alpha_n=:\max_{i\le n} a_i$ and $\beta_n=:\min_{i\le n}b_i$. Then $\{\alpha_n\}_{n\in\mathbb N}$ and $\{-\beta_n\}_{n\in\mathbb N}$ are bounded increasing sequences; thus $\alpha=:\lim_{n\to\infty}\alpha_n$ and $-\beta=:\lim_{n\to\infty}-\beta_n$ exist by assumption. If $\beta<\alpha$, then for some $n$ we would have $\beta_n<\alpha_n$, a contradiction; hence, $\alpha\le\beta$. Therefore $\cap_{i=1}^{\infty}[a_i,b_i]=[\alpha,\beta]\neq\varnothing$.

  \item[$(v)\Rightarrow(vi)$:] This is the familiar \emph{Bolzano-Weierstrass Theorem} (Bartle \& Sherbert~\cite{bartle}, p. 79). 

 \item[$(vi)\Rightarrow(vii)$:] Let $A\subset \mathbb{K}$ be a bounded infinite set. By the Axiom of Choice, $A$ has a denumerable subset -- that is, there exists an injection $\{x_n\}:\mathbb{N}\to A$. As $A$ is bounded, $\{x_n\}$ has a subsequence $\{x_{n_k}\}$ that converges to a point $x\in\mathbb{K}$ by assumption. Then $x$ must be a cluster point of $A$ because the sequence $\{x_{n_k}\}$ is injective, and thus not eventually constant.

  \item[$(vii)\Rightarrow(viii)$:] For the index set we can assume that $I=\mathbb N$ since cofinality of any Archimedean set is $\aleph_0=\card(\mathbb N)$. Let $\{x_n\}$ be a Cauchy sequence in $\mathbb{K}$. Then $\mathrm{range}(\{x_n\})$ is a bounded set. If $\mathrm{range}(\{x_n\})$ is finite, then $\{x_n\}$ is eventually constant (and thus convergent). Otherwise, $\mathrm{range}(\{x_n\})$ has a cluster point $L$ by assumption. To show that $\{x_n\}\to L$, let $\epsilon\in\mathbb{K}_+$ and $N\in\mathbb{N}$ be such that $n,m\ge N$ implies that $|x_n-x_m|<\frac{\epsilon}{2}$. Observe that the set $\{n\in\mathbb{N} : |x_n-L|<\frac{\epsilon}{2} \}$ is infinite because $L$ is a cluster point, so that $A=:\{n\in\mathbb{N} : |x_n-L|<\frac{\epsilon}{2}, n\ge N\}$ is non-empty. Let $M=:\min A$. Then, for $n\ge M$, we have $|x_n-L|\le|x_n-x_M|+|x_M-L|<\epsilon$, as required.
  
  \item[$(viii)\Rightarrow(ix)$:] This proof can be found in (Hewitt \& Stromberg~\cite{hewitt}, p. 44).

  \item[$(ix)\Rightarrow(x)$:] Let $\mathbb{A}$ be a totally ordered Archimedean field extension of $\mathbb{K}$. We have to show that $\mathbb{A}=\mathbb{K}$. Recall that $\mathbb{Q}$ is dense in $\mathbb{A}$ as it is Archimedean; hence, the set $\{q\in \mathbb{Q} : q<a\}$ is non-empty and bounded above in $\mathbb{K}$ for all $a\in\mathbb{A}$. Consequently, the mapping 
$\sigma:\mathbb{A}\to\mathbb{K}$, where $\sigma(a)=:\sup_\mathbb{K}\{q\in\mathbb{Q} : q<a\}$, is well-defined by our assumption. Note that $\sigma$ fixes $\mathbb{K}$. To show that $\mathbb{A}=\mathbb{K}$ we will show that $\sigma$ is just the identity map. Suppose (to the contrary) that $\mathbb{A}\neq\mathbb{K}$ and let $a\in\mathbb{A}\setminus\mathbb{K}$. Then $\sigma(a)\neq a$ so that either $\sigma(a)>a$ or $\sigma(a)<a$. If it is the former, then there exists $p\in\mathbb{Q}$ such that $a<p<\sigma(a)$, contradicting the fact that $\sigma(a)$ is the \emph{least} upper bound for $\{q\in\mathbb{Q} : q<a\}$ and if it is the latter then there exists $p\in\mathbb{Q}$ such that $\sigma(a)<p<a$, contradicting the fact that $\sigma(a)$ is an upper bound for $\{q\in\mathbb{Q} : q<a\}$.  

  \item[$(x)\Rightarrow(i)$:]  Let $\mathbb{D}$ be a Dedekind complete field (such a field exists by Theorem~\ref{T: Existence of Dedekind Fields}). We can assume that $\mathbb{K}$ is an ordered subfield of $\mathbb{D}$ by Theorem~\ref{T: Embedding}. Thus we have $\mathbb{K}=\mathbb{D}$ by assumption, since $\mathbb D$ is Archimedean. Now, suppose (to the contrary) that there is an infinite cardinal $\kappa$ and a family $[a_i,b_i]_{i\in I}$ of fewer than $\kappa$ closed bounded intervals in $\mathbb K$ with the finite intersection property such that $\bigcap_{i\in I}[a_i,b_i]=\varnothing$.  Because $[a_i,b_i]$ satisfies the finite intersection property, the set $A=:\{a_i :i\in I\}$ is bounded from above and non-empty so that $c=:\sup(A)$ exists in $\mathbb{D}$. Thus $a_i\le c\le b_i$ for all $i\in I$ so that $c\in\mathbb{D}\setminus\mathbb{K}$. Thus $\mathbb{D}$ is a proper field extension of $\mathbb{K}$, a contradiction. 
\end{description}
\end{proof}

\begin{remark}
  It should be noted that the equivalence of $(ii)$ and $(ix)$ above is proved in Keisler (\cite{jKeislerF}, pp. 17-18) with somewhat different arguments. Also, the equivalence of $(ix)$ and $(x)$ is proved in Banaschewski~\cite{bana} using a different method than ours (with the help of the axiom of choice).
\end{remark}

\section{Completeness of a Non-Archimedean Field}\label{S: Completeness of a Non-Archimedean Field}

In this section, we discuss how removing the assumption that $\mathbb K$ is Archimedean affects our result from the previous section. In particular, several of the forms of completeness listed in Definition~\ref{D: Completeness of a Totally Ordered Field} no longer hold, and those that do are no longer equivalent. 

\begin{theorem}\label{T: completeness -> archimedean}
   Let $\mathbb K$ be an ordered field satisfying any of the following:
   \begin{thm-enum}
      \item Bolzano complete.
      \item Weierstrass complete.
      \item Monotone complete.
      \item Dedekind complete
      \item Cantor $\kappa$-complete for $\kappa>\card(\mathbb K)$.
      \item Leibniz complete (in the sense that every finite number can be decomposed \emph{uniquely} into the sum of an element of $\mathbb K$ and an infinitesimal).
   \end{thm-enum}
 Then $\mathbb K$ is Archimedean. Consequently, if $\mathbb K$ is non-Archimedean, then each of (i)-(vi) is false. 
\end{theorem}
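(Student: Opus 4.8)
The plan is to argue by contradiction in every case: assuming $\mathbb K$ is non-Archimedean, Theorem~\ref{T: Archimedean Property} supplies a nonzero infinitesimal $\delta\in\mathcal I(\mathbb K)$ and, equivalently, a positive infinitely large element $\lambda\in\mathcal L(\mathbb K)$. The embedded copy of $\mathbb N\subset\mathbb K$ is then bounded above by $\lambda$, while successive naturals stay a fixed distance $\geq 1$ apart. Case (iv) requires nothing new, since Theorem~\ref{T: Dedekind} already shows every Dedekind complete field is Archimedean; each of the remaining five cases will reduce to one of two obstructions -- the bounded, uniformly spaced copy of $\mathbb N$, or the nonzero infinitesimal $\delta$.

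First I would dispatch (i), (ii), (iii) using the bounded sequence $(n)_{n\in\mathbb N}$. For monotone completeness (iii) this sequence is strictly increasing and bounded above, hence convergent by assumption; but every convergent sequence in an ordered field is Cauchy, contradicting $|(n+1)-n|=1$. For Weierstrass completeness (ii) the same bounded sequence would have a convergent -- hence Cauchy -- subsequence $(n_k)$, which is impossible since $|n_{k+1}-n_k|\geq 1$. For Bolzano completeness (i) the bounded infinite set $\mathbb N$ would have a cluster point $c$, yet $(c-\tfrac12,c+\tfrac12)$ meets $\mathbb N$ in at most one point, so some neighborhood of $c$ contains no point of $\mathbb N\setminus\{c\}$, contradicting that $c$ is a cluster point. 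Case (vi) is shortest: the nonzero infinitesimal admits the two decompositions $\delta=0+\delta=\delta+0$, both expressing $\delta$ as a sum of an element of $\mathbb K$ and an infinitesimal, which destroys the required uniqueness; hence Leibniz completeness forces $\mathcal I(\mathbb K)=\{0\}$, i.e. $\mathbb K$ is Archimedean.

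The one case demanding an actual construction -- and the step I expect to be the main obstacle -- is (v). Here I would exhibit a family of closed bounded intervals with the finite intersection property but empty intersection, using strictly fewer than $\kappa$ of them. The natural candidate is
\[
\mathcal C=\bigl\{\,[\,n,\mu\,] : n\in\mathbb N,\ \mu\in\mathcal L(\mathbb K),\ \mu>0\,\bigr\}.
\]
Its cardinality is at most $\aleph_0\cdot\card(\mathbb K)=\card(\mathbb K)<\kappa$, so $\mathcal C$ is an admissible family. Any finite subfamily intersects in $[\max_i n_i,\min_j\mu_j]$, which is nonempty because every natural lies below every positive infinitely large element; thus $\mathcal C$ has the finite intersection property. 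A point $x\in\bigcap\mathcal C$ would have to satisfy $x\geq n$ for all $n\in\mathbb N$ and $x\leq\mu$ for every positive $\mu\in\mathcal L(\mathbb K)$, i.e. $x$ would be a least positive infinitely large element. But no such element exists, since $\mu\in\mathcal L(\mathbb K)$ with $\mu>0$ forces $\mu-1\in\mathcal L(\mathbb K)$ with $0<\mu-1<\mu$. Hence $\bigcap\mathcal C=\varnothing$, contradicting Cantor $\kappa$-completeness, so $\mathbb K$ must be Archimedean. The final assertion of the theorem is then immediate: were $\mathbb K$ non-Archimedean, none of (i)--(vi) could hold.
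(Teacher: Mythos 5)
Your proof is correct, and it is in fact more complete than the one printed in the paper: the paper writes out only case \emph{(vi)} and explicitly leaves the rest to the reader, with \emph{(iv)} already covered by Theorem~\ref{T: Dedekind} and \emph{(v)} delegated to Theorem~\ref{T: Cardinality and Cantor Completeness}, whose proof is in turn deferred to the argument of $(i)\Rightarrow(ii)$ in Theorem~\ref{T: Completeness of an Archimedean Field}. For case \emph{(vi)} your witness is a mild simplification of the paper's: the paper takes an arbitrary $\alpha\in\mathcal F(^*\mathbb K)$ with decomposition $\alpha=k+d\alpha$ and perturbs it to $(k+dx)+(d\alpha-dx)$ using a nonzero $dx\in\mathcal I(\mathbb K)$, whereas you apply the same idea to the single element $\delta$ itself via $\delta=0+\delta=\delta+0$; both versions hinge on the one fact you use silently and should state, namely that $\mathcal I(\mathbb K)\subseteq\mathcal I(^*\mathbb K)$ because the ordering of $^*\mathbb K$ extends that of $\mathbb K$ (the paper makes this remark explicitly). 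Your cases \emph{(i)}--\emph{(iii)}, which reduce everything to the bounded, $1$-separated copy of $\mathbb N$ (convergent implies Cauchy, and an interval of length $1$ meets $\mathbb N$ in at most one point), are exactly the routine arguments the authors intend by ``left to the reader,'' and your handling of the cluster-point case, shrinking the neighborhood to exclude the one possible stray natural, closes the only loophole there. Your case \emph{(v)} is the genuinely constructive contribution: the family $\bigl\{[n,\mu] : n\in\mathbb N,\ \mu\in\mathcal L(\mathbb K),\ \mu>0\bigr\}$ of at most $\aleph_0\cdot\card(\mathbb K)=\card(\mathbb K)<\kappa$ closed bounded intervals has the F.I.P.\ yet empty intersection, since a common point would be a least positive infinitely large element while $\mu-1$ is always a smaller one; this gives a self-contained proof of \emph{(v)} where the paper instead recycles the interval-trapping argument from Theorem~\ref{T: Completeness of an Archimedean Field}, and it isolates nicely the real obstruction in the non-Archimedean setting: the unfillable gap between $\mathbb N$ and $\mathcal L(\mathbb K)$.
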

\begin{proof}
   We will only prove the case for Leibniz completeness and leave the rest to the reader.

  Suppose, to the contrary, that $\mathbb K$ is non-Archimedean. Then there exists a $dx\in\mathcal I(\mathbb K)$ such that $dx\neq0$ by Theorem~\ref{T: Archimedean Property}. Now take $\alpha\in\mathcal F(^*\mathbb K)$ arbitrarily. By assumption there exists unique $k\in\mathbb K$ and $d\alpha\in\mathcal I(^*\mathbb K)$ such that $\alpha=k+d\alpha$. However, we know that $dx\in\mathcal I(^*\mathbb K)$ as well because $\mathbb{K}\subset{^*\mathbb{K}}$ and the ordering in $^*\mathbb{K}$ extends that of $\mathbb{K}$. Thus $(k+dx)+(d\alpha-dx)=k+d\alpha=\alpha$ where $k+dx\in\mathbb K$ and $d\alpha-dx\in\mathcal I(^*\mathbb K)$. This contradicts the uniqueness of $k$ and $d\alpha$. Therefore $\mathbb K$ is Archimedean.
\end{proof}

As before, $\kappa^+$ stands for the successor of $\kappa$, $\aleph_1=\aleph_0^+$ and $\aleph_0=\card(\mathbb N)$.
\begin{theorem}[Cardinality and Cantor Completeness]\label{T: Cardinality and Cantor Completeness}
  Let $\mathbb K$ be an ordered field. If $\mathbb K$ is non-Archimedean and Cantor $\kappa$-complete (see Definition~\ref{D: Completeness of a Totally Ordered Field}), then $\kappa\le\card(\mathbb K)$.
\end{theorem}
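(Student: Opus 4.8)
The plan is to prove the contrapositive: assuming $\mathbb{K}$ is non-Archimedean and $\kappa > \card(\mathbb{K})$, I would exhibit a family of fewer than $\kappa$ closed bounded intervals with the finite intersection property whose total intersection is empty, thereby showing that $\mathbb{K}$ fails to be Cantor $\kappa$-complete. The whole point is that non-Archimedeanness produces a concrete ``gap'' in $\mathbb{K}$ that can be bracketed by a family of intervals indexed by a set no larger than $\mathbb{K}$ itself.

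Concretely, since $\mathbb{K}$ is non-Archimedean, Theorem~\ref{T: Archimedean Property} gives $\mathcal{L}(\mathbb{K}) \neq \varnothing$, so the sets $\mathcal{F}_+ = \{f \in \mathcal{F}(\mathbb{K}) : f > 0\}$ and $\mathcal{L}_+ = \{\ell \in \mathcal{L}(\mathbb{K}) : \ell > 0\}$ are both non-empty (for instance $1 \in \mathcal{F}_+$). I would then consider the family
\[
\mathcal{C} = \{[f, \ell] : f \in \mathcal{F}_+,\ \ell \in \mathcal{L}_+\}.
\]
Every such $[f,\ell]$ is a genuine closed bounded interval because a positive finite element is strictly smaller than any positive infinite element, and the indexing gives $\card(\mathcal{C}) \le \card(\mathbb{K} \times \mathbb{K}) = \card(\mathbb{K}) < \kappa$ (the field is infinite as $\mathbb{Q} \subseteq \mathbb{K}$), so $\mathcal{C}$ consists of fewer than $\kappa$ intervals.

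Next I would check the two required properties. For the finite intersection property, a finite subfamily $[f_1,\ell_1],\dots,[f_n,\ell_n]$ has intersection $[\max_i f_i, \min_i \ell_i]$, which is non-empty because $\max_i f_i$ is finite while $\min_i \ell_i$ is infinite, hence $\max_i f_i < \min_i \ell_i$. For emptiness of the total intersection, suppose $x \in \bigcap \mathcal{C}$. Then $x \ge f$ for every positive finite $f$ (in particular $x \ge n$ for all $n \in \mathbb{N}$), which forces $x$ to be positive and infinite; but then $x/2 \in \mathcal{L}_+$ as well, and taking $\ell = x/2$ gives $x \le x/2$, i.e. $x \le 0$, a contradiction. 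Hence $\bigcap \mathcal{C} = \varnothing$, completing the contrapositive.

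The routine bookkeeping (the cardinal estimate and the finite intersection property) is straightforward; the one step deserving care is the emptiness argument, where it is essential that \emph{both} endpoints of the intervals vary over the gap. Fixing the infinite endpoint, for example, would leave that endpoint in every interval and the intersection would be non-empty; it is precisely the ability to ``press down'' from above with arbitrarily small positive infinite elements $x/2, x/4, \dots$ that rules out any common point. I expect this to be the main conceptual obstacle, while the remaining verifications are immediate. One could equally well use the gap between the positive infinitesimals and the positive non-infinitesimal finite elements, which is non-empty for the same reason; the argument is identical.
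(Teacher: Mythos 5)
Your proof is correct. The family $\mathcal C=\{[f,\ell]: f\in\mathcal F_+,\ \ell\in\mathcal L_+\}$ indeed has at most $\card(\mathbb K\times\mathbb K)=\card(\mathbb K)<\kappa$ members; the F.I.P. holds because every positive finite element lies below every positive infinitely large element, so a finite subfamily intersects in $[\max_i f_i,\min_i\ell_i]$; and the total intersection is empty, since any common point $x$ would satisfy $x\ge n$ for all $n\in\mathbb N$, hence $x\in\mathcal L_+$, and then $\ell=x/2\in\mathcal L_+$ forces $x\le x/2$, which is absurd. This establishes the contrapositive cleanly, and your side remark is also accurate: with the upper endpoint fixed, $\ell$ itself would lie in every interval, so it is essential that both endpoints range over the cut. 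The paper, by contrast, dispatches the theorem with a one-line pointer: it is ``essentially the proof of $(i)\Rightarrow(ii)$ in Theorem~\ref{T: Completeness of an Archimedean Field},'' where a finite element $x$ of a non-standard extension ${}^*\mathbb K$ is bracketed by the family $S=\{[a,b]: a,b\in\mathbb K \text{ and } a\le x\le b \text{ in } {}^*\mathbb K\}$, with the identical cardinality count $\card(S)\le\card(\mathbb K\times\mathbb K)=\card(\mathbb K)<\kappa$; Cantor $\kappa$-completeness then yields a point of $\mathbb K$ infinitesimally close to $x$, and non-Archimedeanness is contradicted along the lines of Theorem~\ref{T: completeness -> archimedean} (for instance by taking $x$ in the gap between $\mathbb N$ and $\mathcal L(\mathbb K)$, which is exactly the gap your family straddles). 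So the two arguments share the same engine -- a F.I.P. family of closed intervals of size $\card(\mathbb K)$ pinning a point into the finite/infinite cut -- but yours is genuinely more self-contained: it runs entirely inside $\mathbb K$, requires no non-standard extension, no Leibniz completeness, and no appeal to earlier theorems beyond the elementary Theorem~\ref{T: Archimedean Property}, and it makes explicit the empty-intersection verification that the paper leaves implicit. What the paper's formulation buys is brevity and reuse of machinery already built; what yours buys is an elementary, free-standing proof readable without any of the ${}^*\mathbb K$ apparatus.
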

\begin{proof}
   This is essentially the proof of $(i)\Rightarrow(ii)$ in Theorem~\ref{T: Completeness of an Archimedean Field}.
\end{proof}

	In the proof of the next result we borrow some arguments from a similar (unpublished) result due to Hans Vernaeve.

\begin{theorem}[Cofinality and Saturation]\label{T: Cofinality and Saturation}
  Let $\mathbb K$ be an ordered field and $\kappa$ be an uncountable cardinal. Then the following are equivalent:
  \begin{thm-enum}
    \item $\mathbb K$ is algebraically $\kappa$-saturated.
    \item $\mathbb K$ is Cantor $\kappa$-complete and $\cof(\mathbb K)\ge\kappa$.
  \end{thm-enum}
\end{theorem}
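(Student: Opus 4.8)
The plan is to prove both implications directly, using throughout the elementary reformulation of the finite intersection property: a family of intervals with left endpoints $\{a_\gamma\}$ and right endpoints $\{b_\gamma\}$ has the F.I.P. precisely when $a_\gamma \le b_\delta$ (for closed intervals), respectively $a_\gamma < b_\delta$ (for open intervals), holds for every pair of indices $\gamma,\delta$. The delicate direction will be recovering a \emph{strictly} interior point, and this is exactly where the hypothesis $\cof(\mathbb K)\ge\kappa$ enters.

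For $(i)\Rightarrow(ii)$ I would argue in two parts. To obtain Cantor $\kappa$-completeness, start from a family $\{[a_\gamma,b_\gamma]\}$ of fewer than $\kappa$ closed bounded intervals with the F.I.P. If some left endpoint equals some right endpoint, $a_\gamma=b_\delta$, that common value already lies in every interval and we are done; otherwise $a_\gamma<b_\delta$ for all pairs, so the cross-paired open intervals $\{(a_\gamma,b_\delta)\}_{\gamma,\delta}$ are nonempty, number fewer than $\kappa$ (since $|\Gamma|^2=|\Gamma|<\kappa$ when $\Gamma$ is infinite, the finite case being immediate), and inherit the F.I.P.; algebraic $\kappa$-saturation then yields a point $x$ with $a_\gamma<x<b_\delta$ for all pairs, hence $x\in\bigcap_\gamma[a_\gamma,b_\gamma]$. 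To obtain $\cof(\mathbb K)\ge\kappa$ I would argue by contraposition: if $\cof(\mathbb K)=\lambda<\kappa$, choose a positive cofinal set $\{c_\xi\}$ of size $\lambda$ and consider the open intervals $(0,1/c_\xi)$; these are fewer than $\kappa$ and have the F.I.P., yet any common point $x$ would make $1/x$ an upper bound of $\{c_\xi\}$, contradicting cofinality, so their intersection is empty and saturation fails.

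For $(ii)\Rightarrow(i)$, take a family $\{(a_\gamma,b_\gamma)\}$ of fewer than $\kappa$ open intervals with the F.I.P. Passing to the closures $[a_\gamma,b_\gamma]$ (which still number fewer than $\kappa$ and retain the F.I.P.), Cantor $\kappa$-completeness supplies a point $x_0$ with $a_\gamma\le x_0\le b_\gamma$ for all $\gamma$. If $x_0$ is interior to every interval we are finished; the only obstruction is that $x_0$ coincides with a common endpoint. Since $a_\gamma<b_\delta$ strictly for all pairs, $x_0$ cannot be simultaneously a left and a right endpoint, so without loss of generality $x_0=\max\{a_\gamma\}=:m$. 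It then remains to produce a point strictly between $m$ and every $b_\gamma$, that is, a positive $y$ with $y<b_\gamma-m$ for all $\gamma$. Here I would invoke cofinality: the set $\{1/(b_\gamma-m)\}$ has fewer than $\kappa\le\cof(\mathbb K)$ elements and is therefore bounded above by some $z>0$; then $y:=1/z$ satisfies $0<y<b_\gamma-m$ for every $\gamma$, so $x:=m+y$ lies in every $(a_\gamma,b_\gamma)$. The case $x_0=\min\{b_\gamma\}$ is symmetric.

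The main obstacle is the mismatch between the two notions: Cantor completeness is phrased with \emph{closed} intervals and saturation with \emph{open} ones, so a naive translation produces a candidate point that may sit exactly at an attained endpoint. The crux is recognizing that such an endpoint pins down $\max\{a_\gamma\}$ (or $\min\{b_\gamma\}$), and that $\cof(\mathbb K)\ge\kappa$ is precisely what lets us push the candidate strictly inside, via the reciprocal-and-boundedness argument. Conversely, the same reciprocal construction in the first implication shows that the cofinality hypothesis is genuinely indispensable.
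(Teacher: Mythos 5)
Your argument is sound in outline and, in the direction $(i)\Rightarrow(ii)$, essentially coincides with the paper's: the same case split on whether some left endpoint equals some right endpoint, followed by saturation applied to an open family (the paper uses the diagonal family $\{(a_\gamma,b_\gamma)\}$, whose F.I.P. follows from the strict cross inequalities, rather than your cross-paired family $\{(a_\gamma,b_\delta)\}$ -- a cosmetic difference), and for cofinality the paper argues directly that $\bigcap_{a\in A}(a,\infty)\neq\varnothing$ for any $A$ of size less than $\kappa$, whereas your contrapositive with the intervals $(0,1/c_\xi)$ has the small advantage of staying within bounded intervals, matching the letter of Definition~\ref{D: Completeness of a Totally Ordered Field}. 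The genuine divergence is in $(ii)\Rightarrow(i)$. The paper shrinks first: using $\cof(\mathbb K)\ge\kappa$ it finds a single $\rho>0$ with $\rho\le b_l-a_k$ for all pairs $l,k$, checks that the closed family $\{[a_\gamma+\frac{\rho}{2},\,b_\gamma-\frac{\rho}{2}]\}_{\gamma\in\Gamma}$ still has the F.I.P., and applies Cantor $\kappa$-completeness once, landing automatically inside every open interval. You intersect first and repair afterwards: you apply Cantor $\kappa$-completeness to the raw closures and invoke cofinality only in the degenerate case where the resulting point is an attained endpoint $m=\max_\gamma a_\gamma$. Your decomposition is a legitimate alternative, and it isolates rather clearly the single place where cofinality is truly indispensable.

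One step fails as literally written: from $1/(b_\gamma-m)\le z$ you may conclude only $y=1/z\le b_\gamma-m$, not the strict inequality you assert. If the upper bound $z$ happens to be attained, say $z=1/(b_{\gamma_1}-m)$ for some $\gamma_1$, then $x=m+y=b_{\gamma_1}$ sits on the boundary and does not belong to $(a_{\gamma_1},b_{\gamma_1})$. The repair is one line -- take $x=m+\frac{y}{2}$, or replace $z$ by the strict upper bound $z+1$ before inverting -- and it is worth noticing that the paper's halving of $\rho$ serves exactly this purpose: the inclusion $[a_\gamma+\frac{\rho}{2},\,b_\gamma-\frac{\rho}{2}]\subseteq(a_\gamma,b_\gamma)$ is strict precisely because only half of the guaranteed gap is consumed.
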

\begin{proof}
  \mbox{}
  \begin{description}
    \item[$(i)\Rightarrow(ii)$:] Let $\mathcal C=:\{[a_{\gamma},b_{\gamma}]\}_{\gamma\in \Gamma}$ and $\mathcal O=:\{(a_{\gamma}, b_{\gamma})\}_{\gamma\in \Gamma}$ be families of fewer than $\kappa$ bounded closed and open intervals, respectively, where $\mathcal C$ has the F.I.P.. If $a_k=b_p$ for some $k,p\in\Gamma$, then  $\bigcap_{\gamma\in \Gamma}[a_{\gamma},b_{\gamma}]=\{a_k\}$ by the F.I.P. in $\mathcal C$. Otherwise, $\mathcal O$ has the F.I.P.; thus, there exists $\alpha\in\bigcap_{\gamma\in \Gamma} (a_{\gamma}, b_{\gamma})\subseteq\bigcap_{\gamma\in \Gamma}[a_{\gamma}, b_{\gamma}]$ by algebraic $\kappa$-saturation. Hence $\mathbb K$ is Cantor $\kappa$-complete. To show that the cofinality of $\mathbb K$ is greater than or equal to $\kappa$, let $A\subset\mathbb K$ be a set with $\card(A)<\kappa$. Then $\bigcap_{a\in A}(a,\infty)\neq\emptyset$ by algebraic $\kappa$-saturation.

    \item[$(ii)\Rightarrow(i)$:] Let $\{(a_{\gamma},b_{\gamma})\}_{\gamma\in \Gamma}$ be a family of fewer than $\kappa$ elements with the F.I.P.. Without loss of generality, we can assume that each interval is bounded. As $\cof(\mathbb K)\ge\kappa$, there exists $\frac{1}{\rho} \in \ub(\{ \frac{1}{b_l -a_k} : l, k\in \Gamma \})$ (that is, $\frac{1}{b_l-a_k}\le\frac{1}{\rho}$ for all $l,k\in \Gamma$) which implies that $\rho>0$ and that $\rho$ is a lower bound of $\{b_l-a_k : l, k\in \Gamma\}$. Next, we show that the family $\{[a_\gamma+\frac{\rho}{2},b_\gamma-\frac{\rho}{2}]\}_{\gamma\in\Gamma}$ satisfies the F.I.P.. Let $\gamma_1,\ldots,\gamma_n\in\Gamma$ and $\zeta=:\max_{k\le n}\{a_{\gamma_k} + \frac{\rho}{2}\}$. Then, for all $m\in\mathbb N$ such that $m\le n$, we have $a_{\gamma_m} + \frac{\rho}{2}\le \zeta \le b_{\gamma_m} - \frac{\rho}{2}$ by the definition of $\rho$; thus, $\zeta\in[a_{\gamma_m}+\frac{\rho}{2}, b_{\gamma_m}-\frac{\rho}{2}]$ for $m\le n$. By Cantor $\kappa$-completeness, there exists $\alpha\in\bigcap_{\gamma\in\Gamma} [a_{\gamma}+\frac{\rho}{2}, b_{\gamma}-\frac{\rho}{2}]\subseteq\bigcap_{\gamma\in\Gamma}(a_{\gamma},b_{\gamma})$.
  \end{description}
\end{proof}

\begin{lemma}\label{L: sat -> seq}
  Let $\mathbb K$ be an ordered field. If $\mathbb K$ is algebraically saturated, then $\mathbb K$ is sequentially complete.
\end{lemma}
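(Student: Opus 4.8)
The plan is to reduce algebraic saturation to its consequence for the cofinality and then show that, under that consequence, every Cauchy sequence is in fact \emph{eventually constant} (and hence trivially convergent). First I would invoke Theorem~\ref{T: Cofinality and Saturation} with $\kappa=\aleph_1$: since $\mathbb K$ is algebraically saturated, i.e. algebraically $\aleph_1$-saturated, it is Cantor $\aleph_1$-complete and, crucially for us, $\cof(\mathbb K)\ge\aleph_1$. The only feature I actually need is the latter, which says that no countable subset of $\mathbb K$ is unbounded; by the symmetry $x\mapsto -x$ this means every countable subset of $\mathbb K$ is bounded both above and below.

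The key sub-step is that every countable subset of $\mathbb K_+$ admits a strictly positive lower bound. Given a countable $S\subseteq\mathbb K_+$, I would pass to the reciprocals $\{1/s:s\in S\}$, which is again countable and therefore bounded above by some $M$; since these reciprocals are positive we may take $M>0$, and then $1/s\le M$ gives $s\ge 1/M>0$ for every $s\in S$. Thus $1/M$ is the desired positive lower bound for $S$.

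Now let $(x_n)$ be a Cauchy sequence in $\mathbb K$ (Definition~\ref{D: Totally Ordered Rings}, with index set $\mathbb N$). Consider the countable set $D=\{\,|x_n-x_m|:n,m\in\mathbb N,\ x_n\neq x_m\,\}\subseteq\mathbb K_+$. If $D=\varnothing$ the sequence is constant and we are done; otherwise $D$ has a positive lower bound $\delta$ by the previous paragraph. Applying the Cauchy property with $\varepsilon=\delta$ yields an $N$ such that $|x_n-x_m|<\delta$ for all $n,m\ge N$. For such $n,m$ the inequality $|x_n-x_m|<\delta$ is incompatible with $|x_n-x_m|\in D$, so necessarily $x_n=x_m$; that is, $(x_n)$ is eventually constant, say equal to $L$ for $n\ge N$. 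An eventually constant sequence converges to its eventual value, so $x_n\to L$, and $\mathbb K$ is sequentially complete.

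The main obstacle --- or rather the single place where the hypothesis does real work --- is the positive-lower-bound step, where uncountable cofinality is converted into a uniform positive gap $\delta$ valid for all of the countably many differences at once; once that gap is in hand, the Cauchy condition collapses the tail of the sequence to a single value with no further effort. It is worth noting that this argument uses only $\cof(\mathbb K)\ge\aleph_1$ and not the Cantor-completeness half of saturation, so the lemma in fact holds under the weaker hypothesis that $\mathbb K$ has uncountable cofinality.
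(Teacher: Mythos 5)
Your proof is correct, and it takes a genuinely different (slightly more general) route than the paper's. The paper applies algebraic saturation \emph{directly}: it sets $\delta_n=|x_n-x_{n+1}|$ and observes that if $\{\delta_n\}$ were not eventually zero, a subsequence $\{\delta_{n_k}\}$ of positive terms would give a family $\{(0,\delta_{n_k})\}_k$ with the F.I.P., whence saturation produces a positive $\epsilon$ with $\epsilon<\delta_{n_k}$ for all $k$, contradicting $\delta_n\to 0$; so the sequence is eventually constant. You instead factor the argument through Theorem~\ref{T: Cofinality and Saturation}, keep only the conclusion $\cof(\mathbb K)\ge\aleph_1$, and convert uncountable cofinality into a uniform positive lower bound $\delta$ for the countable set $D$ of nonzero differences, after which the Cauchy condition with $\varepsilon=\delta$ forces the tail to collapse. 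The two proofs share the same engine --- a countable family of positive gaps admits a uniform positive lower bound, so Cauchy sequences are eventually constant --- but yours makes explicit that saturation enters only through cofinality (indeed, the paper's intersection $\bigcap_k(0,\delta_{n_k})\ne\varnothing$ is precisely the statement that the countable set $\{1/\delta_{n_k}\}$ is bounded). Your closing observation that uncountable cofinality alone suffices is a real sharpening, and it also illuminates the paper's architecture: the companion Lemma~\ref{L: cantor -> sequential} handles exactly the complementary case $\cof(\mathbb K)=\aleph_0$ (where an unbounded increasing sequence $\{1/\epsilon_n\}$ exists and Cantor completeness must do the work), so together the two lemmas split Theorem~\ref{T: non-arch completeness} along the dichotomy your remark isolates. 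The cost of your route is a dependence on Theorem~\ref{T: Cofinality and Saturation}, whereas the paper's proof is self-contained from the definition of saturation; the benefit is the weaker hypothesis and a cleaner conceptual statement.
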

\begin{proof}
   Let $\{x_n\}$ be a Cauchy sequence in $\mathbb K$. Define $\{\delta_n\}$ by $\delta_n=|x_n-x_{n+1}|$. If $\{\delta_n\}$ is not eventually constant, then there is a subsequence $\{\delta_{n_k}\}$ such that $\delta_{n_k}>0$ for all $k$; however, this yields $\bigcap_k (0,\delta_{n_k})\neq\varnothing$, which contradicts $\delta_n\to 0$. Therefore $\{\delta_n\}$ is eventually zero so that $\{x_n\}$ is eventually constant.
\end{proof}

\begin{lemma}\label{L: cantor -> sequential}
  Let $\mathbb K$ be an ordered field. If $\mathbb K$ is Cantor complete, but not algebraically saturated, then $\mathbb K$ is sequentially complete.
\end{lemma}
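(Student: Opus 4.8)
The plan is to first reduce the statement to a fact about cofinality, and then to carry out the familiar nested-interval construction of a limit, this time using Cantor $\aleph_1$-completeness in place of Dedekind completeness.

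First I would extract the essential consequence of the hypotheses from Theorem~\ref{T: Cofinality and Saturation}. Taking $\kappa=\aleph_1$ there, $\mathbb K$ is algebraically saturated (i.e. algebraically $\aleph_1$-saturated) if and only if it is Cantor complete and $\cof(\mathbb K)\ge\aleph_1$. Since we are assuming that $\mathbb K$ is Cantor complete but \emph{not} algebraically saturated, the contrapositive forces $\cof(\mathbb K)<\aleph_1$; as $\mathbb K$ is an unbounded ordered field we always have $\cof(\mathbb K)\ge\aleph_0$, so in fact $\cof(\mathbb K)=\aleph_0$. This is the crux of the argument, and arguably the main obstacle: everything hinges on recognizing that ``not algebraically saturated'' is exactly what rules out uncountable cofinality, so that one is left only with genuine (not eventually constant) Cauchy sequences, which Cantor completeness can handle. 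Contrast this with Lemma~\ref{L: sat -> seq}, where saturation forced Cauchy sequences to be eventually constant.

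Next I would fix a countable witness to countable cofinality. Choosing an unbounded sequence in $\mathbb K$ and passing to reciprocals produces a sequence $\{\epsilon_k\}$ of positive elements that is coinitial in $\mathbb K_+$, meaning that for every $\varepsilon\in\mathbb K_+$ some $\epsilon_k<\varepsilon$; after thinning I may assume $\epsilon_{k+1}\le\epsilon_k/2$. This step is where non-Archimedeanness matters: one cannot use the rationals $1/k$ as a scale of ``small'' quantities, and the coinitial sequence $\{\epsilon_k\}$ supplied by $\cof(\mathbb K)=\aleph_0$ is the correct replacement.

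Finally, given a Cauchy sequence $\{x_n\}$, for each $k$ I would pick an increasing $N_k$ with $|x_n-x_m|<\epsilon_k$ for all $n,m\ge N_k$ and set $I_k=[x_{N_k}-2\epsilon_k,\,x_{N_k}+2\epsilon_k]$. A short estimate using $|x_{N_{k+1}}-x_{N_k}|<\epsilon_k$ together with $2\epsilon_{k+1}\le\epsilon_k$ shows $I_{k+1}\subseteq I_k$, so $\{I_k\}$ is a nested sequence of bounded closed intervals. Cantor completeness then yields a point $L\in\bigcap_k I_k$, and for $\varepsilon\in\mathbb K_+$, choosing $k$ with $3\epsilon_k<\varepsilon$ gives $|x_n-L|\le|x_n-x_{N_k}|+|x_{N_k}-L|<\epsilon_k+2\epsilon_k<\varepsilon$ for all $n\ge N_k$; hence $x_n\to L$ and $\mathbb K$ is sequentially complete. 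The remaining work is purely the routine verification of the nesting and of the final estimate.
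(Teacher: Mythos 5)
Your proposal is correct and follows essentially the same route as the paper: both invoke Theorem~\ref{T: Cofinality and Saturation} to conclude that $\cof(\mathbb K)=\aleph_0$, extract from this a coinitial sequence $\{\epsilon_k\}$ in $\mathbb K_+$, enclose tails of the Cauchy sequence in closed intervals of radius comparable to $\epsilon_k$, and apply Cantor completeness to obtain the limit $L$. The only cosmetic difference is that you thin $\{\epsilon_k\}$ and double the radii to make the intervals genuinely nested, whereas the paper verifies the finite intersection property directly for the family $S_n=[x_{m_n}-\epsilon_n,\,x_{m_n}+\epsilon_n]$, which suffices because Cantor completeness is Cantor $\aleph_1$-completeness.
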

\begin{proof}
   By Theorem~\ref{T: Cofinality and Saturation}, we know there exists an unbounded increasing sequence $\{\frac{1}{\epsilon_n}\}$. Let $\{x_n\}$ be a Cauchy sequence in $\mathbb K$. For all $n\in\mathbb N$, we define $S_n=:[x_{m_n} - \epsilon_n, x_{m_n} + \epsilon_n]$, where $m_n\in\mathbb N$ is the minimal element such that $l,j\ge m_n$ implies $|x_l-x_j|<\epsilon_n$. Let $A\subset \mathbb N$ be finite and $\rho=:\max(A)$; then we observe that $x_{m_{\rho}}\in S_k$ for any $k\in A$ because $m_k\le m_{\rho}$; hence $\{S_n\}$ satisfies the F.I.P.. Therefore there exists $L\in\bigcap_{k=1}^{\infty}S_k$ by Cantor completeness. It then follows that $x_n\to L$ since $\{\epsilon_n\}\to0$.
\end{proof}

\begin{theorem}\label{T: non-arch completeness}
  Let $\mathbb K$ be an ordered field, then we have the following implications
 \[
\mathbb K\textrm{ is }\kappa\textrm{-saturated}\Rightarrow\mathbb K\textrm{ is Cantor }\kappa\textrm{-complete}\Rightarrow \mathbb K\textrm{ is sequentially complete}.
\]
\end{theorem}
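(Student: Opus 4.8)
The plan is to assemble the theorem entirely from the machinery already developed, handling its two implications separately. For the first implication, that $\kappa$-saturation entails Cantor $\kappa$-completeness, I would appeal directly to Theorem~\ref{T: Cofinality and Saturation}: its $(i)\Rightarrow(ii)$ direction shows that an algebraically $\kappa$-saturated field is simultaneously Cantor $\kappa$-complete \emph{and} has cofinality at least $\kappa$. I simply retain the first conclusion and discard the statement about cofinality. (Throughout I take $\kappa$ to be an uncountable cardinal, as is required for Cantor $\kappa$-completeness even to be defined in item~(1) of Definition~\ref{D: Completeness of a Totally Ordered Field}.)

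For the second implication, that Cantor $\kappa$-completeness entails sequential completeness, the first step is a reduction to the countable case. Since $\kappa$ is uncountable, every \emph{countable} family of closed bounded intervals is in particular a family of fewer than $\kappa$ such intervals; hence a Cantor $\kappa$-complete field is automatically Cantor $\aleph_1$-complete, i.e.\ \emph{Cantor complete} in the sense of Definition~\ref{D: Completeness of a Totally Ordered Field}. This is the one place where a little care is needed, though it amounts only to unwinding the meaning of ``fewer than $\kappa$'' when $\kappa \ge \aleph_1$.

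With Cantor completeness in hand, I would finish by a dichotomy on whether $\mathbb K$ is algebraically saturated. If it is, Lemma~\ref{L: sat -> seq} delivers sequential completeness at once. If it is not, then $\mathbb K$ is Cantor complete but not algebraically saturated, which is precisely the hypothesis of Lemma~\ref{L: cantor -> sequential}, so sequential completeness follows again. Since the two cases are exhaustive, the conclusion holds unconditionally.

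I do not expect any genuine obstacle: the analytic content has been front-loaded into Theorem~\ref{T: Cofinality and Saturation} and Lemmas~\ref{L: sat -> seq} and~\ref{L: cantor -> sequential}. The only point to check carefully is that the case split into ``saturated'' and ``not saturated'' is matched \emph{exactly} by the hypotheses of the two lemmas, so that no intermediate regime is left uncovered; verifying this bookkeeping is the whole of the argument.
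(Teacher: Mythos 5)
Your proposal is correct and follows essentially the same route as the paper: the first implication is read off from Theorem~\ref{T: Cofinality and Saturation}, and the second is handled by the dichotomy on algebraic saturation via Lemmas~\ref{L: sat -> seq} and~\ref{L: cantor -> sequential}. Your explicit remark that Cantor $\kappa$-completeness (for uncountable $\kappa$) implies Cantor $\aleph_1$-completeness is a point the paper leaves tacit, but it is exactly the bookkeeping the paper's proof relies on.
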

\begin{proof}
   The first implication follows from Theorem~\ref{T: Cofinality and Saturation}. For the second we have two cases depending on whether $\mathbb K$ is algebraically saturated, which are handled by Lemmas~\ref{L: sat -> seq} and Lemma~\ref{L: cantor -> sequential}.
\end{proof}


\section{Examples of Non-Archimedean Fields}\label{S: Examples of Non-Archimedean Fields}

	In this section we present several examples of non-Archimedean fields which, on one hand, illustrate some of the results from the previous sections, but on the other prepare us for the discussion of the history of calculus in the next section. This section alone -- with the help perhaps, of Section~\ref{S: Preliminaries: Ordered Rings and Fields} -- might be described as ``the shortest introduction to non-standard analysis ever written'' and for some readers might be an ``eye opener.'' 

	If $X$ and $Y$ are two sets, we denote by $Y^X$ the set of all functions from $X$ to $Y$. In particular, $Y^\mathbb N$ stands for the set of all sequences in $Y$. We use the notation $\mathcal{P}(X)$ for the power set of $X$. To simplify the following discussion, we shall adopt the GCH (Generalized Continuum Hypothesis) in the form $2^{\aleph_\alpha}=\aleph_{\alpha+1}$ for all ordinals $\alpha$, where $\aleph_\alpha$ are the cardinal numbers. Also, we let $\card(\mathbb N)=\aleph_0$ and $\card(\mathbb R) =\frak{c}\, (=\aleph_1)$ and we shall use ${\frak{c}^+}\, (=\aleph_2)$ for the successor of $\frak{c}$. Those readers who do not like cardinal numbers are advised to ignore this remark. 

In what follows, $\mathbb{K}$ is a totally ordered field (Archimedean or not) with cardinality $\card(\mathbb K)=\kappa$ and cofinality ${\rm cof}(\mathbb K)$ (Definition~\ref{D: Totally Ordered Rings}). Let $\mathbb K(t)$ denotes the field of the rational functions (Example~\ref{Ex: Field of Rational Functions}). Then we have the following examples of non-Archimedean fields:
 \begin{equation}\label{E: Chain 1}
\mathbb K\subset \mathbb K(t)\subset \mathbb K(t^{\mathbb Z})\subset \mathbb K\langle t^{\mathbb R}\rangle\subset \mathbb K((t^{\mathbb R})),
\end{equation}
where:
\begin{ex-enum}

\item The field of \emph{Hahn series with coefficients in $\mathbb K$ and valuation group} $\mathbb R$ is defined to be the set 
\[
\mathbb K((t^{\mathbb R}))=: \Big\{S=\sum_{r\in\mathbb R} a_rt^r : a_r\in\mathbb K\emph{ and } \supp(S) \emph{ is a well ordered set}\Big\},
\]
where $\supp(S)=\{r\in\mathbb R: a_r\not=0\}$. We supply $\mathbb K((t^{\mathbb R}))$ (denoted sometimes by $\mathbb K(\mathbb R)$) with the usual polynomial-like addition and multiplication and the \emph{canonical valuation} $\nu:\mathbb K((t^{\mathbb R}))\to\mathbb R\cup\{\infty\}$ defined by $\nu(0)=:\infty$ and $\nu(S)=:\min(\supp(S))$ for all $S\in\mathbb K((t^{\mathbb R})), S\not= 0$. As well, $\mathbb K((t^{\mathbb R}))$ has a natural ordering given by 
\[
\mathbb K((t^{\mathbb R}))_+=:\Big\{S=\sum_{r\in\mathbb R} a_rt^r : a_{\nu(S)}>0\Big\}.
\]

\item The field of Levi-Civita series is defined to be the set 
\[
\mathbb K\langle t^{\mathbb R}\rangle=:\Big\{\sum_{n=0}^{\infty}a_nt^{r_n} : a_n\in\mathbb K \emph{ and } (r_n)\in\mathbb R^\mathbb N\Big\},
\]
where the sequence $(r_n)$ is required to be strictly increasing and unbounded.

\item $\mathbb K(t^{\mathbb Z})=:\Big\{\sum_{n=m}^{\infty}a_nt^n : a_n\in\mathbb K \emph{ and }m\in\mathbb Z\Big\}$ is the field of formal \emph{Laurent series} with coefficients in $\mathbb K$.

Both $\mathbb K(t^{\mathbb Z})$ and $\mathbb K\langle t^{\mathbb R}\rangle$ are supplied with algebraic operations, ordering and valuation inherited from $\mathbb K((t^{\mathbb R}))$.

\item Since $\mathbb K(t)$ is Non-Archimedean (Example~\ref{Ex: Field of Rational Functions}), so are the fields   $\mathbb K(t^\mathbb Z)$, $\mathbb K\bra t^\mathbb R\ket$ and $\mathbb K((t^\mathbb R))$. Here is an example for a positive infinitesimal, $\sum_{n=0}^{\infty}n!\, t^{n+1/3}$, and a positive infinitely large number, $\sum_{n=-1}^{\infty} t^{n+1/2}=\frac{\sqrt{t}}{t-t^2}$, both in $\mathbb K\bra t^\mathbb R\ket$. If $\mathbb K$ is real closed, then both $\mathbb K\langle t^{\mathbb R}\rangle$ and $\mathbb K((t^{\mathbb R}))$ are real closed (Prestel~\cite{aPrestel}). The cofinality of each of the fields $\mathbb K(t^{\mathbb Z})$, $\mathbb K\langle t^{\mathbb R}\rangle$, and $\mathbb K((t^{\mathbb R}))$ is $\aleph_0=\card(\mathbb N)$, because the sequence $(1/t^n)_{n\in\mathbb N}$ is unbounded in each of them. The field $\mathbb{K}((t^{\mathbb{R}}))$ is spherically complete by (Krull~\cite{krull} and Luxemburg~\cite{wLux76}, Theorem~2.12). Consequently, the fields  $\mathbb K(t^{\mathbb Z})$, $\mathbb K\langle t^{\mathbb R}\rangle$, and $\mathbb K((t^{\mathbb R}))$ are Cauchy (and sequentially) complete. Neither of these fields is necessarily Cantor complete or saturated. If $\mathbb K$ is Archimedean, these fields are certainly not Cantor complete. The fact that the series $\mathbb{R}(t^{\mathbb Z})$ and $\mathbb{R}\langle t^{\mathbb R}\rangle$ are sequentially complete was also proved independently in (Laugwitz~\cite{Laugwitz}). The field $\mathbb{R}\langle t^{\mathbb R}\rangle$ was introduced by
Levi-Civita in \cite{LC} and later was investigated by D. Laugwitz in
\cite{Laugwitz} as a potential framework for the rigorous foundation of
infinitesimal calculus before the advent of Robinson's nonstandard
analysis. It is also an example of a real-closed valuation field that is
sequentially complete, but not spherically complete (Pestov~\cite{vPestov}, p. 67).

\item Let $^*\mathbb K=\mathbb K^{\mathbb N}/{\sim}$ be a non-standard extension of $\mathbb K$. Here $\mathbb K^\mathbb N$ stands for the ring of all sequences in $\mathbb K$ and $\sim$ is an equivalence relation on $\mathbb K^\mathbb N$ defined in terms of a (fixed) free ultrafilter $\mathcal U$ on $\mathbb N$: $(a_n)\sim (b_n)$ if $\{n\in \mathbb N: a_n=b_n\}\in\mathcal{U}$.  We denote by $\bra a_n\ket\in{^*\mathbb K}$ the equivalence class of the sequence $(a_n)\in{\mathbb K^\mathbb N}$. The \emph{non-standard extension} of a set $S\subseteq\mathbb K$ is defined by $^*S=\big\{\bra a_n\ket\in{^*\mathbb K}: \{n\in \mathbb N: a_n\in S\}\in\mathcal{U}\big\}$.  It turns out that $^*\mathbb K$ is a  algebraically saturated ($\frak{c}$-saturated) ordered field which contains a copy of $\mathbb K$ by means of the constant sequences. The latter implies $\card(^*\mathbb K)\ge \frak{c}$ by Theorem~\ref{T: Cardinality and Cantor Completeness} and ${\rm cof}(^*\mathbb K)\geq\frak{c}$ by Theorem~\ref{T: Cofinality and Saturation}. It can be proved that $^*\mathbb N$ is unbounded from above in $^*\mathbb K$, i.e. $\bigcap_{n\in{^*\mathbb N}}(n, \infty)=\varnothing$. The latter implies $\card(^*\mathbb N)\geq\frak{c}$ by the $\frak{c}$-saturation of $^*\mathbb K$ and thus ${\rm cof}(^*\mathbb K)\le\card(^*\mathbb N)$. Finally, $^*\mathbb K$ is real closed if and only if $\mathbb K$ is real closed. If $r\in\mathbb K, r\not=0$, then  $\bra 1/n\ket, \bra r+1/n\ket, \bra n\ket$ present examples for a positive infinitesimal, finite (but non-infinitesimal) and infinitely large elements in $^*\mathbb K$, respectively.  Let $X\subseteq\mathbb K$ and $f: X\to\mathbb K$ be a real function. We defined its \emph{non-standard extension} $^*f: {^*X}\to{^*\mathbb K}$ by the formula $^*f(\bra x_n\ket)=\bra f(x_n)\ket$. It is clear that $^*f\rest X=f$, hence we can sometimes skip the asterisks to simplify our notation. Similarly we define $^*f$ for functions $f: X\to\mathbb K^q$, where $X\subseteq\mathbb K^p$ (noting that $^*(\mathbb K^p)={(^*\mathbb K)^p}$). Also if $f\subset \mathbb K^p\times \mathbb K^q$ is a function, then $^*f\subset{^*\mathbb K^p}\times{^*\mathbb K^q}$ is as well. For the details and missing proofs of this and other results we refer to any of the many excellent introductions to non-standard analysis, e.g.  Lindstr\o m~\cite{lindstrom}, Davis~\cite{davis}, Capi\'{n}ski \& Cutland ~\cite{CapinskiCutland95} and Cavalcante~\cite{cavalcante}.

\begin{remark}[Free Ultrafilter] 
   Recall that $\mathcal{U}\subset\mathcal{P}(\mathbb N)$ is a \emph{free ultrafilter} on $\mathbb N$ if: (a) $\varnothing\notin\mathcal{U}$; (b) $\mathcal{U}$ is closed under finitely many intersections; (c) If $A\in\mathcal{U}$ and $B\subseteq\mathbb N$, then $A\subseteq B$ implies $B\in\mathcal{U}$; (d) $\bigcap_{A\in\mathcal{U}}\, A=\varnothing$; (e) For every $A\subseteq\mathbb N$ either $A\in\mathcal{U}$, or $\mathbb N\setminus A\in\mathcal{U}$. Recall as well that the existence of free ultrafilters follows from the axiom of choice (Zorn's lemma). For more details we refer again to (Lindstr\o m~\cite{lindstrom}).
\end{remark}

\begin{theorem}[Leibniz Transfer Principle]\label{T: Leibniz Transfer Principle} 
   Let $\mathbb K$ be an ordered Archimedean field and $p, q\in\mathbb N$. Then $S$ is the solution set of the system
   \begin{equation}\label{E: System in K}
   \begin{cases}
   f_i(x)=F_i(x), &i=1, 2, \dots, n_1,\\
   g_j(x)\not= G_j(x),&j=1, 2, \dots, n_2,\\
   h_k(x)\leq H_k(x), &k=1, 2, \dots, n_3,
   \end{cases}
   \end{equation}
   if and only if $^*S$ is the solution set of the system of equations and inequalities
   \begin{equation}\label{E: System in *K}
   \begin{cases}
   ^*f_i(x)={^*}F_i(x), &i=1, 2, \dots, n_1,\\
   ^*g_j(x)\not={^*}G_j(x), &j=1, 2, \dots, n_2,\\
   ^*h_k(x)\leq {^*}H_k(x), &k=1, 2, \dots, n_3.
   \end{cases}
   \end{equation}
Here $f_i, F_i, g_j, G_j\subset \mathbb K^p\times \mathbb K^q$ and  $h_k, H_k\subset \mathbb K^p\times \mathbb K$ are functions in $p$-variables and $n_1, n_2, n_3\in\mathbb{N}_0$ (if $n_1=0$, then $f_i(x)=F_i(x)$ will be missing in (\ref{E: System in K}) and similarly for the rest). The quantifier ``for all'' is over $d, p, q$ and all functions involved.
\end{theorem}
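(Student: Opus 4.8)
The plan is to prove the equivalence pointwise: I would fix an arbitrary $\xi\in{^*\mathbb K^p}$ and show that $\xi\in{^*S}$ if and only if $\xi$ satisfies the starred system~(\ref{E: System in *K}). Since the construction in Example~5 gives $^*(\mathbb K^p)=(^*\mathbb K)^p$, every such $\xi$ can be written as $\xi=\langle x_n\rangle$ for some sequence $(x_n)$ in $\mathbb K^p$. The whole argument then amounts to translating each condition of the system into a statement about the set of indices on which it holds in $\mathbb K$, and invoking the defining properties of the free ultrafilter $\mathcal U$ (from the Free Ultrafilter Remark). This is nothing more than {\L}o\'s's theorem for the quantifier-free formulas appearing in~(\ref{E: System in K}), but I would derive it directly from the ultrapower construction rather than quote the model-theoretic machinery, since the variable $x$ is free and no quantifier manipulation is needed.

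First I would record three ``atomic'' transfer equivalences, each for a fixed $\xi=\langle x_n\rangle$. For the equations, using $^*f_i(\xi)=\langle f_i(x_n)\rangle$ and $^*F_i(\xi)=\langle F_i(x_n)\rangle$ together with the definition of equality in the ultrapower, $^*f_i(\xi)={^*}F_i(\xi)$ holds iff $\{n : f_i(x_n)=F_i(x_n)\}\in\mathcal U$; in the vector-valued case $q>1$ this uses closure of $\mathcal U$ under finite intersections (property (b)) to pass from coordinatewise equality to equality of the $\mathbb K^q$-values. For the order relations, $^*h_k(\xi)\leq{^*}H_k(\xi)$ holds iff $\{n : h_k(x_n)\leq H_k(x_n)\}\in\mathcal U$, directly from the way the order on $^*\mathbb K$ is induced by $\mathcal U$. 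For the negated equations I would use that $\mathcal U$ is an ultrafilter, not merely a filter: $^*g_j(\xi)\neq{^*}G_j(\xi)$ iff $\{n : g_j(x_n)=G_j(x_n)\}\notin\mathcal U$ iff, by maximality (property (e)), $\{n : g_j(x_n)\neq G_j(x_n)\}\in\mathcal U$.

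With these in hand, I would set $D=\{n\in\mathbb N : x_n\in S\}$, so that by the definition of the nonstandard extension of a set, $\xi\in{^*S}$ iff $D\in\mathcal U$. Since $S$ is the solution set of~(\ref{E: System in K}), the set $D$ is exactly the intersection, over all $i,j,k$, of the three families of index sets above. For the ``if'' direction, if $\xi$ satisfies~(\ref{E: System in *K}) then each of the finitely many index sets lies in $\mathcal U$, and closure under finite intersections gives $D\in\mathcal U$, i.e.\ $\xi\in{^*S}$. For the ``only if'' direction, if $\xi\in{^*S}$ then $D\in\mathcal U$, and since $D$ is contained in each of the individual index sets, upward closure (property (c)) puts each of them in $\mathcal U$, so by the atomic equivalences $\xi$ satisfies every condition of the starred system.

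The only genuinely delicate points, as opposed to routine bookkeeping, are that all the objects involved ($^*S$, the starred functions $^*f_i$ and so on, and the equality and order on $^*\mathbb K$) are well defined on $\mathcal U$-equivalence classes and independent of the chosen representative $(x_n)$ -- which again reduces to intersection and upward closure -- together with the use of ultrafilter maximality for the strict (negated) conditions, which is the one place where a mere filter would not suffice. I expect the main obstacle to be purely expository: stating the coordinatewise-to-vector reduction for the equality conditions cleanly and making explicit exactly which ultrafilter axiom each step consumes.
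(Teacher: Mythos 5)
Your ultrapower argument is, in substance, the right one. Note first that the paper itself offers no proof of this theorem: it is stated inside Example~5 and the details are delegated to the standard references (Lindstr\o m, Davis, etc.), so what you have written out is exactly the quantifier-free instance of {\L}o\'s's theorem that those references prove. Your three atomic equivalences are correct; your accounting of which ultrafilter axiom is consumed where (finite intersections for the coordinatewise-to-vector reduction and for assembling $D$, upward closure for the reverse inclusion, maximality (e) exactly once, for the negated equations) is accurate; and the well-definedness of $^*S$, $^*f_i$, and the order on representatives is properly flagged.

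There is, however, one genuine omission: the theorem is a biconditional between two global statements, and your proof as executed establishes only one implication. When you identify $D=\{n\in\mathbb N : x_n\in S\}$ with the intersection of the index sets, you invoke the hypothesis that $S$ is the solution set of (\ref{E: System in K}); so what your pointwise lemma delivers is precisely ``$S$ solves (\ref{E: System in K}) $\Rightarrow$ $^*S$ solves (\ref{E: System in *K})''. The converse -- if $^*S$ is the solution set of (\ref{E: System in *K}), then $S$ is the solution set of (\ref{E: System in K}) -- is never addressed and does not follow formally from your lemma without one more step. It is short but needs saying: for a standard point $a\in\mathbb K^p$ one has $^*f_i(a)=f_i(a)$, etc.\ (the paper records $^*f\!\upharpoonright\! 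X=f$), and the embedding $\mathbb K\hookrightarrow{^*\mathbb K}$ preserves and reflects $=$, $\neq$ and $\leq$; hence the solution set of (\ref{E: System in *K}) meets $\mathbb K^p$ exactly in the solution set of (\ref{E: System in K}), and combining this with $^*S\cap\mathbb K^p=S$ (constant sequences; $\mathbb N\in\mathcal U$ and $\varnothing\notin\mathcal U$) closes the gap. Equivalently, apply your forward implication to the true solution set $T$ of (\ref{E: System in K}) to get $^*T={^*S}$ and then $S={^*S}\cap\mathbb K^p={^*T}\cap\mathbb K^p=T$. A second, smaller bookkeeping point: the paper allows partial functions ($f_i\subset\mathbb K^p\times\mathbb K^q$), so ``satisfies the system'' must include membership in the relevant domains; this folds into the same index-set argument via $\{n : x_n\in\dom(f_i)\}$ and $\dom({^*f_i})={^*(\dom(f_i))}$, but your write-up should make it explicit.
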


\item Let  $^*\mathbb R$ be the non-standard extension of $\mathbb R$ (the previous example for $\mathbb K=\mathbb R$). The elements of $^*\mathbb R$ are known as \emph{non-standard real numbers}. $^*\mathbb R$ is a real closed algebraically saturated ($\frak{c}$-saturated) field in the sense that every nested sequence of open intervals in $^*\mathbb R$ has a non-empty intersection.  Also, $\mathbb R$ is embedded as an ordered subfield of ${^*\mathbb R}$ by means of the constant sequences. We have $\card(^*\mathbb R)=\frak{c}$ (which means that $^*\mathbb R$ is fully saturated). Indeed, in addition to $\card(^*\mathbb R)\ge\frak{c}$ (see above), we have  $\card(^*\mathbb R)\le \card(\mathbb R^\mathbb N)= (2^{\aleph_0})^{\aleph_0}=2^{(\aleph_0)^2}=2^{\aleph_0}=\frak{c}$. We also have ${\rm cof}(^*\mathbb R)=\frak{c}$. Indeed, in addition to  ${\rm cof}(^*\mathbb R)\geq\frak{c}$ (see above) we have ${\rm cof}(^*\mathbb R)\le\card(^*\mathbb R)=\frak{c}$. Here are several important results of non-standard analysis:

\begin{theorem}[Leibniz Completeness Principle] $\mathbb R$ is \emph{Leibniz complete} in the sense that every finite number $x\in{^*\mathbb R}$ is infinitely close to some (necessarily unique) number $r\in\mathbb R$ (\# 2 of Definition~\ref{D: Completeness of a Totally Ordered Field}). We define the standard part mapping $\st: \mathcal{F}(^*\mathbb R)\to\mathbb R$ by $\st(x)=r$. 
\end{theorem}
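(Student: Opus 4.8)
The plan is to give the direct standard-part construction, which simultaneously establishes existence and uniqueness of the decomposition and exhibits the map $\st$ explicitly. One could instead simply invoke the implication $(ix)\Rightarrow(ii)$ of Theorem~\ref{T: Completeness of an Archimedean Field}, since $\mathbb R$ is by definition Dedekind complete and hence Leibniz complete; but the explicit construction is more transparent in the non-standard setting and furnishes the defining formula for $\st$.

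First I would fix a finite $x\in\mathcal F(^*\mathbb R)$ and form the set $A=:\{q\in\mathbb R : q<x \text{ in } ^*\mathbb R\}$, where $\mathbb R$ is viewed as an ordered subfield of $^*\mathbb R$ (so that the ordering of $^*\mathbb R$ restricts to that of $\mathbb R$). Because $x$ is finite, there is some $n\in\mathbb N$ with $|x|\le n$; hence $-n\in A$ and $n\in\ub(A)$, so $A$ is non-empty and bounded above in $\mathbb R$. The Dedekind completeness of $\mathbb R$ then yields $r=:\sup_\mathbb R(A)$.

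Next I would show $x-r\approx 0$ by contradiction, along exactly the lines of the proof of $(i)\Rightarrow(ii)$ in Theorem~\ref{T: Completeness of an Archimedean Field}. Suppose $\frac{1}{n}<|x-r|$ for some $n\in\mathbb N$; then either $x>r+\frac{1}{n}$ or $x<r-\frac{1}{n}$. In the first case $r+\frac{1}{2n}\in A$ even though $r+\frac{1}{2n}>r$, contradicting that $r$ is an upper bound of $A$; in the second case $r-\frac{1}{2n}\in\ub(A)$ even though $r-\frac{1}{2n}<r$, contradicting that $r$ is the \emph{least} upper bound of $A$. Hence $x=r+dx$ with $dx=:x-r\in\mathcal I(^*\mathbb R)$.

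For uniqueness, if $x=r+dx=s+dy$ with $r,s\in\mathbb R$ and $dx,dy\in\mathcal I(^*\mathbb R)$, then $r-s=dy-dx\in\mathcal I(^*\mathbb R)\cap\mathbb R=\{0\}$, the last equality holding because $\mathbb R$ is Archimedean (Theorem~\ref{T: Archimedean Property}); thus $r=s$ and the representation is unique, which legitimizes setting $\st(x)=r$. There is no substantive obstacle in this argument: the only two points demanding attention are that the finiteness of $x$ is precisely what guarantees $A$ is bounded above (so the supremum exists), and that $0$ is the sole infinitesimal of the Archimedean field $\mathbb R$ (so the standard part is well defined).
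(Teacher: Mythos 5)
Your construction is correct, but it takes a genuinely different route from the paper's. The paper states the Leibniz Completeness Principle in Example~6 without a self-contained proof: internally it is a corollary of Theorem~\ref{T: Completeness of an Archimedean Field}, where the passage from Dedekind completeness \emph{(ix)} to Leibniz completeness \emph{(ii)} runs through Hilbert completeness \emph{(x)} and Cantor $\kappa$-completeness \emph{(i)}; in $(i)\Rightarrow(ii)$ the standard part $r$ is produced as a point of $\bigcap_{[a,b]\in S}[a,b]$ for the family $S$ of all closed intervals $[a,b]$ with $a,b\in\mathbb K$ and $a\le x\le b$ in $^*\mathbb K$, using Cantor $\kappa$-completeness with $\kappa$ the successor of $\card(\mathbb K)$. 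You instead extract $r$ directly as $\sup_{\mathbb R}\{q\in\mathbb R : q<x\}$, which is essentially the argument the paper attributes to Keisler (the remark following Theorem~\ref{T: Completeness of an Archimedean Field} notes that the equivalence of \emph{(ii)} and \emph{(ix)} is proved in Keisler ``with somewhat different arguments''). The trade-off: the paper's detour through \emph{(i)} keeps the ten-statement cycle intact and yields the stronger Cantor $\kappa$-completeness along the way, while your direct supremum construction is shorter, self-contained, and exhibits the defining formula for $\st$ explicitly; your contradiction argument for $|x-r|<1/n$ mirrors the paper's own, and your uniqueness step via $\mathcal I(^*\mathbb R)\cap\mathbb R=\{0\}$ (Theorem~\ref{T: Archimedean Property}) supplies a detail the paper leaves implicit in its definition of Leibniz completeness. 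One trivial slip to repair: from $|x|\le n$ you cannot conclude $-n\in A$, since possibly $x=-n$ and the inequality defining $A$ is strict; take $-(n+1)\in A$ instead (and note $n\in\ub(A)$ still holds). Everything else goes through verbatim.
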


\begin{theorem}[Leibniz Derivative]\label{T: Leibniz Derivative}  Let  $X\subseteq\mathbb R$ and $r$ be a non-trivial adherent (cluster) point of $X$. Let $f: X\to\mathbb R$ be a real function and $L\in\mathbb R$. Then
$\lim_{x\to r} f(x)  =  L$ if and only if $^*f(r + dx) \approx L$ for all non-zero infinitesimals $dx\in\mathbb R$ such that $r+dx\in{^*X}$. In the latter case we have $\lim_{x\to r} f(x) = \st(^*f(r + dx))$. 
\end{theorem}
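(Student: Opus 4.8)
The plan is to prove both implications by reducing the $\varepsilon$--$\delta$ definition of the limit to statements about the emptiness or non-emptiness of internal solution sets in $^*\mathbb R$, and then invoking the Leibniz Transfer Principle (Theorem~\ref{T: Leibniz Transfer Principle}) together with the basic fact that an infinitesimal has absolute value smaller than every positive real. Throughout, the hypothesis that $r$ is a non-trivial cluster point of $X$ serves to guarantee that the right-hand condition is not vacuous: for every real $\delta>0$ the set $\{x\in X: 0<|x-r|<\delta\}$ is non-empty, and transferring this fact while choosing $\delta$ to be a positive infinitesimal produces at least one non-zero infinitesimal $dx$ with $r+dx\in{}^*X$.

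For the forward implication, I would assume $\lim_{x\to r}f(x)=L$ and fix an arbitrary non-zero infinitesimal $dx$ with $r+dx\in{}^*X$. Given any standard $\varepsilon\in\mathbb R_+$, the definition of the limit supplies a real $\delta>0$ for which the system ``$x\in X$, $0<|x-r|<\delta$, $|f(x)-L|\ge\varepsilon$'' has empty solution set in $\mathbb R$. By Theorem~\ref{T: Leibniz Transfer Principle} the corresponding $^*$-system also has empty solution set, i.e. every $x\in{}^*X$ with $0<|x-r|<\delta$ satisfies $|{}^*f(x)-L|<\varepsilon$. Since $|dx|$ is infinitesimal and $\delta$ is a positive real, we have $0<|dx|<\delta$, hence $|{}^*f(r+dx)-L|<\varepsilon$. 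Letting $\varepsilon$ range over all positive reals shows that $^*f(r+dx)-L$ is infinitesimal, that is, $^*f(r+dx)\approx L$.

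For the converse I would argue by contraposition: assume $\lim_{x\to r}f(x)=L$ fails, so there is a standard $\varepsilon_0\in\mathbb R_+$ such that for every real $\delta>0$ the set $B_\delta=:\{x\in X: 0<|x-r|<\delta \text{ and } |f(x)-L|\ge\varepsilon_0\}$ is non-empty. Transferring the statement ``for every $\delta>0$, $B_\delta\neq\varnothing$'' and then instantiating $\delta$ at a positive infinitesimal produces $x\in{}^*X$ with $0<|x-r|<\delta$ and $|{}^*f(x)-L|\ge\varepsilon_0$; setting $dx=:x-r$ gives a non-zero infinitesimal with $r+dx\in{}^*X$ for which $^*f(r+dx)\not\approx L$, contradicting the right-hand side. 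Finally, once $\lim_{x\to r}f(x)=L$ is established, $^*f(r+dx)\approx L$ together with $L\in\mathbb R$ and the uniqueness of the standard part (the Leibniz Completeness Principle, \# 2 of Definition~\ref{D: Completeness of a Totally Ordered Field}) forces $\st(^*f(r+dx))=L$, which is the asserted formula.

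The step I expect to be the main obstacle is the honest transfer of the nested-quantifier statement in the converse: the stated transfer principle is phrased for the solution set of a single finite system, so transferring ``$\forall\delta>0\,\exists x\,(\dots)$'' and then specializing $\delta$ to an infinitesimal must be justified with care. I would handle this either by applying transfer to the two-variable solution set $\{(\delta,x): \delta>0,\ x\in X,\ 0<|x-r|<\delta,\ |f(x)-L|\ge\varepsilon_0\}$ and projecting onto the first coordinate, or---more in the spirit of this paper---by replacing $\delta$ with the reciprocals $1/n$, observing that the internal sets $^*B_{1/n}$ are nested and hence have the finite intersection property, and extracting a common point $x\in\bigcap_n{}^*B_{1/n}$ from the $\aleph_1$-saturation ($\mathfrak c$-saturation) of $^*\mathbb R$; such an $x$ automatically has $x-r$ infinitesimal. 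The forward direction, by contrast, only requires transfer of an emptiness statement for each fixed standard $\varepsilon$ and should be routine.
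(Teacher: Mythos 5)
The paper itself contains no proof of this theorem: it is stated in Example~6 of Section~\ref{S: Examples of Non-Archimedean Fields} as one of several ``important results of non-standard analysis,'' with the missing proofs delegated to the cited introductions (Lindstr\o m, Davis, etc.). So there is no internal proof to compare against; judged on its own, your argument is the standard one and is essentially correct. The forward direction is fine: for each fixed standard $\varepsilon$ the emptiness of the solution set of ``$x\in X$, $0<|x-r|<\delta$, $\varepsilon\le|f(x)-L|$'' transfers directly via Theorem~\ref{T: Leibniz Transfer Principle}; note only that the strict inequalities must be encoded with the relations the transfer principle actually allows ($\le$ and $\neq$, e.g.\ $|x-r|\le\delta$ together with $|x-r|\neq\delta$ and $x\neq r$), and membership $x\in X$ via the domain of $f$ or a characteristic function. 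These are routine but worth saying, since the paper's transfer principle is deliberately restricted to such systems.

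Your diagnosis of the delicate point in the converse is the right one, but of your two repairs only the first is sound relative to the paper's toolkit, and even it needs one more ingredient: from $^*S$ for the two-variable system one cannot read off $\pi_1({}^*S)\supseteq({}^*\mathbb{R})_+$ by transfer alone, because ``projection onto the first coordinate'' is itself an existential statement outside the allowed format. The classical fix is a Skolem function: by the axiom of choice pick $g:\mathbb{R}_+\to X$ with $g(\delta)\in B_\delta$ for every real $\delta>0$; then the one-variable system ``$|g(\delta)-r|\le\delta$, $g(\delta)\neq r$, $\varepsilon_0\le|f(g(\delta))-L|$'' has solution set $\mathbb{R}_+$, transfer gives the starred system solution set $({}^*\mathbb{R})_+$, and evaluating $^*g$ at a positive infinitesimal $\delta$ yields $dx=:{}^*g(\delta)-r$ as required. (In the paper's concrete ultrapower model the same point follows from a diagonal choice $x=\bra x_n\ket$ with $x_n\in B_{\delta_n}$.) Your second repair is \emph{not} licensed by the paper as written: the saturation asserted for $^*\mathbb{R}$ is \emph{algebraic} $\mathfrak{c}$-saturation, i.e.\ the F.I.P.\ property for families of \emph{open intervals} (\#11 of Definition~\ref{D: Completeness of a Totally Ordered Field}), whereas intersecting the internal sets $^*B_{1/n}$ requires countable saturation for internal sets --- true in ultrapowers over $\mathbb{N}$, but a strictly stronger tool than anything the paper establishes. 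Your closing step ($\st({}^*f(r+dx))=L$ by uniqueness of the standard decomposition) and your use of the cluster-point hypothesis to rule out vacuity are both correct; note in passing that the theorem's ``$dx\in\mathbb{R}$'' is a typo for ``$dx\in{}^*\mathbb{R}$,'' which your argument silently and correctly repairs.
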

	
	The \emph{infinitesimal part} of the above characterization was the \emph{Leibniz definition of derivative}. Notice that the above characterization of the concept of \emph{limit} involves \emph{only one quantifier} in sharp contrast with the usual $\varepsilon, \delta$-definition of \emph{limit} in the modern real analysis using three non-commuting quantifiers. On the topic of \emph{counting the quantifiers} we refer to Cavalcante~\cite{cavalcante}.

\item Let $\rho$ be a positive infinitesimal in $^*\mathbb{R}$ (i.e. $0<\rho<1/n$ for all $n\in\mathbb N$). We define the sets of non-standard $\rho$-\emph{moderate} and $\rho$-\emph{negligible} numbers by 
  \begin{eqnarray*}
    \mathcal M_{\rho}(^*\mathbb R)&=&\{x\in{^*\mathbb R} : |x|\le \rho^{-m} \emph{ for some } m\in\mathbb N\},\\
    \mathcal N_{\rho}(^*\mathbb R)&=&\{x\in{^*\mathbb R} : |x| < \rho^n \emph{ for all } n\in\mathbb N\},
  \end{eqnarray*}
  respectively. The \emph{Robinson field of real $\rho$-asymptotic numbers} is the factor ring ${^{\rho}\mathbb R}=:\mathcal M_{\rho}(^*\mathbb R)/\mathcal N_{\rho}(^*\mathbb R)$. We denote by $\widehat{x}\in{^{\rho}\mathbb R}$ the equivalence class of $x\in\mathcal M_{\rho}(^*\mathbb R)$.  As it is not hard to show that $\mathcal M_{\rho}(^*\mathbb R)$ is a convex subring, and $\mathcal N_{\rho}(^*\mathbb R)$ is a maximal convex ideal; thus $^{\rho}\mathbb R                                             $ is an ordered field.  We observe that $^{\rho}\mathbb R$ is not algebraically saturated, since the sequence $\{\widehat{\rho}^{\;-n}\}_{n\in\mathbb N}$ is unbounded and increasing in $^\rho\mathbb R$. Consequently, ${\rm cof}(^\rho\mathbb R)=\aleph_0$ and $^\rho\mathbb R$ is Cauchy (and sequentially) complete. The field $^{\rho}\mathbb R$ was introduced by A. Robinson in (Robinson~\cite{aRob73}) and in (Lightstone \& Robinson~\cite{LiRob}). The proof that $^\rho\mathbb{R}$ is real-closed and Cantor complete can be found in (Todorov \& Vernaeve~\cite{TodVern08}, Theorem 7.3 and Theorem 10.2, respectively). The field $^{\rho}\mathbb R$ is also known as \emph{Robinson's valuation field}, because the mapping $v_\rho:{^{\rho}\mathbb R}\to\mathbb R\cup\{\infty\}$ defined by $v_\rho(\widehat{x})=\st(\log_\rho(|x|))$ if $\widehat{x}\not=0$, and $v_\rho(0)=\infty$, is a non-Archimedean valuation. $^{\rho}\mathbb R$ is also spherically complete (Luxemburg~\cite{wLux76}). We sometimes refer to the branch of mathematics related directly or indirectly to Robinson's field $^\rho\mathbb R$  as \emph{non-standard asymptotic analysis} (see the introduction in Todorov \& Vernaeve~\cite{TodVern08}).

\item By a result due to Robinson~\cite{aRob73} the filed $\mathbb R\langle t^{\mathbb R}\rangle$ can be embedded as an ordered subfield of ${^\rho\mathbb R}$, where the image of $t$ is $\widehat{\rho}$. We shall write this embedding as an inclusion, $\mathbb R\langle t^{\mathbb R}\rangle\subset{^\rho\mathbb R}$. The latter implies the chain of inclusions (embeddings):            
\begin{equation}\label{E: Chain 2}
\mathbb R\subset \mathbb R(t)\subset \mathbb R(t^{\mathbb Z})\subset \mathbb R\langle t^{\mathbb R}\rangle\subset{^\rho\mathbb R}.
\end{equation}
These embeddings explain the name \emph{asymptotic numbers} for the elements of $^\rho\mathbb R$. Recently it was shown that the fields $^*\mathbb R((t^{\mathbb R}))$ and ${^\rho\mathbb R}$ are ordered filed isomorphic  (Todorov \& Wolf~\cite{TodWolf04} ). Since $\mathbb R((t^{\mathbb R}))\subset{^*\mathbb R}((t^{\mathbb R}))$, the chain  (\ref{E: Chain 2}) implies two more chains:
\begin{align}
&\mathbb R\subset \mathbb R(t)\subset \mathbb R(t^{\mathbb Z})\subset \mathbb R\langle t^{\mathbb R}\rangle\subset {\mathbb R((t^{\mathbb R}))}\subset{^\rho\mathbb R}, \label{E: Chain 3}\\
\label{E: Chain 4}
&{^*\mathbb R}\subset {^*\mathbb R(t)}\subset{^*\mathbb R(t^{\mathbb Z})}\subset {^*\mathbb R}\langle t^{\mathbb R}\rangle\subset {^\rho\mathbb R}.
\end{align} 
\end{ex-enum}


\section{The Purge of Infinitesimals from Mathematics}\label{S: The Purge of Infinitesimals from Mathematics}

In this section we offer a short survey on the history of infinitesimal calculus written in a \emph{polemic-like style}. The purpose is to refresh the memory of the readers on one hand, and to prepare them for the next section on the other, where we shall claim \emph{the main point of our article}. For a more detailed exposition on the subject we refer to the recent article by Borovik \& Katz~\cite{BorovikKatz}, where the reader will find more references on the subject. 

\begin{itemize}
\item The Infinitesimal calculus was founded as a mathematical discipline by Leibniz and Newton, but the origin of \emph{infinitesimals} can be traced back to Cavalieri, Pascal, Fermat, L'Hopital and even to Archimedes. The development of calculus culminated in Euler's mathematical inventions. Perhaps Cauchy was the last -- among the great mathematicians -- who still taught calculus (in \'{E}cole) and did research in terms of infinitesimals. We shall refer to this period of analysis as the \emph{Leibniz-Euler Infinitesimal Calculus} for short. 

\item There has hardly ever been a more fruitful and exciting period in mathematics than during the time the Leibniz-Euler infinitesimal calculus was developed. New important results were pouring down from every area of science to which the new method of infinitesimals had been applied -- integration theory, ordinary and partial differential equations, geometry, harmonic analysis, special functions, mechanics, astronomy and physics in general. The mathematicians were highly respected in the science community for having ``in their pockets'' a new powerful method for analyzing everything ``which is changing.'' We might safely characterize the  \emph{Leibniz-Euler Infinitesimal Calculus} as the ``golden age of mathematics.'' We should note that all  of the mathematical achievements of infinitesimal calculus have survived up to modern times. Furthermore, the infinitesimal calculus has never encountered logical paradoxes -- such as Russell's paradox in set theory.

\item Regardless of the brilliant success and the lack of (detected) logical paradoxes, doubts about the philosophical and mathematical legitimacy of the foundation of infinitesimal calculus started from the very beginning. The main reason for worry was one of the principles (axioms) -- now called the Leibniz principle -- which claims that there exists a non-Archimedean totally ordered field with very special properties (a non-standard extension of an Archimedean field -- in modern terminology). This principle is not intuitively believable, especially if compared with the axioms of Euclidean geometry. After all, it is much easier to imagine ``points, lines and planes'' around us, rather than to believe that such things like an ``infinitesimal amount of wine'' or ``infinitesimal annual income'' might possibly have counterparts in the real world. The mathematicians of the $17^\textrm{th}$ and $18^\textrm{th}$ centuries  hardly had any experience with non-Archimedean fields -- even the simplest such field $\mathbb Q(x)$ was never seriously considered as a ``field extension'' (in modern terms) of $\mathbb Q$. 

\item  Looking back with the eyes of modern mathematicians, we can now see that the Leibniz-Euler calculus was actually quite rigorous -- at least much more rigorous than perceived by many modern mathematicians today and certainly by Weierstrass, Bolzano and Dedekind, who started the \emph{reformation of calculus} in the second part of the $19^\textrm{th}$ century. All axioms (or principles) of the infinitesimal calculus were correctly chosen and eventually survived quite well the test of the modern non-standard analysis invented by A. Robinson in the 1960's. What was missing at the beginning of the $19^\textrm{th}$ century to complete this theory was a proof of the consistency of its axioms; such a proof requires -- from modern point of view -- only two more things: Zorn's lemma (or equivalently, the axiom of choice) and a construction of a complete totally ordered field from the rationals. 

\item Weierstrass, Bolzano and Dedekind, along with many others, started the \emph{reformation of calculus} by expelling the infinitesimals and replacing them by the concept of the \emph{limit}. Of course, the newly created \emph{real analysis} also requires Zorn's lemma, or the equivalent axiom of choice, but the $19^\textrm{th}$ century mathematicians did not worry about such ``minor details,'' because most of them (with the possible exception of Zermelo) perhaps did not realize that \emph{real analysis} cannot possibly survive without the axiom of choice. The status of Zorn's lemma and the  axiom of choice were clarified a hundred years later by P. Cohen, K. G\"{o}del and others. Dedekind however (along with many others) constructed an example of a complete field, later called the \emph{field of Dedekind cuts}, and thus proved the consistency of the axioms of the real numbers. This was an important step ahead compared to the infinitesimal calculus.

	\item The \emph{purge of the infinitesimals} from calculus and from mathematics in general however came at a very high price (paid nowadays by the modern students in real analysis): the number of quantifiers in the definitions and theorems in the new \emph{real analysis} was increased by at least two additional quantifiers when compared to their counterparts in the \emph{infinitesimal calculus}. For example, the definition of a \emph{limit} or \emph{derivative} of a function in the Leibniz-Euler infinitesimal calculus requires only one quantifier (see Theorem~\ref{T: Leibniz Derivative}). In contrast, there are three non-commuting quantifiers in their counterparts in real analysis. In the middle of the $19^\textrm{th}$ century however, the word ``infinitesimals'' had become synonymous to ``non-rigorous'' and the mathematicians were ready to pay about any price to get rid of them. 

\item Starting from the beginning of the $20^\textrm{th}$ century \emph{infinitesimals} were systematically expelled from mathematics -- both from textbooks and research papers. The name of the whole discipline \emph{infinitesimal calculus} became to sound archaic and was first modified to \emph{differential calculus}, and later to simply \emph{calculus}, perhaps in an attempt to erase even the slightest remanence of the \emph{realm of infinitesimals}. Even in the historical remarks spread in the modern real analysis textbooks, the authors often indulge of a sort of rewriting the history by discussing the history of infinitesimal calculus, but not even mentioning the word ``infinitesimal.'' A contemporary student in mathematics might very well graduate from college without ever hearing anything about infinitesimals. 

\item The \emph{differentials} were also not spared from the purge -- because of their historical connection with infinitesimals. Eventually they were ``saved'' by the differential geometers under a new name: the \emph{total differentials} from infinitesimal calculus were renamed in  modern geometry to \emph{derivatives}. The sociologists of science might take note: it is not unusual in \emph{politics} or other more ideological fields to ``save'' a concept or idea by simply ``renaming it,'' but in mathematics this happens very, very rarely. The last standing reminders of the ``once proud infinitesimals'' in the modern mathematics are perhaps the ``symbols'' $dx$ and $dy$ in the Leibniz notation $dy/dx$ for derivative and in the integral $\int f(x)\, dx$, whose resilience  turned out to be without precedent in mathematics. An innocent and confused student in a modern calculus course, however, might ponder for hours over the question what the deep meaning (if any) of the word ``symbol" is.

\item In the 1960's,  A. Robinson invented non-standard analysis and Leibniz-Euler infinitesimal calculus was completely and totally rehabilitated. The attacks against the infinitesimals finally ceased, but the straightforward hatred toward them remains -- although rarely  expressed openly anymore. (We have reason to believe that the second most hated notion in mathematics after ``infinitesimals'' is perhaps ``asymptotic series,'' but this is a story for another time.) In the minds of many, however, there still remains the lingering suspicion that non-standard analysis is a sort of ``trickery of overly educated logicians'' who -- for lack of anything else to do -- ``only muddy the otherwise crystal-clear waters of modern real analysis.'' 

\item Summarizing the above historical remarks,  our overall impression is -- said figuratively -- that most modern mathematicians perhaps feel much more grateful to Weierstrass, Bolzano and Dedekind, than to Leibniz and Euler. And many of them perhaps would feel much happier now if the non-standard analysis had never been invented.
\end{itemize}
\section{How Rigorous Was the Leibniz-Euler Calculus}\label{S: How Rigorous Was the Leibniz-Euler Calculus}

 The Leibniz-Euler infinitesimal calculus was based on the existence of two totally ordered fields -- let us denote them by $\mathbb L$ and $^*\mathbb L$. We shall call $\mathbb L$ the \emph{Leibniz field} and $^*\mathbb L$ its \emph{Leibniz extension}. The identification of these fields has been a question of debate up to present times. What is known with certainty is the following: (a) $\mathbb L$ is an Archimedean field and $^*\mathbb L$ is a non-Archimedean field (in the modern terminology); (b)  ${^*\mathbb L}$ is a proper order field extension of $\mathbb L$; (c) $\mathbb L$ is Leibniz complete (see Axiom 1 below); (d) $\mathbb L$ and $^*\mathbb L$ satisfy the \emph{Leibniz Transfer Principle} (Axiom 2 below). 

\begin{remark}[About the Notation] The set-notation we just used to describe the infinitesimal calculus -- such as $\mathbb L, {^*\mathbb L}$, as well as $\mathbb N, \mathbb Q, \mathbb R$ ,etc. -- were never used in the $18^\textrm{th}$ century, nor for most of the $19^\textrm{th}$ century. Instead, the elements of $\mathbb L$ were described verbally as the ``usual quantities'' in contrast to the elements of $^*\mathbb L$  which were described in terms of infinitesimals: $dx, dy, 5+dx$, etc.. In spite of that, we shall continue to use the usual set-notation to facilitate the discussion. 
\end{remark}

	One of the purposes of this article is to try to convince the reader that the above assumptions for $\mathbb L$ and $^*\mathbb L$ imply that $\mathbb L$ is a complete field and thus isomorphic to the field of reals, $\mathbb R$. That means that the Leibniz-Euler infinitesimal calculus was already a rigorous branch of mathematics -- at least much more rigorous than many contemporary mathematicians prefer to believe. Our conclusion is that the amazing success of the infinitesimal calculus in science was possible, we argue, \textbf{not in spite of lack of rigor, but because of the high mathematical rigor} already embedded in the formalism. 

	All of this is in sharp contrast to the prevailing perception among many contemporary mathematicians that the Leibniz-Euler infinitesimal calculus was a non-rigorous branch of mathematics. Perhaps, this perception is due to the wrong impression which most modern calculus textbooks create. Here are several popular myths about the level of mathematical rigor of the infinitesimal calculus. 

\begin{myth} 
   Leibniz-Euler calculus was non-rigorous, because it was based  on the concept of \emph{non-zero infinitesimals}, rather than on \emph{limits}. The concept of \emph{non-zero infinitesimal} is perhaps, ``appealing for the intuition,'' but it is certainly mathematically non-rigorous. ``There is no such thing as a non-zero infinitesimal.'' The \emph{infinitesimals} should be expelled from mathematics once and for all, or perhaps, left only to the applied mathematicians and physicists to facilitate their intuition.
\end{myth}

\begin{fact}
   Yes, the Archimedean fields do not contain non-zero infinitesimals. In particular, $\mathbb R$ does not have non-zero infinitesimals. But in mathematics there are also non-Archimedean ordered fields and each such field contains infinitely many non-zero infinitesimals.  The simplest example of a non-Archimedean field is, perhaps, the field $\mathbb R(t)$ of rational functions with real coefficients supplied with ordering as in Example~\ref{Ex: Field of Rational Functions} in this article. Actually, every totally ordered field which contains a proper copy of $\mathbb R$ is non-Archimedean (see Section~\ref{S: Examples of Non-Archimedean Fields}). Blaming the \emph{non-zero infinitesimals} for the lack of rigor is \emph{nothing other than mathematical ignorance}! 
\end{fact}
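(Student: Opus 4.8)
The statement is a polemical \emph{Fact}, but it bundles together several genuine mathematical assertions, and the plan is to isolate and settle each of them by reduction to results already in hand. The two substantive ones are: (A) every non-Archimedean ordered field contains infinitely many non-zero infinitesimals; and (B) every totally ordered field containing a proper copy of $\mathbb R$ is non-Archimedean. The opening sentences (Archimedean fields, and in particular $\mathbb R$, have no non-zero infinitesimals) are immediate: by the field clause of Theorem~\ref{T: Archimedean Property}, $\mathbb K$ is Archimedean if and only if $\mathcal I(\mathbb K)=\{0\}$, so an Archimedean field has no non-zero infinitesimals, and $\mathbb R$, being Archimedean (indeed Dedekind complete, hence Archimedean by Theorem~\ref{T: Dedekind}), is covered. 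The claim that $\mathbb R(t)$ is the simplest non-Archimedean example is already discharged by Example~\ref{Ex: Field of Rational Functions}, where $t$ is exhibited as a positive infinitesimal, so I would simply cite it.

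For (A), I would first extract a single witness: if $\mathbb K$ is non-Archimedean, then by Theorem~\ref{T: Archimedean Property} there is some $dx\in\mathcal I(\mathbb K)$ with $dx\neq 0$. To manufacture infinitely many from this one, I would use Theorem~\ref{T: Maximal Ideal}, which tells us that $\mathcal I(\mathbb K)$ is an ideal of the ring $\mathcal F(\mathbb K)$. Since each $n=n\cdot 1\in\mathcal F(\mathbb K)$ is finite, the products $n\,dx$ lie in $\mathcal I(\mathbb K)$ for every $n\in\mathbb N$; explicitly $|n\,dx|<1/m$ for all $m$ because $|dx|<1/(nm)$. These are pairwise distinct, for $m\,dx=n\,dx$ forces $(m-n)\,dx=0$ and hence $m=n$ as $\mathbb K$ has characteristic zero and $dx\neq 0$. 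This yields an infinite family $\{\,n\,dx : n\in\mathbb N\,\}$ of non-zero infinitesimals, establishing (A). (The sequence $dx/n$ would serve equally well.)

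The hard part will be (B), and the cleanest route I see routes it through the maximality of $\mathbb R$ rather than through any direct cut-chasing. The plan is to argue by contradiction: suppose $\mathbb K$ is a totally ordered field containing a proper copy of $\mathbb R$ and suppose, for contradiction, that $\mathbb K$ is Archimedean. Then $\mathbb K$ would be a \emph{proper} totally ordered Archimedean field extension of $\mathbb R$. But $\mathbb R$ is Dedekind complete, and by Theorem~\ref{T: Completeness of an Archimedean Field} (the equivalence of Dedekind completeness $(ix)$ with Hilbert completeness $(x)$) it is therefore Hilbert complete, i.e. a \emph{maximal} Archimedean field admitting no proper totally ordered Archimedean extension. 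This directly contradicts the existence of $\mathbb K$, so $\mathbb K$ must be non-Archimedean. The one subtlety to watch is the meaning of ``proper copy of $\mathbb R$'': it should be read as an order-field embedding of a Dedekind complete field whose image is a proper subfield, and Corollary~\ref{C: ded order iso} guarantees all such copies are interchangeable, so the appeal to Hilbert completeness is unambiguous. Assembling (A), (B), and the two opening reductions then proves every mathematical assertion contained in the \emph{Fact}.
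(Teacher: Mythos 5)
Your proposal is correct, and it actually supplies more argument than the paper does: this \emph{Fact} sits in the polemical Myth/Fact section and is justified there only by citation, not by proof -- the absence of non-zero infinitesimals in Archimedean fields is Theorem~\ref{T: Archimedean Property} (field case), the example $\mathbb R(t)$ is discharged by Example~\ref{Ex: Field of Rational Functions}, and the assertion that every field containing a proper copy of $\mathbb R$ is non-Archimedean is left as a bare pointer to Section~\ref{S: Examples of Non-Archimedean Fields}, which never spells it out. Your two additions close exactly the gaps the paper leaves implicit, and both are consistent with its machinery. For (A), Theorem~\ref{T: Archimedean Property} yields one non-zero $dx\in\mathcal I(\mathbb K)$, and your family $\{n\,dx : n\in\mathbb N\}$ stays infinitesimal because $\mathcal I(\mathbb K)$ is an ideal of $\mathcal F(\mathbb K)$ (Theorem~\ref{T: Maximal Ideal}) -- or by your direct estimate $|n\,dx|<1/m$ from $|dx|<1/(nm)$ -- with distinctness from characteristic zero; the even simpler family $\{dx/n\}$ works too, as you note. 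For (B), your contradiction via Hilbert completeness is surely the intended route: $\mathbb R$ is Dedekind complete, hence by $(ix)\Rightarrow(x)$ of Theorem~\ref{T: Completeness of an Archimedean Field} it admits no proper totally ordered Archimedean field extension, so an Archimedean $\mathbb K$ properly containing a copy of $\mathbb R$ is impossible. One refinement to your ``subtlety'' remark: you do not need to stipulate that the copy is order-embedded, since any field embedding of $\mathbb R$ into an ordered field is automatically order-preserving (every positive real is a square, and squares are non-negative in any ordered field), so the hypothesis ``contains a proper copy of $\mathbb R$'' is unambiguous even in the purely algebraic reading; your appeal to Corollary~\ref{C: ded order iso} then just confirms that which Dedekind complete field one takes is immaterial.
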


\begin{myth}
   The Leibniz-Euler infinitesimal calculus was non-rigorous because of the lack of the completeness of the field of ``ordinary scalars'' $\mathbb L$.  Perhaps $\mathbb L$ should be identified with the field of rationals $\mathbb Q$, or the field $\mathbb A$ of the real algebraic numbers? Those who believe that the  Leibniz-Euler infinitesimal calculus was based on a non-complete field -- such as $\mathbb Q$ or $\mathbb A$ -- must face a very confusing mathematical and philosophical question: How, for God's sake, such a non-rigorous and naive framework as the field of rational numbers $\mathbb Q$ could support one of the most successful developments in the history of mathematics and science in general? Perhaps mathematical rigor is irrelevant to the success of mathematics? Or, even worst, mathematical rigor should be treated as an ``obstacle'' or ``barrier'' in the way of the success of science. This point of view is actually pretty common among applied mathematicians and theoretical physicists. We can only hope that those who teach real analysis courses nowadays do not advocate these values in class.
\end{myth}

\begin{fact}
   The answer to the question ``was the Leibniz field $\mathbb L$ complete'' depends on whether or not the Leibniz extension $^*\mathbb L$ can be viewed as a ``non-standard extension'' of $\mathbb L$ in the sense of the modern non-standard analysis. Why? Because our result in Theorem~\ref{T: Completeness of an Archimedean Field} of this article shows that if the Leibniz extension $^*\mathbb L$ of $\mathbb L$ is in fact a non-standard extension of $\mathbb L$, then $\mathbb L$ is a  complete Archimedean field which is thus isomorphic to the field of real numbers $\mathbb R$. On the other hand, there is plenty of evidence that Leibniz and Euler, along with many other mathematicians, had regularly employed the following principle:

   \begin{axiom}[Leibniz Completeness Principle]
      Every finite number in $^*\mathbb L$ is infinitely close to some (necessarily unique) number in $\mathbb L$ (\#2 of Definition~\ref{D: Completeness of a Totally Ordered Field}).
   \end{axiom}
\end{fact}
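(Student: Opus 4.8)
The plan is to read the \emph{Leibniz Completeness Principle} as precisely condition $(ii)$ (Leibniz completeness) of Theorem~\ref{T: Completeness of an Archimedean Field} and then let that theorem carry the analytic weight. First I would record the structural data already in force for the Leibniz pair: $\mathbb{L}$ is Archimedean and $^*\mathbb{L}$ is a non-standard extension obeying the Transfer Principle (Theorem~\ref{T: Leibniz Transfer Principle}). The uniqueness clause asserted in the Axiom is then automatic: if $r+dx=r'+dx'$ with $r,r'\in\mathbb{L}$ and $dx,dx'\in\mathcal{I}(^*\mathbb{L})$, then $r-r'=dx'-dx$ is an infinitesimal lying in $\mathbb{L}$, and since $\mathbb{L}$ is Archimedean we have $\mathcal{I}(\mathbb{L})=\{0\}$ by Theorem~\ref{T: Archimedean Property}, forcing $r=r'$. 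Thus the Principle is exactly the existence-and-uniqueness statement $(ii)$.

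With the Axiom identified as statement $(ii)$, Theorem~\ref{T: Completeness of an Archimedean Field} supplies the implication $(ii)\Rightarrow(ix)$, so $\mathbb{L}$ is Dedekind complete; Corollary~\ref{C: ded order iso} then yields that $\mathbb{L}$ is order-isomorphic to the unique Dedekind complete field $\mathbb{R}$. This is the short route, and it is legitimate precisely because all the hard analysis has already been packaged inside Theorem~\ref{T: Completeness of an Archimedean Field}.

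For a self-contained argument I would instead derive Dedekind completeness directly from the two Leibniz axioms by a hyperfinite-partition construction of suprema. Given a nonempty $A\subseteq\mathbb{L}$ bounded above, fix $a\in A$ and an upper bound $b\in\mathbb{L}$, choose an infinite $N\in{^*\mathbb N}$, and partition $[a,b]$ into $N$ equal parts $x_k=a+k(b-a)/N$. By the Transfer Principle there is a least index $K$ for which $x_K$ is a $^*$-upper bound of $^*A$; since $x_K\in[a,b]$ is finite, the Axiom supplies $s=\st(x_K)\in\mathbb{L}$. That $s$ is an upper bound of $A$ follows because any $y\in A$ satisfies $y\le x_K\approx s$, and $y-s\in\mathbb{L}$ cannot be a positive non-infinitesimal; that $s$ is the \emph{least} upper bound follows from the minimality of $K$, since any upper bound $s'<s$ would (by transfer) be a $^*$-upper bound of $^*A$, forcing $x_{K-1}=x_K-(b-a)/N$---which is infinitesimally below $s$ yet above $s'$---to be a $^*$-upper bound as well, contradicting the choice of $K$.

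The main obstacle is conceptual rather than computational: one must be scrupulous that the historically employed ``Leibniz completeness'' really is the existence-and-uniqueness of standard parts and not some weaker approximation property, and that extracting the grid point $x_K$ genuinely requires the Transfer Principle to locate the least $^*$-upper bound \emph{inside} the internal partition. Once transfer is granted the only analytic input is the Axiom itself, and the final passage to $\mathbb{R}$ is immediate from Corollary~\ref{C: ded order iso}.
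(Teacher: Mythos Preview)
Your first approach is exactly the paper's own justification: the Fact is not given a separate proof in the paper but simply invokes Theorem~\ref{T: Completeness of an Archimedean Field} (the equivalence $(ii)\Leftrightarrow(ix)$) together with Corollary~\ref{C: ded order iso}, precisely as you do. Your remark that uniqueness is automatic from the Archimedean hypothesis via Theorem~\ref{T: Archimedean Property} is correct and makes explicit something the paper leaves implicit in the parenthetical ``necessarily unique''.

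Your second, self-contained argument via a hyperfinite partition is a genuinely different route. In the paper the passage $(ii)\Rightarrow(ix)$ is obtained only indirectly, through the long chain $(ii)\Rightarrow(iii)\Rightarrow(iv)\Rightarrow\cdots\Rightarrow(ix)$ in the proof of Theorem~\ref{T: Completeness of an Archimedean Field}, using Heine--Borel, monotone, Cantor, Weierstrass, Bolzano and Cauchy completeness as intermediate stops. Your hyperfinite-grid construction of $\sup A$ (essentially Keisler's argument) bypasses all of that and uses only Transfer plus the standard-part map furnished by the Axiom; it is shorter and keeps the non-standard machinery in the foreground, which suits the historical point of Section~\ref{S: How Rigorous Was the Leibniz-Euler Calculus} rather well. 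The paper's chain, by contrast, buys the full equivalence of ten completeness notions at once. One small wording fix in your least-upper-bound step: you should argue that $x_{K-1}\approx s$ and $s-s'\in\mathbb L_+$ is non-infinitesimal, hence $x_{K-1}>s'$, and since $s'$ is (by transfer) a $^*$-upper bound of $^*A$, so is $x_{K-1}$, contradicting minimality of $K$; as written, the phrase ``above $s'$'' is asserted rather than derived.
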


\begin{remark} The above property of $^*\mathbb L$ was treated by Leibniz and the others as an ``obvious truth.'' More likely, the $18^\textrm{th}$ century mathematicians were unable to imagine a counter-example to the above statement. The results of non-standard analysis produce such a counter-example: there exist finite numbers in $^*\mathbb Q$ which are \emph{not} infinitely close to any number in $\mathbb Q$.
\end{remark}

\begin{myth}
   The theory of non-standard analysis is an invention of the $20^\textrm{th}$ century and has nothing to do with the Leibniz-Euler infinitesimal calculus. We should not try to rewrite the history and project backwards the achievements of modern mathematics. The proponents of this point of view also emphasize the following features of non-standard analysis:
   \begin{enumerate}
   \item[(a)] A. Robinson's original version of non-standard analysis was based of the so-called \emph{Compactness Theorem} from model theory: If a set $S$ of sentences has the property that every finite subset of $S$ is consistent (has a model), then the whole set $S$ is consistent (has a model). 

   \item[(b)] The ultrapower construction of the non-standard extension $^*\mathbb K$ of a field $\mathbb K$ used in Example 5, Section \ref{S: Examples of Non-Archimedean Fields}, is based on the existence of a free ultrafilter. Nowadays we prove the existence of such a filter with the help of Zorn's lemma.  Actually the statement that \emph{for every infinite set $I$ there exists a free ultrafilter on $I$} is known in modern set theory  as the \emph{free filter axiom} (an axiom which is weaker than the axiom of choice).
   \end{enumerate} 
\end{myth}

\begin{fact}
   We completely and totally agree with both (a) and (b) above. Neither the \emph{completeness theorem} from model theory, nor the \emph{free filter axiom} can be recognized  in any shape or form in the Leibniz-Euler infinitesimal calculus.  These inventions belong to the late $19^\textrm{th}$ and the first half of $20^\textrm{th}$ century. Perhaps surprisingly for many of us, however, J. Keisler~\cite{jKeislerF} invented a simplified version of non-standard analysis -- general enough to support calculus -- which does not rely on either model theory or formal mathematical logic. It presents the definition of $^*\mathbb R$ axiomatically in terms of a particular extension of all functions from $\mathbb L$ to $^*\mathbb L$ satisfying the so-called \emph{Leibniz Transfer Principle}:

   \begin{axiom}[Leibniz Transfer Principle]\label{A: Leibniz Transfer Principle} 
      For every $d\in\mathbb N$ and for every set $S\subset\mathbb L^d$ there exists a unique set $^*S\subset{^*\mathbb L^d}$ such that:
      \begin{enumerate}
      \item[{\bf (a)}] $^*S \cap \mathbb L^d=S$.
      \item[{\bf (b)}] If $f\subset \mathbb L^p\times \mathbb L^q$ is a function, then $^*f\subset{^*\mathbb L^p}\times{^*\mathbb L^q}$ is also a function. 
      \item[{\bf (c)}] $\mathbb L$ satisfies Theorem~\ref{T: Leibniz Transfer Principle} for $\mathbb K=\mathbb L$. 
      \end{enumerate}
   \end{axiom}

   We shall call $^*S$ a \emph{non-standard extension} of $S$ borrowing the terminology from Example 5, Section~\ref{S: Examples of Non-Archimedean Fields}. 
\end{fact}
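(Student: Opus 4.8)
The plan is to validate the Leibniz Transfer Principle by exhibiting a concrete model that realizes it, namely the ultrapower of Example~5. Fix a free ultrafilter $\mathcal{U}$ on $\mathbb{N}$, set $^*\mathbb{L}=\mathbb{L}^{\mathbb{N}}/\mathcal{U}$, and for each $S\subseteq\mathbb{L}^d$ define $^*S=\{\langle a_n\rangle : \{n\in\mathbb{N} : a_n\in S\}\in\mathcal{U}\}$, reading the $a_n$ as elements of $\mathbb{L}^d$. This membership condition is independent of the representative, since $\langle a_n\rangle=\langle b_n\rangle$ forces $\{n : a_n=b_n\}\in\mathcal{U}$, and intersecting with $\{n : a_n\in S\}$ shows the two index sets belong to $\mathcal{U}$ together. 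With $^*S$ defined by this explicit formula, existence is immediate and the whole statement reduces to verifying properties (a), (b) and (c); uniqueness inside the model is automatic, while the uniqueness asserted in Keisler's abstract framework is precisely the claim that any operation obeying (a)--(c) must coincide with the ultrapower one.

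For property (a) I note that an element of $\mathbb{L}^d\subseteq{^*\mathbb{L}^d}$ is represented by a constant sequence $\langle r\rangle$ with $r\in\mathbb{L}^d$, so $\langle r\rangle\in{^*S}$ iff $\{n : r\in S\}\in\mathcal{U}$; this index set is $\mathbb{N}$ when $r\in S$ and $\varnothing$ otherwise, and since $\mathbb{N}\in\mathcal{U}$ and $\varnothing\notin\mathcal{U}$ we conclude $^*S\cap\mathbb{L}^d=S$. For property (b), given a function $f\subseteq\mathbb{L}^p\times\mathbb{L}^q$ I define $^*f(\langle x_n\rangle)=\langle f(x_n)\rangle$ on $^*(\dom f)$; this is well defined because $\{n : x_n=x'_n\}\subseteq\{n : f(x_n)=f(x'_n)\}$, and it is single valued and total by the same kind of filter computation, so $^*f$ is a genuine function from $^*\mathbb{L}^p$ to $^*\mathbb{L}^q$.

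The heart of the matter is property (c), which is exactly Theorem~\ref{T: Leibniz Transfer Principle} for $\mathbb{K}=\mathbb{L}$ and may therefore be assumed; since the excerpt states it without proof, I record how I would prove it, as the quantifier-free case of the theorem of {\L}o{\'s}. For a fixed $\langle x_n\rangle\in{^*\mathbb{L}^p}$ one checks the atomic equivalences $^*f_i(x)={^*}F_i(x)\iff\{n : f_i(x_n)=F_i(x_n)\}\in\mathcal{U}$ and $^*h_k(x)\le{^*}H_k(x)\iff\{n : h_k(x_n)\le H_k(x_n)\}\in\mathcal{U}$ directly from the definitions of equality and order on the ultrapower, while $^*g_j(x)\neq{^*}G_j(x)\iff\{n : g_j(x_n)\neq G_j(x_n)\}\in\mathcal{U}$ uses that $\mathcal{U}$ is an \emph{ultra}filter, i.e. property (e) of the Remark on free ultrafilters, namely that exactly one of a set and its complement lies in $\mathcal{U}$. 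As the system has only finitely many lines, closure of $\mathcal{U}$ under finite intersection lets me combine these: $\langle x_n\rangle$ solves the starred system iff the finite intersection of the corresponding index sets lies in $\mathcal{U}$, iff $\{n : x_n\text{ solves the system in }\mathbb{L}\}\in\mathcal{U}$, iff $\langle x_n\rangle\in{^*S}$. This identifies $^*S$ as the solution set of the starred system and conversely.

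The main obstacle is entirely concentrated in property (c): one must verify the atomic equivalences representative-independently and note that the disequality line is the only place where the full strength of an ultrafilter, rather than a mere filter, is needed. It is worth stressing that no Archimedean or real-closedness hypothesis enters the transfer itself; such hypotheses are used elsewhere only to force $\mathbb{L}\cong\mathbb{R}$. Finally, the external universal quantifier over $d,p,q$ and over the functions involved costs nothing, since the construction of $^*S$ and the verifications above are uniform in all of this data.
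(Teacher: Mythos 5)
You should first be clear about what the paper does with this statement: it is a \emph{Fact} in the polemical Section~\ref{S: How Rigorous Was the Leibniz-Euler Calculus}, and the paper offers no proof of it at all. Its mathematical core --- that Axiom~\ref{A: Leibniz Transfer Principle} is consistent, indeed a theorem of modern non-standard analysis --- is delegated to a citation (Keisler~\cite{jKeislerF}, pp.~42) and, implicitly, to Example~5 of Section~\ref{S: Examples of Non-Archimedean Fields}, where ${^*S}$ and ${^*f}$ are defined by exactly your formulas and Theorem~\ref{T: Leibniz Transfer Principle} is stated without proof, with the reader sent to Lindstr\o m, Davis, et al. Your proposal therefore does not so much diverge from the paper as fill in what it leaves to the references: you exhibit the ultrapower $\mathbb{L}^{\mathbb{N}}/\mathcal{U}$ as a model, verify representative-independence and properties (a) and (b), and prove (c) as the quantifier-free case of the theorem of {\L}o{\'s}, correctly isolating that the disequality lines are the only place where the \emph{ultra} property (e) of $\mathcal{U}$ is needed beyond mere filter properties, and that closure under finite intersections handles the finitely many lines of the system. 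This is the standard and correct argument, and it buys something the paper's citation does not: it makes self-contained the Fact's own point (b), namely that the sole non-constructive ingredient is the free ultrafilter (free filter axiom), with no model theory or formal logic anywhere in sight.

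One genuine overstatement to fix: your claim that the uniqueness clause of the Axiom ``is precisely the claim that any operation obeying (a)--(c) must coincide with the ultrapower one.'' That coincidence is false in general. With ${^*\mathbb{L}}$ fixed as the ultrapower, composing the star map with a nontrivial order-automorphism $\varphi$ of ${^*\mathbb{L}}$ fixing $\mathbb{L}$ pointwise (such automorphisms exist, e.g.\ under the GCH the paper adopts in Section~\ref{S: Examples of Non-Archimedean Fields}, where ${^*\mathbb{R}}$ is saturated) yields a second operation $S\mapsto\varphi[{^*S}]$ still satisfying (a)--(c); and different ultrafilters give different extensions anyway --- the paper itself concedes that Axioms~1 and~2 do not determine ${^*\mathbb{L}}$ up to isomorphism. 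The uniqueness in Keisler's framework is a stipulation that the star map is a single-valued piece of primitive data, not a derivable theorem, so you should weaken that sentence; your consistency proof, which realizes the axiom by defining the star map outright, is unaffected.
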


\begin{examples} Here are two (typical) examples for the application of the Leibniz transfer principle:
   \begin{ex-enum}
   \item The identity $\sin(x+y)={\sin x}\, {\cos y}+{\cos x}\, {\sin y}$ holds for all $x, y\in{\mathbb L}$ if and only if this  identity holds for all $x, y\in{^*\mathbb L}$ (where the asterisks in front of the sine and cosine are skipped for simplicity). 
   \item The identity $\ln(xy)=\ln x +\ln y$ holds for all $x, y\in{\mathbb L_+}$ if and only if the same  identity holds for all $x, y\in{^*\mathbb L_+}$. Here $\mathbb L_+$ is the set of the positive elements of $\mathbb L$ and $^*\mathbb L_+$ is its non-standard extension  (where again, the asterisks in front of the functions are skipped for simplicity). 
   \end{ex-enum}
\end{examples} 

 Leibniz never formulated his principle exactly in the form presented above. For one thing, the set-notation such as $\mathbb N$, $\mathbb Q$, $\mathbb R$, $\mathbb L$, etc. were not in use in the $18^\textrm{th}$ century. The name ``Leibniz Principle'' is often used in the modern literature (see Keisler~\cite{jKeislerF}, pp. 42 or Stroyan \& Luxemburg~\cite{StroyanLux76}, pp. 28), because Leibniz suggested that the field of the usual numbers ($\mathbb L$ or $\mathbb R$ here) should be extended to a larger system of numbers ($^*\mathbb L$ or $^*\mathbb R$), which has the same properties, but contains infinitesimals. Both Axiom 1 and Axiom 2 however, were in active use in Leibniz-Euler infinitesimal calculus.  Their implementation does not require an ellaborate set theory or formal logic; \emph{what is a solution of a system of equations and inequalities} was perfectly clear to mathematicians long before of the times of Leibniz and Euler. Both Axiom 1 and Axiom 2 are theorems in modern non-standard analysis (Keisler~\cite{jKeislerF}, pp. 42). However, if Axiom 1 and Axiom 2 are understood as \emph{proper axioms}, they characterize the field $\mathbb L$ uniquely (up to a field isomorphism) as a complete Archimedean field (thus a field isomorphic to $\mathbb R$). Also, these axioms characterize $^*\mathbb L$ as a \emph{non-standard extension} of $\mathbb L$. True, these two axioms do not determine $^*\mathbb L$ uniquely (up to a field isomorphism) unless we borrow from the modern set theory such tools as \emph{cardinality}. For the rigorousness of the infinitesimal this does not matter. Here is an example how the formula $(\sin x)^\prime=\cos x$ was derived in the infinitesimal calculus: suppose that $x\in \mathbb L$. Then for every non-zero infinitesimal $dx\in{^*\mathbb L}$ we have 
\begin{align}
&\frac{\sin(x+dx)-\sin x}{dx}= \frac{{\sin x}\, {\cos (dx)}+{\cos x}\, {\sin (dx)}-\sin x}{dx}=\notag\\
&=\sin x\, \frac{\cos(dx)-1}{dx}+\cos x\, \frac{\sin(dx)}{dx}\approx \cos x,\notag
\end{align}
because $\sin x, \cos x\in\mathbb L$, $\frac{\cos(dx)-1}{dx}\approx 0$ and $\frac{\sin(dx)}{dx}\approx 1$. Here $\approx$ stands for the infinitesimal relation on $^*\mathbb L$, i.e. $x\approx y$ if $x-y$ is infinitesimal (Definition~\ref{D: Infinitesimals, etc.}). Thus $(\sin x)^\prime=\cos x$ by the Leibniz definition of derivative (Theorem~\ref{T: Leibniz Derivative}). For those who are interested in teaching calculus through infinitesimals we refer to the calculus textbook Keisler~\cite{jKeislerE} and its more advanced companion Keisler~\cite{jKeislerF} (both available on internet). On a method of teaching limits through infinitesimals we refer to Todorov~\cite{tdTod2000a}.

\begin{myth}
   Trigonometry and the theory of algebraic and transcendental elementary functions such as $\sin x, e^x$, etc. was not rigorous in the infinitesimal calculus. After all, the theory of analytic functions was not developed until late in $19^\textrm{th}$ century.   
\end{myth}

\begin{fact}
   Again, we argue that the rigor of trigonometry and the elementary functions was relatively high and certainly much higher than in most of the contemporary trigonometry and calculus textbooks. In particular, $y=\sin x$ was defined by first, defining $\sin^{-1} y=\int_0^x\frac{dy}{\sqrt{1-y^2}}$ on $[-1, 1]$ and geometrically viewed as a particular \emph{arc-lenght on a circle}. Then $\sin x$ on $[-\pi/2, \pi/2]$ is defined as the inverse of $\sin^{-1} y$. If needed, the result can be extended to $\mathbb L$ (or to $\mathbb R$) by periodicity. This answer leads to another question: how was the \emph{concept of arc-lenght} and the integral $\int_a^b f(x)\, dx$ defined in terms of infinitesimals before the advent of Riemann's theory of integration? On this topic we refer the curious reader to Cavalcante \& Todorov~\cite{CavTod08}.
\end{fact}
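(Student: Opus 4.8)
The plan is to read this \emph{Fact} as the conjunction of two claims — a \emph{mathematical} claim, that the arcsine-integral recipe furnishes a genuinely rigorous definition of the sine function without any appeal to power series or the $19^\textrm{th}$-century theory of analytic functions, and a \emph{historical} claim, that this (or an equivalent infinitesimal) procedure is what the Leibniz-Euler tradition actually employed — and to substantiate each in turn. First I would isolate the purely mathematical content, which can be carried out entirely inside a complete Archimedean field (equivalently $\mathbb{R}$, by Theorem~\ref{T: Completeness of an Archimedean Field}) or, if one insists on the infinitesimal formalism, inside $^*\mathbb{L}$ with values returned by the standard-part map $\st$.

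For the mathematical core I would clean up the notation first and set $\arcsin y =: \int_0^y \frac{dt}{\sqrt{1-t^2}}$ for $y\in[-1,1]$, using a dummy variable $t$ to avoid the clash in the displayed formula. The integrand is continuous on $(-1,1)$ and the improper integral converges at each endpoint (the singularity is of order $1/\sqrt{1-t}$, hence integrable), so $\arcsin$ is a well-defined, odd, continuous function. By the fundamental theorem its derivative $1/\sqrt{1-y^2}$ is strictly positive, so $\arcsin$ is strictly increasing; I would then put $\pi/2 =: \arcsin(1)$ and \emph{define} $\sin$ on $[-\pi/2,\pi/2]$ to be the (automatically continuous, strictly increasing) inverse. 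The inverse-function rule yields $(\sin x)' = \cos x$ with $\cos x =: \sqrt{1-\sin^2 x}$, and the addition formula — hence the periodicity needed to extend $\sin$ to all of $\mathbb{L}$ — follows by the usual differential-equation (uniqueness for $y''=-y$) or functional-equation argument.

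The step I expect to be the genuine obstacle is the one the authors themselves flag: the whole construction rests on the prior notions of \emph{integral} and \emph{arc-length}, which before Riemann were not available in their modern guise. Here the infinitesimal formalism is the resolution rather than a defect. I would define $\int_a^b f(x)\,dx$ as the standard part of a hyperfinite sum $\sum f(x_i)\,dx$ over a partition of $[a,b]$ into infinitesimal increments $dx$, and arc-length as $\int \sqrt{1+(dy/dx)^2}\,dx$, i.e. the sum of the infinitesimal hypotenuses $ds=\sqrt{dx^2+dy^2}$. Within the non-standard framework of Section~\ref{S: Examples of Non-Archimedean Fields} these sums are legitimate objects of $^*\mathbb{L}$, and their standard parts reproduce the Riemann integral whenever the latter exists; so the circle of ideas closes without ever invoking a limit in the $\varepsilon$-$\delta$ sense, which is exactly the rigor the \emph{Fact} is asserting.

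Finally, for the historical half of the claim I would not attempt an independent argument but would reduce it to the documentary evidence assembled in Cavalcante \& Todorov~\cite{CavTod08}, observing that the arcsine-first definition is precisely the one implicit in the $18^\textrm{th}$-century treatment of the circle through its arc, so that whatever rigor resides in the infinitesimal integral is inherited verbatim by trigonometry. The one delicate point is consistency between the two halves — one must check that the infinitesimal integral used to \emph{define} $\sin$ coincides with the one historically employed to measure arc-length — and this is exactly the matching that the cited reference is invoked to supply.
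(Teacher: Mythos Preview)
Your proposal is mathematically sound and, in fact, considerably more substantive than what the paper itself offers. In the paper this \emph{Fact} is not accompanied by a proof at all: it sits in the polemical Section~\ref{S: How Rigorous Was the Leibniz-Euler Calculus} as an unargued assertion, and the only justification given is the outward pointer to Cavalcante \& Todorov~\cite{CavTod08}. The paper neither carries out the arcsine construction you sketch (well-definedness of the improper integral, strict monotonicity, the inverse-function rule, derivation of the addition law and periodic extension) nor spells out the hyperfinite-sum definition of $\int_a^b f(x)\,dx$; all of that is delegated to the cited reference.

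So the comparison is simple: the paper's ``proof'' is a citation, while you supply an actual outline. What your approach buys is self-containment --- a reader can see that the construction really does avoid power series and $\varepsilon$-$\delta$ limits --- at the cost of having to commit to details (e.g.\ deriving the addition formula via uniqueness for $y''=-y$) that the paper deliberately leaves off-stage. One small point: your hyperfinite-sum definition of the integral presupposes a saturated extension in which hyperfinite partitions exist, which is available in the $^*\mathbb{L}$ of Section~\ref{S: Examples of Non-Archimedean Fields} but is an ingredient the paper never makes explicit in this context; you are right to flag the integral as the genuine obstacle, and the paper agrees by punting exactly there.
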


\textbf{James Hall} is an undergraduate student  in mathematics at Cal Poly, San Luis Obispo. In the coming Fall of 2011 he will be a senior and will start to apply for graduate study in mathematics. His recent senior project was on \emph{Completeness of Archimedean and non-Archimedean fields and Non-Standard Analysis}.

Mathematics Department, California Polytechnic State University, San Luis Obispo, CA 93407, USA  (james@slohall.com).

\textbf{Todor D. Todorov}  received his Ph.D. in Mathematical Physics from University of Sofia, Bulgaria. He currently teaches mathematics at Cal Poly, San Luis Obispo. His articles are on non-standard analysis, nonlinear theory of generalized functions (J. F. Colombeau's theory), asymptotic analysis, compactifications of ordered topological spaces, and teaching calculus. Presently he works on an axiomatic approach to the nonlinear theory of generalized functions. 

Mathematics Department, California Polytechnic State University, San Luis Obispo, CA 93407, USA  (ttodorov@calpoly.edu).



\end{document}